    \let\usingAmsArtXII\usepackage  
  \def \useHugeSize {}
  \def \numberingIsThrough {}
    \def\atSign{@@}
    \def\mathbb{\Bbb}
    \def\mathfrak{\frak}
    \def\mathbf{\bold}
      \def\boldsymbol#1{{\bold #1}}
    \def\mathbit{\boldsymbol}
    \newenvironment{proof}{%
         \@ifnextchar[{%
                       \expandafter\let\expandafter\end@proof
                         \csname endpf*\endcsname
                         \my@proof
                      }{\let\end@proof\endpf\pf}%
        }{\end@proof}
    \def\my@proof[#1]{\@nameuse{pf*}{#1}}
    \def\xrightarrow[#1]#2{@>{#2}>{#1}>}
    \def\xleftarrow[#1]#2{@<{#2}<{#1}<}
    \def\providecommand#1{\def#1}
    \def\emph#1{{\em #1}}
    \def\textbf#1{{\bf #1}}
    \def\mathring{\overset{\,\,{}_\circ}}
    \let\usingAmsArtXII\usepackage
        \DeclareMathAccent{\mathring}{\mathalpha}{operators}{"17}
      \long\def\FAKEendPROOF{\endtrivlist}
      \def\endproof{\qed\endtrivlist}
        \DeclareMathAlphabet{\mathbit}{OML}{cmm}{b}{it}
      \def\atSign{@}
      \def\Sb#1\endSb{_{\substack{#1}}}
      \def\Sp#1\endSp{^{\substack{#1}}}
                \def\mathcal{\cal}
                \def\pcyr{%
                        \def\default@family{UWCyr}%
                        \let\oldSl@\sl
                        \def\sl{\def\default@shape{it}\oldSl@}%
                        \cyracc
                        \language\Russian\family{UWCyr}\selectfont
                }
                \DeclareFontFamily{OT2}{cmr}{\hyphenchar\font45 }
                \DeclareFontShape{OT2}{cmr}{m}{n}{%
                     <5><6><7><8><9><10>gen*wncyr %
                     <10.95><12><14.4><17.28><20.74><24.88> wncyr10 %
                }{}
                \DeclareFontShape{OT2}{cmr}{m}{it}{%
                     <5><6><7><8><9><10> gen * wncyi%
                     <10.95><12><14.4><17.28><20.74><24.88> wncyi10%
                }{}
                \DeclareFontShape{OT2}{cmr}{bx}{n}{%
                     <5><6><7><8><9><10> gen * wncyb%
                     <10.95><12><14.4><17.28><20.74><24.88> wncyb10%
                }{}
                \DeclareFontShape{OT2}{cmr}{m}{sl}{%
                     <-> ssub * cmr/m/it%
                }{}
                \DeclareFontShape{OT2}{cmr}{m}{sc}{%
                     <5><6><7><8><9><10>%
                     <10.95><12><14.4><17.28><20.74><24.88> wncysc10%
                }{}
                \DeclareFontFamily{OT2}{cmss}{\hyphenchar\font45 }
                \DeclareFontShape{OT2}{cmss}{m}{n}{%
                     <8><9><10> gen * wncyss%
                     <10.95><12><14.4><17.28><20.74><24.88> wncyss10%
                }{}
                \def\cyrencodingdefault{OT2}
                \def\pcyr{%
                        \cyracc
                        \let\encodingdefault\cyrencodingdefault
                        \language\Russian\fontencoding{OT2}\selectfont
                }
        \def\theorembodyfont#1{\relax}
          \let\@@th@plain\th@plain
          \def\th@plain{ \@@th@plain \slshape }
        \let\normalshape\relax
     \def\cprime{$'$}
  \def\@sect@my#1#2#3#4#5#6[#7]#8{%
\ifnum #2>\c@secnumdepth
   \let\@svsec\@empty
 \else
   \refstepcounter{#1}%
\edef\@svsec{\ifnum#2<\@m
             \@ifundefined{#1name}{}{\csname #1name\endcsname\ }\fi
\noexpand\rom{\csname the#1\endcsname.}\enspace}\fi
 \@tempskipa #5\relax
 \ifdim \@tempskipa>\z@ 
   \begingroup #6\relax
   \@hangfrom{\hskip #3\relax\@svsec}{\interlinepenalty\@M #8\par}%
   \endgroup
   \if@article\else\csname #1mark\endcsname{%
        \ifnum \c@secnumdepth >#2\relax\csname the#1\endcsname. \fi#7}\fi
\ifnum#2>\@m \else
       \let\@tempf\\ \def\\{\protect\\}\addcontentsline{toc}{#1}%
{\ifnum #2>\c@secnumdepth \else
             \protect\numberline{%
               \ifnum#2<\@m
               \@ifundefined{#1name}{}{\csname #1name\endcsname\ }\fi
               \csname the#1\endcsname.}\fi
           #8}\let\\\@tempf
     \fi
 \else
  \def\@svsechd{#6\hskip #3\@svsec
    \@ifnotempty{#8}{\ignorespaces#8\unskip
       \ifnum\spacefactor<1001.\fi}%
        \ifnum#2>\@m \else
          \let\@tempf\\ \def\\{\protect\\}\addcontentsline{toc}{#1}%
            {\ifnum #2>\c@secnumdepth \else
              \protect\numberline{%
                \ifnum#2<\@m
                \@ifundefined{#1name}{}{\csname #1name\endcsname\ }\fi
                \csname the#1\endcsname.}\fi
             #8}\let\\\@tempf\fi}%
 \fi
\@xsect{#5}}
  \let\@sect\@sect@my             
  \def\th@remark@my{\theorempreskipamount6\p@\@plus6\p@
    \theorempostskipamount\theorempreskipamount
    \def\theorem@headerfont{\it}\normalshape}
    \let\th@remark\th@remark@my
    \let\o@@remark\th@remark
      \def\th@remark{\o@@remark
    \ifdim\theorempostskipamount < 2pt\relax
      \theorempostskipamount\theorempreskipamount
         \multiply\theorempostskipamount\tw@
         \divide\theorempostskipamount\thr@@
    \fi
      }
\let\myLabel\@gobble
\def\labelsONmargin{\@mparswitchfalse\def\myLabel##1{\@bsphack\marginpar
                                  {\normalshape\tiny\rm Label ##1}\@esphack}}
  \def\url#1{{\tt #1}}%
\def\cyracc{\def\u##1{
                \if \i##1\char"1A%
                \else \if I##1\char"12%
                \else \accent"24 ##1\fi\fi }%
\def\"##1{\if e##1{\char"1B}%
                \else \if E##1{\char"13}%
                \else \accent"7F ##1\fi\fi }%
\def\9##1{\if##1z\char"19
\else\if##1Z\char"11
\else\if##1E\char"03
\else\if##1e\char"0B
\else\if##1u\char"18
\else\if##1U\char"10
\else\if##1A\char"17
\else\if##1a\char"1F
\else\if##1p\char"7E
\else\if##1P\char"5E
\else\if##1Q\char"5F
\else\if##1q\char"7F
\else\if##1i\char"1A
\else\if##1I\char"12
\else\if##1N\char"7D
\fi
\fi
\fi
\fi
\fi
\fi
\fi
\fi
\fi
\fi
\fi
\fi
\fi
\fi
\fi
}%
\def\cydot{{\kern0pt}}}%
\def\cydot{$\cdot$}
        \def\Russian{0\relax
    \message{Don't know the hyphenation rules for Russian^^J
                        Please do INITeX with `input  russhyph' in the
                        command line}%
                \gdef\Russian{0\relax}%
        }
  \def\@putname#1#2#3#4{\def\@@ref{#3}\let\old@bf\bf
        \def\bf##1{\old@bf\if?\noexpand##1?{#4}\else##1\fi}%
    #1{#2}%
        \let\bf\old@bf}
  \def\@putname#1#2#3#4{\def\@@ref{#3}\let\old@bf\bf    
    \let\old@reset@font\reset@font          
        \def\bf##1{\old@bf\if?\noexpand##1?{#4}\else##1\fi}%
    \def\reset@font##1##2{\old@reset@font##1\if?\noexpand##2?{#4}\else##2\fi}#1{#2}%
        \let\bf\old@bf\let\reset@font\old@reset@font}
\let\my@ref=\ref
\def\ref#1{\@putname\my@ref{#1}{#1}{\tiny\rm\@@ref}}
\let\my@pageref=\pageref
\def\pageref#1{\@putname\my@pageref{#1}{#1}{\tiny\rm\@@ref}}
\let\my@cite=\cite
\def\cite#1{\@putname\my@cite{#1}{\@citeb}{\tiny\rm\@@ref}}
\theoremstyle{plain} 
\numberwithin{equation}{section}
\theoremstyle{definition}
\newtheorem{definition}{Definition}[section]
\newtheorem{definition}{Definition}
\newtheorem{conjecture}[definition]{Conjecture}
\newtheorem{example}[definition]{Example}
\theoremstyle{remark}
\newtheorem{remark}[definition]{Remark} 
\newtheorem{note}{Note}[section] 
\newtheorem{summary}{Summary}[section] 
\theoremstyle{plain} 
\newtheorem{theorem}[definition]{Theorem}
\newtheorem{lemma}[definition]{Lemma}
\newtheorem{corollary}[definition]{Corollary}
\newtheorem{proposition}[definition]{Proposition}
\begin{document}
\bibliographystyle{amsplain}

\ifx\useHugeSize\undefined
\else
\Huge
\fi

\relax
\renewcommand{\v}{\varepsilon} \newcommand{\p}{\rho}
\newcommand{\m}{\mu}
\def\im{{\rm Im}}
\def\ker{{\rm Ker}}
\def\End{{\rm End}}
\def\Pic{{\bf Pic}}
\def\re{{\bf re}}
\def\e{{\bf e}}
\def\a{\alpha}
\def\ve{\varepsilon}
\def\b{\beta}
\def\D{\Delta}
\def\d{\delta}
\def\f{{\varphi}}
\def\ga{{\gamma}}
\def\L{\Lambda}
\def\lo{{\bf l}}
\def\s{{\bf s}}
\def\A{{\bf A}}
\def\B{{\bf B}}
\def\cB{{\mathcal {B}}}
\def\C{{\mathbb C}}
\def\F{{\bf F}}
\def\G{{\mathfrak {G}}}
\def\g{{\mathfrak {g}}}
\def\b{{\mathfrak {b}}}
\def\q{{\mathfrak {q}}}
\def\f{{\mathfrak {f}}}
\def\k{{\mathfrak {k}}}
\def\l{{\mathfrak {l}}}
\def\m{{\mathfrak {m}}}
\def\n{{\mathfrak {n}}}
\def\o{{\mathfrak {o}}}
\def\p{{\mathfrak {p}}}
\def\s{{\mathfrak {s}}}
\def\t{{\mathfrak {t}}}
\def\r{{\mathfrak {r}}}
\def\z{{\mathfrak {z}}}
\def\h{{\mathfrak {h}}}
\def\H{{\mathcal {H}}}
\def\O{\Omega}
\def\M{{\mathcal {M}}}
\def\T{{\mathcal {T}}}
\def\N{{\mathcal {N}}}
\def\U{{\mathcal {U}}}
\def\Z{{\mathbb Z}}
\def\P{{\mathcal {P}}}
\def\GVM{ GVM }
\def\iff{ if and only if  }
\def\add{{\rm add}}
\def\deg{{\rm deg}}
\def\Hom{{\rm Hom}}
\def\ld{\ldots}
\def\vd{\vdots}
\def\sl{{\rm sl}}
\def\mod{{\rm mod}}
\def\len{{\rm len}}
\def\cd{\cdot}
\def\dd{\ddots}
\def\q{\quad}
\def\qq{\qquad}
\def\ol{\overline}
\def\tl{\tilde}
\def\nn{\nonumber}

\title [Representations of principal $W$-algebra]{Representations of principal $W$-algebra for the superalgebra $Q(n)$ and the super Yangian $YQ(1)$}

\author{ Elena Poletaeva and Vera Serganova }

\date{ \today }

\address{School of Mathematical and Statistical Sciences, University of Texas Rio Grande
Valley, Edinburg, TX 78539} \email{elena.poletaeva\atSign{}utrgv.edu}
\address{ Dept. of Mathematics, University of California at Berkeley,
Berkeley, CA 94720 } \email{serganov\atSign{}math.berkeley.edu}

\maketitle

\begin{abstract} We classify irreducible representations of finite $W$-algebra for the queer Lie superalgebra $Q(n)$ associated with the principal
  nilpotent coadjoint orbits. We use this classification and our previous results to obtain a classification of irreducible finite-dimensional representations of
  the super Yangian
  $YQ(1)$.
\end{abstract}

\section{Introduction}
In the classical case a finite $W_e$-algebra is a quantization of the
Slodowy slice to the adjoint orbit of a nilpotent element $e$ of a
semisimple Lie
algebra $\g$. Finite-dimensional simple $W_e$-modules are used for
classification of primitive ideals of $U(\g)$. Losev's work gives a new point of view on this classification, \cite{L1, L2, L3}.

In the supercase the theory of the primitive ideals is even more
complicated, \cite{CM}. It is interesting to generalize Losev's
result to the supercase. One step in this direction is to study
representations of finite $W$-algebras for a Lie superalgebra $\g$.
In the case when $\g=\mathfrak{gl}(m|n)$ and $e$ is the even principal nilpotent,
Brown, Brundan and Goodwin  classified irreducible
representations of $W_e$ and explored the connection with the category
$\mathcal O$ for $\g$ using coinvariants functor,  \cite{BBG, BG}.

In this paper, we study representations of finite $W$-algebras for the
Lie superalgebra $Q(n)$ associated with the principal even nilpotent coadjoint orbit.
Note that in this case the Cartan subalgebra $\h$ of $\g=Q(n)$ is not
abelian and contains a non-trivial odd part. By our previous
results (\cite{PS2}), we realize $W$ as a subalgebra of the universal
enveloping algebra $U(\h)$. One of the main results of the paper is a
classification of simple $W$-modules given in Theorem \ref{irreducible} (they are all finite-dimensional
by \cite{PS2}). The technique we use is completely different from
one used in \cite{BBG,BG} due to the lack of triangular decomposition of $W$ in
our case. Instead, we can describe the restriction of simple
$U(\h)$-modules to $W$
and prove that any simple $W$-module occurs as a constituent of this
restriction.

Next we proceed to classification of simple  finite-dimensional modules over the super Yangian $YQ(1)$ associated with the Lie superalgebra $Q(1)$.
The Yangians of type $Q$ were introduced by Nazarov in \cite{N1} and \cite{N}. In \cite{NS} these super Yangians were realized as limits of
certain centralizers in the universal enveloping algebras of type $Q$. We have shown previously in \cite{PS2} that a principal  $W$-algebra
(for any $n$) is a quotient of $YQ(1)$. Hence a simple module over a $W$-algebra can be lifted to a simple $YQ(1)$-module. However, not every simple
$YQ(1)$-module can be obtained in this way. We prove that any simple finite-dimensional $YQ(1)$-module is isomorphic to the tensor product of a module lifted
from a $W$-algebra and some one-dimensional module (Theorem \ref{mainYangian}).
We also obtain a formula for a generating function for the central character of a simple module.
This generating function is rational and probably should be considered as an analogue of the Drinfeld polynomial, see \cite{Mol} chapters 3, 4.

We also plan
in a subsequent paper to study the coinvariants functor from the category
$\mathcal O$ for $Q(n)$ to the category of $W$-modules.

As M. L. Nazarov pointed to us, it is also interesting to generalize the results of \cite{KN} to the case of $YQ(1)$ using the centralizer construction of $YQ(n)$ given in \cite{NS}.

\section{Notations and preliminary results}

We work in the category of super vector spaces over $\mathbb C$. All tensor products are over $\mathbb C$ unless specified otherwise.
By $\Pi$ we denote the functor of parity switch $\Pi(X)=X\otimes \mathbb C^{0|1}$.

Recall that if  $X$ is a simple finite-dimensional $\mathcal A$-module for some associative superalgebra $\mathcal A$, then
$\operatorname{End}_{\mathcal A}(X)=\mathbb C$ or $\operatorname{End}_{\mathcal A}(X)=\mathbb C[\epsilon]/(\epsilon^2-1)$, where the odd element
$\epsilon$ provides an $\mathcal A$ isomorphism $X\rightarrow \Pi(X)$. We say that $X$ is of M-type in the former case and of Q-type in the latter (see \cite{K1, CW}).

If $X$ and $Y$ are two simple
modules over associative superalgebras ${\mathcal A}$ and ${\mathcal B}$, we define the
$\mathcal A\otimes \mathcal B$-module $X\boxtimes Y$ as the usual tensor product
if at least one of $X$, $Y$ is of M-type and the tensor product over $\mathbb C[\epsilon]$
 if both $X$ and $Y$ are of Q-type.

In this paper we consider the Lie superalgebra $\g=Q(n)$ defined as follows (see \cite{K}). Equip  $\mathbb C^{n|n}$ with the odd operator $\zeta$ such
that $\zeta^2=-\operatorname{Id}$.
Then $Q(n)$ is the centralizer of $\zeta$ in the Lie superalgebra $\mathfrak{gl}(n|n)$. It is easy to see that $Q(n)$ consists of matrices
of the form $\left(\begin{matrix}A&B\\B&A\end{matrix}\right)$ where $A,B$ are $n\times n$-matrices. We fix the  Cartan subalgebra $\h\subset\g$ to be the set
of matrices
with diagonal $A$ and $B$. By $\mathfrak n^+$ (respectively, $\mathfrak n^-$) we denote the nilpotent subalgebras consisting of matrices with
strictly upper triangular (respectively, low triangular) $A$ and $B$.
The Lie superalgebra $\g$ has the triangular decomposition $\g=\mathfrak{n}^-\oplus\h\oplus \mathfrak{n}^+$ and we set $\b=\mathfrak{n}^+\oplus\h$.

Denote by $W$ the finite $W$-algebra associated with a principal\footnote{There is a unique open orbit in the nilpotent cone of the coadjoint
  representation, elements of this orbit are called principal.} even nilpotent element $\varphi$ in the coadjoint representation of $Q(n)$. Let us recall
the definition (see \cite{Pr1}).
Let $\{e_{i,j},f_{i,j}\,|\,i,j=1,\dots,n\}$ denote the basis consisting of elementary even and odd matrices. Choose $\varphi\in\g^*$ such that
$$\varphi(f_{i,j})=0,\quad \varphi(e_{i,j})=\delta_{i,j+1}.$$
Let $I_\varphi$ be the left ideal in $U(\g)$ generated by $x-\varphi(x)$ for all $x\in\mathfrak{n}^-$.
Let $\pi: U(\g)\rightarrow U(\g)/I_\varphi$ be the natural projection.
Then
$$W=\{\pi(y)\in U(\g)/I_\varphi\,|\,\operatorname{ad}(x)y\in I_\varphi\,\,\text{for all}\,\,x\in\mathfrak{n}^-\}.$$
Using identification of $U(\g)/I_\varphi$ with the Whittaker module $U(\g)\otimes_{U(\n)}\mathbb{C}_\varphi\simeq U(\b)\otimes \mathbb C$ we can consider $W$ as a subalgebra of $U(\b)$.
The natural projection $\vartheta: U(\b)\to U(\h)$ with the kernel
$\mathfrak{n}^+ U(\b)$ is called the {\it Harish-Chandra homomorphism}.
It is proven in \cite{PS2}
that the restriction of $\vartheta$ to $W$ is injective.

The center of $U(\g)$ is described in \cite{S}.
Set
$$\xi_i:=(-1)^{i+1}f_{i,i},\  x_i:=\xi_i^2=e_{i,i},$$ then $$U(\h)\simeq\mathbb C[\xi_1,\dots,\xi_n]/(\xi_i\xi_j+\xi_j\xi_i)_{i<j\leq n}.$$
The center of $U(\h)$ coincides with $\mathbb C[x_1,\dots,x_n]$ and the image of the center of $U(\g)$ under the Harish-Chandra homomorphism is generated
by the polynomials $p_{k}=x^{2k+1}_1+\dots+x_n^{2k+1}$ for all $k\in\mathbb N$. These polynomials are called $Q$-symmetric polynomials.

In \cite{PS2} we proved that the center $Z$ of $W$ coincides with the image of the center of $U(\g)$ and hence can be also identified with the ring of $Q$-symmetric polynomials.

\section {The structure of $W$-algebra}

Using Harish-Chandra
homomorphism we realize $W$ as a subalgebra
in $U(\h)$.
It is shown in \cite{PS3} that $W$ has $n$ even generators $z_0,\dots,z_{n-1}$ and $n$ odd generators  $\phi_0,\dots\phi_{n-1}$ defined as follows.
For $k\geq 0$ we set
 \begin{equation}\label{oddgen}
  \phi_0:=\sum_{i=1}^n\xi_i,\quad \phi_k:=T^{k}(\phi_0),
\end{equation}
where the matrix of $T$ in the standard basis $\xi_1,\dots,\xi_n$ has $0$ on the diagonal and
\begin{equation}\label{equmatrix}
t_{ij}:=\left\{ \begin{array}{cc}
 x_j &   \text{if} \quad  i<j,\\
 -x_j &  \text{if} \quad  i>j.\\
\end{array}\right.
\end{equation}
For odd $k\leq n-1$ we define
\begin{equation}\label{equA2}
z_{k}:= [\sum_{i_1< i_2\ldots < i_{k+1}}
(x_{i_1} + (-1)^{k}\xi_{i_1}) \cdots (x_{i_{k}} - \xi_{i_{k}})(x_{i_{k+1}} + \xi_{i_{k+1}})]_{even},
\end{equation}
and for even $k\geq 0$ we set
\begin{equation}\label{centgen}
  z_k:={1\over 2}[\phi_0,\phi_k].
\end{equation}

Let $W_0\subset W$ be the subalgebra generated by $z_0,\dots,z_{n-1}$. By \cite{PS2} Proposition 6.4, $W_0$ is isomorphic to the polynomial algebra
$\mathbb C[z_0,\dots,z_{n-1}]$.
Furthermore there are the following relations
\begin{equation}\label{oddrel}
  [\phi_i,\phi_j]=\begin{cases} (-1)^i 2z_{i+j}\,\text{if}\,\,i+j\,\text{is even}\\0 \,\text{if}\,\,i+j\,\text{is odd}\end{cases}
  \end{equation}

  Define the $\mathbb Z$-grading on $U(\h)$ by setting the degree of $\xi_i$ to be $1$. It induces the filtration on $W$, for every $y\in W$ we denote by $\bar y$
  the term of the highest degree.

  Note that for even $k$, we have $z_k=\bar z_k$.  Moreover, $z_k$ is
  in the image under the Harish-Chandra map of the center of the
  universal enveloping algebra $U(Q(n))$. Therefore by \cite{S}
  $z_{2p}$ is a $Q$-symmetric polynomial in $\mathbb C[x_1,\dots,x_n]$
  of degree $2p+1$. For example,
  $$z_0=x_1+\dots+x_n,\quad z_2=\frac{1}{3}\left((x_1^3+\dots+x_n^3) - (x_1+\dots+x_n)^3\right).$$
  For odd $k$ the leading term is given by the elementary symmetric polynomial
  $$\bar z_k=\sum_{i_1< i_2<\ldots <i_{k+1}}x_{i_1}\cdots x_{i_{k+1}}.$$

\begin{lemma}\label{structure}
  \begin{enumerate}
  \item $\operatorname{gr}W_0$ is isomorphic to the algebra of symmetric polynomials $\mathbb C[x_1,\dots,x_n]^{S_n}=\mathbb C[\bar z_0,\dots,\bar z_{n-1}]$
    and the degree of $\bar z_k$ is $2k+2$;
    \item $U(\h)$ is a free right $W_0$-module of rank $2^nn!$.
    \end{enumerate}
  \end{lemma}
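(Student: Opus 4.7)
The plan is to handle the two parts separately: part~(1) is the structural work of identifying the leading terms and matching them with generators of the symmetric ring, while part~(2) follows formally via a filtered-lifting argument.

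For part~(1), I first determine each leading term $\bar z_k$. For odd $k$, formula~(\ref{equA2}) gives $\bar z_k = e_{k+1}(x_1,\dots,x_n)$, the elementary symmetric polynomial, of $x$-degree $k+1$ and hence $\xi$-degree $2k+2$. For even $k=2p$, as noted just before the lemma, $z_k = \bar z_k$ lies in the image of the center of $U(\mathfrak g)$ under the Harish-Chandra map, so it is a $Q$-symmetric polynomial in $\mathbb C[x_1,\dots,x_n]$ of $x$-degree $2p+1$, hence $\xi$-degree $2k+2$, and in particular $S_n$-invariant. This gives the degree assertion and the inclusion $\operatorname{gr} W_0 \subseteq \mathbb C[x_1,\dots,x_n]^{S_n}$.

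For the reverse inclusion I prove by induction on $j=1,\dots,n$ that $e_j \in \mathbb C[\bar z_0,\dots,\bar z_{n-1}]$. When $j$ is even, the equality $e_j = \bar z_{j-1}$ handles the step directly. When $j=2p+1$ is odd, I use $\bar z_{2p}$: writing it as $\bar z_{2p} = c\, p_{2p+1} + (\text{polynomial in } p_1,p_3,\dots,p_{2p-1})$ in the $Q$-symmetric basis, and combining with Newton's identity $p_{2p+1} = (2p+1)\,e_{2p+1} + R(e_1,\dots,e_{2p})$, I can solve for $e_{2p+1}$ in terms of $\bar z_{2p}$ and the previously obtained $e_1,\dots,e_{2p}$, provided $c \neq 0$. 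The verification that $c \neq 0$ for every $p$ is the main obstacle; I expect to prove it either by a direct computation extracting the top $x$-degree contribution from $z_{2p} = \tfrac{1}{2}[\phi_0, T^{2p}\phi_0]$ via the matrix~(\ref{equmatrix}), or by matching $\bar z_{2p}$ against Sergeev's explicit generators of the Harish-Chandra image of the center, as recalled in the preceding section.

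For part~(2), the argument is then formal. Because $\xi_i^2 = x_i$ is central in $U(\mathfrak h)$ and the $\xi_i$ pairwise anticommute, $U(\mathfrak h)$ is a free right $\mathbb C[x_1,\dots,x_n]$-module of rank $2^n$, with basis $\{\xi_{i_1}\cdots\xi_{i_r} : 1 \le i_1 < \cdots < i_r \le n\}$. By the classical Chevalley theorem, $\mathbb C[x_1,\dots,x_n]$ is free of rank $n!$ over $\mathbb C[x_1,\dots,x_n]^{S_n}$. Composing, $U(\mathfrak h)$ is free of rank $2^n n!$ over $\mathbb C[x_1,\dots,x_n]^{S_n} = \operatorname{gr} W_0$. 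Since $U(\mathfrak h)$ is already graded with $\deg \xi_i = 1$, so that $\operatorname{gr} U(\mathfrak h) = U(\mathfrak h)$, the standard filtered-lifting principle upgrades any homogeneous $\operatorname{gr} W_0$-basis of $U(\mathfrak h)$ to a $W_0$-basis, yielding freeness of rank $2^n n!$ over $W_0$.
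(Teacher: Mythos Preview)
Your plan matches the paper's proof: part~(2) is exactly the filtered-lifting argument the paper writes out explicitly (choose a homogeneous basis of $U(\mathfrak h)$ over $\mathbb C[x_1,\dots,x_n]^{S_n}=\operatorname{gr}W_0$ and verify by a leading-term argument that it is a $W_0$-basis), while for part~(1) the paper simply asserts that $\bar z_0,\dots,\bar z_{n-1}$ are algebraically independent generators of $\mathbb C[x_1,\dots,x_n]^{S_n}$ without the inductive analysis you outline. Your flagged obstacle $c\neq 0$ is a genuine point the paper glosses over; one clean resolution is the recursion proved later (independently of this lemma) as Lemma~\ref{description}, namely $z_{2k}=\sigma_{2k+1}-\sum_{i=1}^k\sigma_{2i}z_{2k-2i}$, which gives $\bar z_{2k}\equiv e_{2k+1}\bmod\mathbb C[e_1,\dots,e_{2k}]$ directly and so bypasses the power-sum coefficient altogether.
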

  \begin{proof} Since $\bar z_0,\dots,\bar z_{n-1}$ are
    algebraically independent generators of $\mathbb C[x_1,\dots,x_n]^{S_n}$ we obtain (1).

    It is well-known fact that $\mathbb C[x_1,\dots,x_n]$ is a free $\mathbb C[x_1,\dots,x_n]^{S_n}$-module of rank $n!$, see, for example, \cite{Spr} Chapter 4.
    Since $U(\h)$ is a free
    $\mathbb C[x_1,\dots,x_n]$-module of rank $2^n$ we get that $U(\h)$ is a free $\mathbb C[x_1,\dots,x_n]^{S_n}$-module of rank $m=2^nn!$.
    Let us choose a homogeneous basis $b_1,\dots,b_m$ of $U(\h)$ over
    $\mathbb C[x_1,\dots,x_n]^{S_n}$. We claim that it is a basis of
    $U(\h)$
    as a right module over
    $W_0$. Indeed, let us prove first the linear independence. Suppose
    $$\sum_{j=1}^m b_jy_j=0$$
    for some $y_j\in W_0$. Let $k=\max\{\deg y_j+\deg b_j\,|j=1,\dots,m\}$. If $J=\{j\,|\,\deg y_j+\deg b_j=k\}$ we have $\sum_{j\in J}\bar b_jy_j=0$. By above this implies $\bar y_j=0$
    for all $j\in J$ and we obtain all $y_j=0$.
    On the other hand, it follows easily  by induction on degree that $U(\h)=\sum_{j=1}^mb_jW_0 $. The proof of (2) is complete.
  \end{proof}
 Consider $U(\h)$ as a free $U(\h_{\bar 0})$-module and let $W_1$ denote the free $U(\h_{\bar 0})$-submodule generated by $\xi_1,\dots,\xi_n$.
Then $W_1$ is equipped with $U(\h_{\bar 0})$-valued symmetric bilinear form $B(x,y)=[x,y]$.

   \begin{lemma}\label{cliff} Let $p(x_1,\dots,x_n):=\prod_{i<j}(x_i+x_j)$ and $\Gamma$ denotes the Gram matrix $B(\phi_i,\phi_j)$. Then
     $\operatorname{det}\Gamma=c p^2x_1\cdots x_n$, where $c$ is a non-zero constant.
   \end{lemma}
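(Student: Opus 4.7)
The plan is to realize $\Gamma$ as $2M^TDM$ for an explicit Krylov-type matrix $M$, and then to identify $\det M$ with $p$ up to a nonzero scalar.

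Since $B(\xi_i,\xi_j) = \xi_i\xi_j + \xi_j\xi_i = 2\delta_{ij}x_i$, the form $B$ has Gram matrix $2D$ with $D = \operatorname{diag}(x_1,\dots,x_n)$ in the basis $\xi_1,\dots,\xi_n$. By (\ref{oddgen}) the coordinates of $\phi_k = T^k\phi_0$ in this basis form the $k$th column of the Krylov matrix
$$M := [\,v \mid Tv \mid T^2v \mid \cdots \mid T^{n-1}v\,], \qquad v = (1,\dots,1)^T,$$
so a direct computation gives $\Gamma = 2M^TDM$ and consequently
$$\det\Gamma \;=\; 2^n \cdot x_1\cdots x_n \cdot (\det M)^2.$$
Because the entries of $T$ in (\ref{equmatrix}) are linear in the $x_j$'s, $\det M$ is a homogeneous polynomial in $\mathbb{C}[x_1,\dots,x_n]$ of degree $\binom{n}{2} = \deg p$. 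It therefore suffices to show $\det M = c'\,p$ for some $c' \in \mathbb{C}^{\times}$.

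The heart of the argument is the divisibility $p \mid \det M$. For the adjacent pair $(1,2)$, equation (\ref{equmatrix}) yields
$$(Tw)_1 - (Tw)_2 \;=\; x_1w_1 + x_2w_2 \qquad \text{for every } w \in \mathbb{C}^n,$$
since no intermediate index contributes. Hence upon the specialization $x_1+x_2 = 0$, the hyperplane $V = \{w \in \mathbb{C}^n : w_1 = w_2\}$ becomes $T$-stable; as $v \in V$ and $\dim V = n-1$, all Krylov vectors $T^k v$ remain in $V$, forcing $\det M \equiv 0 \pmod{x_1+x_2}$.

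To extend the divisibility to non-adjacent pairs I would invoke an $S_n$-symmetry argument. By (\ref{oddrel}) the entries of $\Gamma$ are $\pm 2z_{i+j}$ or zero; the description of the $z_k$ recalled at the end of Section~2 shows that each $z_k$ with $k$ even lies in $\mathbb{C}[x_1,\dots,x_n]^{S_n}$, so $\det\Gamma$ is symmetric and hence so is $(\det M)^2$. In the UFD $\mathbb{C}[x_1,\dots,x_n]$ this forces $\det M$ to transform under $S_n$ by either the trivial or the sign character. The sign case would require $(x_1-x_2)\mid \det M$; I would exclude it by evaluating at $x_1=\dots=x_n=1$, where $T$ collapses to the antisymmetric sign matrix $S$ and a direct computation verifies that the Krylov determinant of $v$ under $S$ is a nonzero integer. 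Thus $\det M$ is symmetric, the vanishing on $x_1+x_2=0$ propagates under $S_n$ to every factor $x_i+x_j$, and $p\mid\det M$. A degree comparison then gives $\det M = c'p$ with $c' \neq 0$ (since $\det M$ is not the zero polynomial), whence $\det\Gamma = c\cdot p^2 \cdot x_1\cdots x_n$ with $c = 2^n(c')^2$.

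The principal obstacle is the symmetry step: the matrix $T$ itself is not $S_n$-equivariant in the variables $x_i$, so the symmetry of $\det M$ must be extracted indirectly via the symmetry of $\det\Gamma$, and the UFD parity argument has to be paired with a concrete specialization ruling out antisymmetry.
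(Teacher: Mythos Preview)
Your approach coincides with the paper's: write $\Gamma=Y^tCY$ with $C=\operatorname{diag}(x_1,\dots,x_n)$ (your $2M^TDM$), use that the entries $B(\phi_i,\phi_j)=\pm 2z_{i+j}$ are symmetric polynomials so that $\det\Gamma$ is symmetric, and kill $\det M$ on $x_1+x_2=0$ by the observation that the first two coordinates of every $T^k\phi_0$ agree there.

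The one place you diverge is the symmetry step, and here you are working harder than necessary. The paper never asks whether $\det M$ itself is symmetric or alternating: since $\det\Gamma$ is already symmetric and $(x_1+x_2)^2\mid\det\Gamma$, the $S_n$-action on $\det\Gamma$ alone propagates this to every $(x_i+x_j)^2$, giving $p^2\mid\det\Gamma$ directly. This dissolves what you flag as the ``principal obstacle'' and makes the parity dichotomy for $\det M$ and the specialization at $x_1=\dots=x_n=1$ unnecessary for the divisibility argument. That said, your specialization does buy one thing the paper's proof leaves implicit: it certifies $c\neq 0$, which in the paper is hidden in the bare assertion that ``the degree of this polynomial is $n^2$.'' (You would still owe a uniform argument that the Krylov determinant at $x_i=1$ is nonzero for all $n$, not just by inspection in small cases.)
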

   \begin{proof} Recall that $\phi_k=T^{k}\phi_0$.
   Since the matrix
     of the form $B$ in the basis $\xi_1,\dots,\xi_n$ is the diagonal
     matrix
     $C=\operatorname{diag}(x_1,\dots,x_n)$, then $\Gamma=Y^t C Y$, where $Y$ is the square matrix such that $\phi_i=\sum_{j=1}^n y_{ij}\xi_j$.
     Hence det $\Gamma=x_1\cdots x_n\operatorname{det}Y^2$. Since $B(\phi_i,\phi_j)$ is a symmetric polynomial in $x_1,\dots,x_n$, the determinant of $\Gamma$ is also a
     symmetric polynomial.
     The degree of this polynomial is $n^2$. Therefore it suffices to prove that $(x_1+x_2)^2$ divides $\operatorname{det}\Gamma$,  or equivalently
     $x_1+x_2$ divides $\operatorname{det}Y$. In other words, we have to show that if $x_1=-x_2$, then $\phi_0,\dots,\phi_{n-1}$ are linearly dependent.
     Indeed, one can easily see from the form of $T$ that the first and the second coordinates of $T^k\phi_0$ coincide, hence $\phi_0,T\phi_0,\dots, T^{n-1}\phi_0$
     are linearly dependent.
   \end{proof}

  We also will use another generators in $W$ introduced in \cite{PS3}, Corollary 5.15:
           \begin{align}\label{gen1}
&u_{k}(0):= [\sum_{1\leq i_1< i_2<\ldots < i_k\leq n}
(x_{i_1} + (-1)^{k+1}\xi_{i_1}) \cdots (x_{i_{k-1}} - \xi_{i_{k-1}})(x_{i_{k}} + \xi_{i_{k}})]_{even},\\
&u_k(1):= [\sum_{1\leq i_1<i_2<\ldots <i_k\leq n}
(x_{i_1} + (-1)^{k+1}\xi_{i_1}) \cdots (x_{i_{k-1}} - \xi_{i_{k-1}})(x_{i_{k}} + \xi_{i_{k}})]_{odd}.
\nonumber
           \end{align}
           For convenience we assume $u_k(0)=u_k(1)=0$ for $k>n$.

           Let $i+j=n$. We have the natural embedding of the Lie superalgebras $Q(i)\oplus Q(j)\hookrightarrow Q(n)$. If $\h_r$ denotes the Cartan subalgebra of $Q(r)$,
           the above embedding induces the isomorphism
           \begin{equation}\label{tp}
             U(\h)\simeq U(\h_i)\otimes U(\h_j).
             \end{equation}
           The following lemma implies that we have also the embedding of $W$-algebras.
\begin{lemma}\label{insert} Let $i+j=n$. Then $W$ is a subalgebra in the tensor product
         $W^{i}\otimes W^{j}$, where $W^{r}\subset U(\h_r)$ denotes the $W$-algebra for
           $Q(r)$.
         \end{lemma}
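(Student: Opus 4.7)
The plan is to verify the containment on a generating set of $W$. By \cite{PS3}, Corollary 5.15, the elements $u_k(0)$ and $u_k(1)$ from (\ref{gen1}) generate $W$, so it suffices to show that each of them lies in $W^{i}\otimes W^{j}$, viewed as a subalgebra of $U(\h)\simeq U(\h_i)\otimes U(\h_j)$ under the isomorphism (\ref{tp}).

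Fix a subset $S=\{i_1<\cdots<i_k\}\subset\{1,\ldots,n\}$ and split it as $S=S_1\sqcup S_2$ with $S_1=S\cap\{1,\ldots,i\}$ of size $a$ and $S_2=S\cap\{i+1,\ldots,n\}$ of size $b$; thus $a+b=k$. Since every element of $S_1$ precedes every element of $S_2$, the product in (\ref{gen1}) factors into an $S_1$-head of $a$ factors followed by an $S_2$-tail of $b$ factors. The sign at the $m$-th position in the full product is $(-1)^{k-m}$: for $m\leq a$ this rewrites as $(-1)^{a-m}(-1)^b$, and for $m=a+m'$ with $1\leq m'\leq b$ it equals $(-1)^{b-m'}$. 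Hence the $S_2$-tail matches exactly the product defining the $Q(j)$-analog of (\ref{gen1}) on the variables indexed by $\{i+1,\ldots,n\}$, while the $S_1$-head matches the $Q(i)$-analog on the variables indexed by $\{1,\ldots,i\}$ up to the overall substitution $\xi_p\mapsto -\xi_p$ when $b$ is odd. Such a substitution preserves the even part of a polynomial and negates its odd part.

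Taking the even part of the sum over all size-$k$ subsets $S$, grouping terms by the pair $(a,b)$, and using that the sums over $S_1$ and $S_2$ decouple in the tensor decomposition (\ref{tp}), I obtain
$$u_k(0)=\sum_{a+b=k}\bigl(u_a^{i}(0)\,u_b^{j}(0)+(-1)^b\,u_a^{i}(1)\,u_b^{j}(1)\bigr),$$
with an analogous identity for $u_k(1)$ in which each summand pairs opposite parities. Every summand is a product of an element of $W^{i}$ with an element of $W^{j}$, and therefore lies in $W^{i}\otimes W^{j}$, proving the lemma.

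The main obstacle is the sign bookkeeping: the parity of $b$ governs whether the $S_1$-head is literally a generator of $W^{i}$ or its image under $\xi\mapsto -\xi$, which is exactly what produces the $(-1)^b$ weight on the odd-odd pair in the expansion. Once this is tracked, the rest of the argument is a direct application of the super-parity decomposition $[XY]_{\rm ev}=X_{\rm ev}Y_{\rm ev}+X_{\rm odd}Y_{\rm odd}$ for $X\in U(\h_i)$ and $Y\in U(\h_j)$.
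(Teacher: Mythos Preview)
Your proof is correct and follows essentially the same approach as the paper: you split each subset $S$ into its $\{1,\ldots,i\}$- and $\{i+1,\ldots,n\}$-parts, factor the alternating product accordingly, and recover exactly the decomposition formula $u_k(d)=\sum_{e+f=d}\sum_{a+b=k}(-1)^{eb}u_a^+(e)\,u_b^-(f)$ that the paper states. The paper asserts this identity without working through the sign details, whereas you carry out the sign bookkeeping explicitly; your observation that the $(-1)^b$ arises from the substitution $\xi_p\mapsto -\xi_p$ on the $S_1$-head is precisely what underlies the factor $(-1)^{eb}$.
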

         \begin{proof}
           Introduce generators in $W^{i}$ and $W^{j}$:
                      \begin{align}\label{gen1+}
&u^+_{k}(0):= [\sum_{1\leq i_1< i_2<\ldots < i_k\leq i}
(x_{i_1} + (-1)^{k+1}\xi_{i_1}) \cdots (x_{i_{k-1}} - \xi_{i_{k-1}})(x_{i_{k}} + \xi_{i_{k}})]_{even},\\
&u^+_k(1):= [\sum_{1\leq i_1<i_2<\ldots <i_k\leq i}
(x_{i_1} + (-1)^{k+1}\xi_{i_1}) \cdots (x_{i_{k-1}} - \xi_{i_{k-1}})(x_{i_{k}} + \xi_{i_{k}})]_{odd}.
\nonumber
           \end{align}
                      \begin{align}\label{gen1-}
&u^-_{k}(0):= [\sum_{i+1\leq i_1< i_2<\ldots < i_k\leq n}
(x_{i_1} + (-1)^{k+1}\xi_{i_1}) \cdots (x_{i_{k-1}} - \xi_{i_{k-1}})(x_{i_{k}} + \xi_{i_{k}})]_{even},\\
&u^-_k(1):= [\sum_{i+1\leq i_1<i_2<\ldots <i_k\leq n}
(x_{i_1} + (-1)^{k+1}\xi_{i_1}) \cdots (x_{i_{k-1}} - \xi_{i_{k-1}})(x_{i_{k}} + \xi_{i_{k}})]_{odd}.
\nonumber
                      \end{align}

 Then for $d,e,f\in \mathbb Z/2\mathbb Z$ we have
\begin{equation}\label{decomposition}
  u_k(d)=\sum_{e+f=d}\sum_{a+b=k}(-1)^{eb}u^+_a(e)u_b^-(f).
  \end{equation}
Here we assume $u_0^{\pm}(0)=1$ and $u_0^{\pm}(1)=0$.
\end{proof}
         \begin{corollary}\label{cor:insert} If $i_1+\dots+i_p=n$, then $W$ is a subalgebra in $W^{i_1}\otimes\dots\otimes W^{i_p}$.
           \end{corollary}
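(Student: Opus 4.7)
The plan is to prove this by induction on $p$, using Lemma \ref{insert} as both the base case and the inductive step.

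For $p=1$ there is nothing to prove, and for $p=2$ the statement is exactly Lemma \ref{insert}. For the inductive step, suppose the corollary holds for all decompositions into $p-1$ summands and consider a decomposition $i_1+\dots+i_p=n$. Set $i:=i_1$ and $j:=i_2+\dots+i_p$, so $i+j=n$. Applying Lemma \ref{insert} to this two-part decomposition gives the embedding
\[
W \;\hookrightarrow\; W^{i_1} \otimes W^{j},
\]
where $W^{j}\subset U(\h_j)$ is the principal $W$-algebra for $Q(j)$. By the inductive hypothesis applied to the decomposition $i_2+\dots+i_p=j$ of $j$, we have
\[
W^{j} \;\hookrightarrow\; W^{i_2}\otimes\dots\otimes W^{i_p}.
\]
Tensoring this embedding on the left with $W^{i_1}$ and composing with the previous embedding yields
\[
W \;\hookrightarrow\; W^{i_1}\otimes W^{i_2}\otimes\dots\otimes W^{i_p},
\]
which is what we wanted.

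To see that composing the two embeddings makes sense, one uses that the identification \eqref{tp} of $U(\h)$ with $U(\h_i)\otimes U(\h_j)$ coming from the block embedding $Q(i)\oplus Q(j)\hookrightarrow Q(n)$ iterates in the obvious associative way: the composition of $U(\h)\simeq U(\h_{i_1})\otimes U(\h_j)$ with $U(\h_j)\simeq U(\h_{i_2})\otimes\dots\otimes U(\h_{i_p})$ agrees with the identification $U(\h)\simeq U(\h_{i_1})\otimes\dots\otimes U(\h_{i_p})$ induced by the block embedding $Q(i_1)\oplus\dots\oplus Q(i_p)\hookrightarrow Q(n)$. Under this identification, $W$ lands inside $W^{i_1}\otimes W^{i_2}\otimes\dots\otimes W^{i_p}$.

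I do not expect any serious obstacle: the content of the corollary is already carried by Lemma \ref{insert}, since the explicit formula \eqref{decomposition} for the generators $u_k(d)$ in terms of the $u^{\pm}_a(e)$ is associative with respect to further splitting of the index set $\{1,\dots,n\}$ into blocks. Thus the only point worth checking carefully is the compatibility of the iterated Harish-Chandra identifications, which is routine.
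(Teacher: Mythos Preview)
Your proof is correct and is precisely the argument the paper has in mind: the corollary is stated without proof immediately after Lemma~\ref{insert}, and the intended derivation is the obvious induction on $p$ using that lemma, together with the associativity of the identification~\eqref{tp}. Your remark that the decomposition formula~\eqref{decomposition} is associative under further block splitting is the right way to see compatibility, though for the bare statement of the corollary the iterated embedding argument you give already suffices.
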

\section {Irreducible representations of $W$}

\subsection{Representations of $U(\h)$}
Let $\mathbf s=(s_1,\dots,s_n)\in\mathbb C^n$. We call $\mathbf s$ {\it regular} if $s_i\neq 0$ for all $i\leq n$ and {\it typical} if $s_i+s_j\neq 0$ for all
$i\not= j\leq n$.

It follows from the representation theory of Clifford algebras that all irreducible representations of $U(\h)$  up to
change of parity can be parameterized  by $\mathbf s\in\mathbb C^n$.
Indeed, let $M$ be an irreducible representation of $U(\h)$. By Schur's lemma every $x_i$ acts on $M$ as a scalar operator $s_i\operatorname{Id}$.
Let $I_{\mathbf s}$ denote the ideal in $U(\h)$ generated by
$x_i-s_i$, then the quotient algebra $U(\h)/I_{\mathbf s}$ is
isomorphic to the Clifford superalgebra $C_{\mathbf s}$
\footnote {We consider Clifford algebras as superalgebras with the natural $\mathbb Z_2$-grading} associated with the quadratic form:
$$B_{\mathbf s}(\xi_i,\xi_j)=\delta_{ij}2s_i.$$
Then $M$ is a simple $C_{\mathbf s}$-module.

The radical $R_{\mathbf s}$ of $C_{\mathbf s}$ is generated by the kernel of the form $B_{\mathbf s}$.
Let $m({\mathbf s})$ be the number of non-zero coordinates of
${\mathbf s}$, then $C_{\mathbf s}/R_{\mathbf s}$ is isomorphic to the
matrix
superalgebra $M(2^{\frac{m}{2}-1}|2^{\frac{m}{2}-1})$ for
even $m$ and to the superalgebra $M(2^{\frac{m-1}{2}})\otimes\mathbb C[\epsilon]/(\epsilon^2-1)$ for odd $m$.

Therefore $C_{\mathbf s}$ has one (up to isomorphism) simple $\Z_2$-graded module  $V(\mathbf s)$ of type
Q for odd $m({\mathbf s})$,
 and two simple modules $V(\mathbf s)$ and $\Pi V(\mathbf s)$ of type M for even $m({\mathbf s})$ (see \cite{M}).
 In the case when $\mathbf s$ is regular, the form $B_{\mathbf s}$ is non-degenerate
 and the dimension of $V(\mathbf s)$ equals $2^k$, where $k=\lceil n/2\rceil$. In general, $\dim V(\mathbf s)=2^{\lceil m(\mathbf s)/2\rceil}$.

 Consider the embedding $Q(p)\oplus Q(q)\hookrightarrow Q(n)$ for
 $p+q=n$ and the isomorphism (\ref{tp}). It induces an isomorphism of $U(\h)$-modules
 \begin{equation}\label{tensprod}
   V(\mathbf s)\simeq V(s_1,\dots,s_p)\boxtimes V(s_{p+1},\dots,s_n).
   \end{equation}

   \subsection{Restriction from $U(\h)$ to $W$}
 We denote by the same symbol $V(\mathbf s)$ the restriction to $W$ of the $U(\h)$-module $V(\mathbf s)$.
 \begin{proposition}\label{subquotient} Let $S$ be a simple
 $W$-module. Then $S$ is a simple constituent of $V(\mathbf s)$ for
 some
 $\mathbf s\in\mathbb C^n$.
 \end{proposition}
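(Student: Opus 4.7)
The plan is to induce $S$ up to a $U(\h)$-module and extract the desired $\mathbf s$ from a simple quotient. Concretely, form $M := U(\h)\otimes_W S$, viewed as a left $U(\h)$-module via left multiplication. Since $\dim S<\infty$ by \cite{PS2} and $M$ is generated over $U(\h)$ by the finite-dimensional subspace $1\otimes S$, $M$ is a finitely generated $U(\h)$-module, and the whole argument reduces to showing $M\neq 0$.

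The key step is to establish that $U(\h)$ is free as a right $W$-module (of rank $n!$), which will make the canonical map $S = W\otimes_W S \to U(\h)\otimes_W S = M$ split injective. I would work with the $\mathbb Z$-filtration on $U(\h)$ given by $\deg\xi_i=1$. Lemma \ref{structure} supplies $\operatorname{gr} W_0 = \mathbb C[x_1,\dots,x_n]^{S_n}$ and $\operatorname{gr} U(\h)$ free of rank $2^n n!$ over $\operatorname{gr} W_0$. The super-Clifford relations $[\phi_i,\phi_j]=(-1)^i 2 z_{i+j}$ together with Lemma \ref{cliff} (whose nonvanishing Gram determinant witnesses the $\mathbb C[x_1,\dots,x_n]$-linear independence of $\bar\phi_0,\dots,\bar\phi_{n-1}$) make the ordered monomials in the $\phi_i$'s a basis of $W$ over $W_0$ of rank $2^n$. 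A standard PBW/filtered-graded argument then exhibits $U(\h)$ as free of rank $n!$ over $W$, so the inclusion $W\hookrightarrow U(\h)$ splits as right $W$-modules and $M$ contains $S$, in particular $M\neq 0$.

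With $M$ nonzero and finitely generated over $U(\h)$, pick a maximal proper $U(\h)$-submodule $N\subset M$; then $M/N$ is a simple $U(\h)$-module, hence isomorphic to $V(\mathbf s)$ or $\Pi V(\mathbf s)$ for some $\mathbf s\in\mathbb C^n$ by the classification recalled just before the proposition. The composition $S\hookrightarrow M\twoheadrightarrow M/N$ is $W$-linear and must be nonzero: otherwise $1\otimes S\subset N$ and hence $M = U(\h)\cdot(1\otimes S)\subset N$, contradicting $N\subsetneq M$. Being a nonzero $W$-map from a simple module into a finite-dimensional one, it is injective, so $S$ embeds into $V(\mathbf s)$ (or into $\Pi V(\mathbf s)$, which has the same set of simple constituents up to parity). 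In particular $S$ is a simple constituent of $V(\mathbf s)$, as required.

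The main obstacle is the freeness of $U(\h)$ over $W$ in the second step; once that is in hand, the remainder is a soft Frobenius-reciprocity-type application, and the compatibility of central characters between $S$ and $V(\mathbf s)$ comes out automatically from the construction.
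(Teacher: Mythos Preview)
Your approach diverges from the paper's and contains a genuine gap at exactly the point you flag as the main obstacle: $U(\mathfrak h)$ is \emph{not} free over $W$, so the ``standard PBW/filtered-graded argument'' you invoke cannot succeed. Knowing that $U(\mathfrak h)$ is free over $W_0$ of rank $2^n n!$ and that $W$ is free over $W_0$ of rank $2^n$ does not by itself imply freeness of $U(\mathfrak h)$ over $W$; that inference is simply invalid. Concretely, take $n=2$ and the one-dimensional module $\Gamma_t$ of Lemma~\ref{n=2}. In $U(\mathfrak h)$ one has $\phi_1=-x_1\phi_0+\xi_1 z_0$, so the left ideal $U(\mathfrak h)\phi_0+U(\mathfrak h)\phi_1+U(\mathfrak h)z_0+U(\mathfrak h)(z_1-t)$ is already generated by $\phi_0,z_0,z_1-t$. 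Reducing modulo the central element $z_0=x_1+x_2$ and then by the left ideal of $\phi_0=\xi_1+\xi_2$ leaves a free $\mathbb C[x_1]$-module on $\{1,\xi_1\}$, on which $z_1-t$ becomes $-(x_1^2-x_1+t)$. Hence $\dim\bigl(U(\mathfrak h)\otimes_W\Gamma_t\bigr)=4$, whereas freeness of rank $n!=2$ would force this dimension to be $2$. The structural reason is visible in Lemma~\ref{cliff}: the change-of-basis matrix $Y$ from the $\xi_j$ to the $\bar\phi_i$ has determinant a nonconstant polynomial, so the $\bar\phi_i$ generate a strictly smaller $\mathbb C[x_1,\dots,x_n]$-lattice than the $\xi_j$; at the associated-graded level the quotient of Hilbert series even has negative coefficients.

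The paper sidesteps this entirely by inducing from $W_0$ rather than from $W$. Since $W_0$ is commutative and $S$ is finite-dimensional, $S$ contains a one-dimensional $W_0$-submodule $\mathbb C_\nu$, so $S$ is a quotient of $\operatorname{Ind}^{W}_{W_0}\mathbb C_\nu$. The inclusion $W\hookrightarrow U(\mathfrak h)$ induces an embedding $\operatorname{Ind}^{W}_{W_0}\mathbb C_\nu\hookrightarrow\operatorname{Ind}^{U(\mathfrak h)}_{W_0}\mathbb C_\nu$, and the target is finite-dimensional by Lemma~\ref{structure}(2). Hence $S$ occurs as a $W$-constituent of some $V(\mathbf s)$. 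The only freeness needed is that of $U(\mathfrak h)$ over $W_0$, which is precisely what Lemma~\ref{structure} supplies.
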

 \begin{proof} Since $W_0$ is commutative and $S$ is
 finite-dimensional (see \cite{PS2}), there exists one dimensional $W_0$-submodule
 $\mathbb C_{\nu}\subset S$ with character
   $\nu$. Therefore $S$ is a quotient of
 $\operatorname{Ind}^{W}_{W_0}\mathbb C_{\nu}$. On the other hand, the
 embedding $W\hookrightarrow U(\h)$ induces the
   embedding
   $\operatorname{Ind}^{W}_{W_0}\mathbb
 C_{\nu}\hookrightarrow\operatorname{Ind}^{U(\h)}_{W_0}\mathbb
 C_{\nu}$. Thus, $S$ is a simple constituent of
 $\operatorname{Res}_W\operatorname{Ind}^{U(\h)}_{W_0}\mathbb C_{\nu}$. By Lemma \ref{structure}, $\operatorname{Ind}^{U(\h)}_{W_0}\mathbb C_{\nu}$
   is finite-dimensional, and hence has simple $U(\h)$-constituents isomorphic to $V(\mathbf s)$ for some $\mathbf s$. Hence $S$ must appear as a
   simple $W$-constituent of some $V(\mathbf s)$.
 \end{proof}

  \subsection{Typical representations}

 \begin{theorem}\label{regular} If $\mathbf s$ is typical, then  $V(\mathbf s)$ is a simple $W$-module.
   \end{theorem}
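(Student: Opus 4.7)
The plan is to identify $V(\mathbf{s})$ as a simple module over the Clifford quotient and transfer simplicity to $W$. Since $V(\mathbf{s})$ is, by construction, a simple module over $C_{\mathbf{s}}/R_{\mathbf{s}}$, it will suffice to show that the image of $W$ in $\operatorname{End}(V(\mathbf{s}))$ contains the image of $C_{\mathbf{s}}/R_{\mathbf{s}}$; any $W$-submodule is then automatically a $C_{\mathbf{s}}/R_{\mathbf{s}}$-submodule and simplicity follows. Because $C_{\mathbf{s}}/R_{\mathbf{s}}$ is generated as an algebra by its odd part (the image of $\operatorname{span}(\xi_1, \dots, \xi_n)$), the task further reduces to proving that the images of the odd generators $\phi_0, \dots, \phi_{n-1} \in W$ in $C_{\mathbf{s}}/R_{\mathbf{s}}$ already span this odd part. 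The central input is Lemma \ref{cliff}, which computes the determinant of the Gram matrix $\Gamma$ of the $\phi_k$ as $c \, p^2 \, x_1 \cdots x_n$ with $p = \prod_{i<j}(x_i + x_j)$.

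In the \textbf{regular typical} subcase, all $s_i \neq 0$, so $p(\mathbf{s})^2\, s_1 \cdots s_n \neq 0$ and $\Gamma(\mathbf{s})$ is nondegenerate. Then $\phi_0(\mathbf{s}), \dots, \phi_{n-1}(\mathbf{s})$ are $n$ linearly independent vectors in the $n$-dimensional odd space of $C_{\mathbf{s}} = C_{\mathbf{s}}/R_{\mathbf{s}}$, so they span it, and the reduction above finishes the proof.

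If $\mathbf{s}$ is \textbf{typical but not regular}, typicality forces exactly one coordinate to vanish, say $s_j = 0$; then $R_{\mathbf{s}}$ is generated by $\xi_j$ and the odd part of $C_{\mathbf{s}}/R_{\mathbf{s}}$ has dimension $n-1$. I would verify by direct inspection of \ref{equmatrix} that the $j$-th column of $T(\mathbf{s})$ vanishes, and that deleting row and column $j$ from $T(\mathbf{s})$ recovers, after the obvious order-preserving relabeling $\{1,\dots,n\}\setminus\{j\}\to\{1,\dots,n-1\}$, the matrix $T'$ defining the $W$-algebra of $Q(n-1)$ at the parameter $\mathbf{s}'$ obtained from $\mathbf{s}$ by deleting $s_j$. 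Since $\phi_0$ reduces modulo $R_{\mathbf{s}}$ to $\sum_{i \neq j} \xi_i$, matching the $Q(n-1)$ starting vector, the reductions of $\phi_0, \dots, \phi_{n-2}$ modulo $R_{\mathbf{s}}$ coincide with the analogous $W$-algebra generators for $Q(n-1)$ at $\mathbf{s}'$. Since $\mathbf{s}'$ is regular and typical for $Q(n-1)$, the previous subcase applied to $Q(n-1)$ shows that these $n-1$ reductions already span the $(n-1)$-dimensional odd part of $C_{\mathbf{s}}/R_{\mathbf{s}}$, as required.

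The main obstacle I anticipate is the index bookkeeping in the non-regular case: one must carefully check that the restriction of the $Q(n)$ operator $T$ modulo the radical genuinely reproduces the $Q(n-1)$ operator at the reduced parameter, and that the $\phi_k$ reduce to the corresponding generators (not merely to \emph{some} spanning family). Once this identification is in place, the argument collapses to Lemma \ref{cliff} plus the Clifford-module description of $V(\mathbf{s})$, with no further representation-theoretic input needed.
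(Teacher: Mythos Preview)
Your proposal is correct and follows essentially the same route as the paper: use Lemma~\ref{cliff} to show that in the regular typical case the specialized $\phi_k$ span the odd part of $C_{\mathbf s}$ (so the image of $W$ is all of $C_{\mathbf s}$), and in the non-regular typical case reduce modulo $\xi_j$ to the $Q(n-1)$ setup at the deleted parameter $\mathbf s'$. The key point you flag---that the vanishing of the $j$-th column of $T(\mathbf s)$ is exactly what makes $\pi\circ T = T'\circ\pi$ and hence $\phi_k \bmod \xi_j = (T')^k(\phi_0 \bmod \xi_j)$---is precisely the content of the paper's minor computation with $Y'$, so your bookkeeping worry is well placed but easily resolved.
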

   \begin{proof} First, we assume
     that $\mathbf   s$ is regular,
     i.e.  $s_i\neq 0$ for all $i=1,\dots,n$.
     The specialization  $x_i\mapsto s_i$ induces an injective
   homomorphism $\theta_{\mathbf s}: W/(I_{\mathbf s}\cap
   W)\hookrightarrow C_{\mathbf s}$
   and a specialization
of the quadratic form $B\mapsto B_{\mathbf s}$. By Lemma \ref{cliff}
     $\operatorname{det}\Gamma(\mathbf s)\neq 0$. Therefore $B_{\mathbf s}$ is non-degenerate and $\theta_{\mathbf s}$ is an isomorphism.
Thus, $V(\mathbf s)$ remains irreducible when restricted to $W$.

If $\mathbf s$ is typical non-regular, there is exactly one $i$ such
that $s_i=0$. Let $\mathbf s'=(s_1,\dots,s_{i-1},s_{i+1},\dots s_n)$.
Note that $(\theta_{\mathbf s}(\xi_i))$ is a nilpotent ideal  of $C_{\mathbf s}$ and hence
$\xi_i$ acts by zero on $V(\mathbf s)$.
Then $V(\mathbf s)$ is a simple module over the
quotient $C_{\mathbf s'}\simeq C_{\mathbf s}/(\theta_{\mathbf s}(\xi_i))$.
Recall
$Y$ from the proof of Lemma \ref{cliff} and let $Y'$
denote the minor of $Y$ obtained by removing the $i$-th column and the $i$-th row. Then
$$\phi_k=\sum_{j\neq i}y'_{kj}\xi_j \mathrm{mod}\,(\xi_i).$$
Hence $\theta_{\mathbf s}(\phi_0),\dots,\theta_{\mathbf s}(\phi_{n-1})$
generate
$C_{\mathbf s'}\simeq C_{\mathbf s}/(\theta_{\mathbf s}(\xi_i))$
and the statement follows from the regular
case for $n-1$.
\end{proof}

\subsection{Simple $W$-modules for $n=2$}
Let $n=2$, then by Theorem \ref{regular} $V(\mathbf s)$ is simple as $W$-module if $s_1\neq -s_2$.  The action of $U(\h)$ in $V(s_1,s_2)$ is given by the following formulas in a
   suitable basis:
   $$\xi_1\mapsto \left(\begin{matrix}0&\sqrt{s_1}\\ \sqrt{s_1}&0\end{matrix}\right),\quad \xi_2\mapsto \left(\begin{matrix}0&\sqrt{s_2}{\mathbf i}\\ -\sqrt{s_2}{\mathbf i}&0\end{matrix}\right).$$
   Note that $W$ is generated by $\phi_0, \phi_1, z_0$ and $z'_1$, where $z'_1 := u_2(0)$.
  Using
   $$\phi_0=\xi_1+\xi_2,\,\,\phi_1=x_2\xi_1-x_1\xi_2,\,\,z_0=x_1+x_2,\,\,z'_1=x_1x_2-\xi_1\xi_2$$
   we obtain the following formulas for the generators of $W$:
\begin{equation}\label{formula1}
         \phi_0\mapsto\left(\begin{matrix}0&\sqrt{s_1}+\sqrt{s_2}{\mathbf i}\\ \sqrt{s_1}-\sqrt{s_2}{\mathbf i}&0\end{matrix}\right),\quad
         \phi_1\mapsto\sqrt{s_1s_2}\left(\begin{matrix}0&\sqrt{s_2}-\sqrt{s_1}{\mathbf i}\\ \sqrt{s_2}+\sqrt{s_1}{\mathbf i}&0\end{matrix}\right),
         \end{equation}
\begin{equation}\label{formula2}
  z_0\mapsto (s_1+s_2)\hbox{Id},\quad z'_1\mapsto \left(\begin{matrix}s_1s_2+\sqrt{s_1s_2}{\mathbf i}&0\\ 0&s_1s_2-\sqrt{s_1s_2}{\mathbf i}\end{matrix}\right).
\end{equation}
 Assume that $s_1=-s_2$. If  $s_1,s_2=0$ then $V(\mathbf s)$ is
isomorphic to $\mathbb C\oplus \Pi\mathbb C$, where $\mathbb C$ is the trivial module. If $s_1\neq 0$, we choose
$\sqrt{s_1},\sqrt{s_2}$ so that
$\sqrt{s_2}=\sqrt{s_1}\mathbf{i}$. Note that the choice of sign controls the choice
of the parity of $V(\mathbf s)$.
The following  exact sequence
easily follows from (\ref{formula1}) and (\ref{formula2}):
\begin{equation}\label{exact1}
0\to \Pi\Gamma_{-s^2_1+s_1}\to V(\mathbf s)\to \Gamma_{-s^2_1-s_1}\to 0 ,
\end{equation}
where $\Gamma_t$ is the simple module of dimension $(1|0)$ on which $\phi_0,\phi_1$ and $z_0$ act by zero and $z'_1$ acts by the scalar $t$.
The sequence splits only in the case $s_1=0$, when $\Gamma_0\simeq\mathbb C$ is trivial.
Thus, using Proposition \ref{subquotient}, Theorem \ref{regular}, and (\ref{exact1}) we obtain

\begin{lemma}\label{n=2} If $n=2$, then every simple $W$-module is isomorphic to one of the following
  \begin{enumerate}
  \item $V(s_1,s_2)$ or $\Pi V(s_1,s_2)$ for $s_1\neq -s_2$, $s_1,s_2\neq 0$;
  \item $V(s,0)$ if $s\neq 0$;
    \item $\Gamma_t$ or $\Pi\Gamma_t$.
\end{enumerate}
\end{lemma}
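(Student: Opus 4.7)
The plan is to apply Proposition \ref{subquotient} to reduce the classification to identifying the simple $W$-constituents of the modules $V(\mathbf s)$ as $\mathbf s = (s_1, s_2)$ ranges over $\mathbb C^2$, and then to split the analysis into the typical case $s_1 + s_2 \neq 0$ (handled by Theorem \ref{regular}) and the atypical case $s_1 + s_2 = 0$ (handled via the explicit formulas (\ref{formula1})--(\ref{formula2}) and the short exact sequence (\ref{exact1})).

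For the typical case, Theorem \ref{regular} shows that $V(\mathbf s)$ is already simple over $W$. When both $s_i \neq 0$, $V(\mathbf s)$ is of M-type and so $V(\mathbf s)$ and $\Pi V(\mathbf s)$ are non-isomorphic; this yields case (1). When exactly one $s_i = 0$, $V(\mathbf s)$ is of Q-type and hence isomorphic to its parity shift; moreover, evaluating the $W$-generators $\phi_0, \phi_1, z_0, z'_1$ on $V(0,s)$ and on $V(s,0)$ via (\ref{formula1})--(\ref{formula2}) shows in both cases that $\phi_1 = 0$, $z'_1 = 0$, $z_0$ acts by $s\cdot\mathrm{Id}$, and $\phi_0^2$ acts by $s\cdot\mathrm{Id}$, so the two restrictions are isomorphic as $W$-modules. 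This yields case (2).

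For the atypical case $s_1 = -s_2$, I would first observe that if $s_1 = s_2 = 0$ then by (\ref{formula1})--(\ref{formula2}) all of $\phi_0, \phi_1, z_0, z'_1$ act as zero on $V(0,0)$, so $V(0,0) \cong \Gamma_0 \oplus \Pi \Gamma_0$. If $s_1 \neq 0$ I would choose $\sqrt{s_2} = \sqrt{s_1}\mathbf i$; then the top-right entries in (\ref{formula1}) vanish while the bottom-left entries do not, so $\phi_0$ and $\phi_1$ become strictly triangular on the chosen basis, while $z'_1$ becomes diagonal with the two scalars $-s_1^2 \pm s_1$. This produces the exact sequence (\ref{exact1}) with constituents $\Gamma_{-s_1^2 - s_1}$ and $\Pi \Gamma_{-s_1^2 + s_1}$. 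To conclude that every $\Gamma_t$ and every $\Pi \Gamma_t$ for $t \in \mathbb C$ really occurs, it suffices to note that the maps $s_1 \mapsto -s_1^2 \pm s_1$ from $\mathbb C$ to $\mathbb C$ are surjective, so every value of $t$ is attained.

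The most delicate point is the non-splitness of (\ref{exact1}) when $s_1 \neq 0$; this is needed to guarantee that $V(\mathbf s)$ actually produces two distinct simple $W$-constituents rather than remaining indecomposable-but-indistinguishable. Non-splitness is forced because after the choice $\sqrt{s_2} = \sqrt{s_1}\mathbf i$ one of the off-diagonal entries of $\phi_0$ equals $2\sqrt{s_1} \neq 0$, so $\phi_0$ acts nontrivially and therefore cannot act as zero on both summands of a putative direct sum decomposition. Once this is established, items (1)--(3) of the lemma follow immediately by combining Proposition \ref{subquotient} with the case analysis above.
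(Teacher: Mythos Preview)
Your proof is correct and follows the paper's approach: reduce via Proposition~\ref{subquotient} to composition factors of $V(\mathbf s)$, invoke Theorem~\ref{regular} for typical $\mathbf s$, and read off the atypical factors from~(\ref{exact1}). The verification that $V(0,s)\cong V(s,0)$ as $W$-modules is a useful detail the paper leaves implicit.

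However, your claim that non-splitness of~(\ref{exact1}) is ``the most delicate point'' and is ``needed'' for the lemma is mistaken: the classification of simple modules depends only on the composition factors of $V(\mathbf s)$, and these are already identified by the short exact sequence regardless of whether it splits. The paper records non-splitness separately, but it plays no role in the proof of this lemma. Likewise, the surjectivity of $s_1\mapsto -s_1^2\pm s_1$ is not required here, since the lemma only asserts that every simple module appears on the list, not that every $\Gamma_t$ arises as a constituent of some $V(\mathbf s)$.
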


\subsection{Invariance under permutations}

\begin{theorem} \label{symmetry} Let $\mathbf s'=\sigma (\mathbf s)$ for
   some permutation of coordinates.

   \noindent
   (1) If $\mathbf s$ is typical, then $V(\mathbf s)$ is isomorphic to $V(\mathbf s')$ as a $W$-module.

   \noindent
   (2) If $\mathbf s$ is arbitrary, then $[V(\mathbf s)]=[V(\mathbf s')]$ or $[\Pi V(\mathbf s')]$, where $[X]$ denotes the class of $X$ in the Grothendieck group.

   \noindent

       \end{theorem}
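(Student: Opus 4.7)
The plan is to reduce from general permutations to adjacent transpositions, then from $n$ to $n=2$ via the tensor product decomposition of Corollary \ref{cor:insert}, and finally to handle $n=2$ directly using the classification in Lemma \ref{n=2} together with the explicit formulas (\ref{formula1})--(\ref{formula2}). Since $S_n$ is generated by adjacent transpositions $(k,k+1)$ and the conclusions of both (1) and (2) are preserved under composition of $\sigma$'s, it suffices to treat a single $\sigma=(k,k+1)$.

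For that single transposition, I would partition the indices as $\{1,\dots,k-1\}\sqcup\{k,k+1\}\sqcup\{k+2,\dots,n\}$. By (\ref{tensprod}),
\[
V(\mathbf{s})\;\simeq\; V(s_1,\dots,s_{k-1})\boxtimes V(s_k,s_{k+1})\boxtimes V(s_{k+2},\dots,s_n),
\]
and similarly for $V(\mathbf{s}')$ with the middle factor becoming $V(s_{k+1},s_k)$. Via the embedding $W\hookrightarrow W^{k-1}\otimes W^2\otimes W^{n-k-1}$ furnished by Corollary \ref{cor:insert} these isomorphisms persist as $W$-modules, and since the outer factors are identical and $\boxtimes$ is exact, it reduces the problem to the corresponding statement for the middle block as a $W^2$-module: $V(s_k,s_{k+1})\simeq V(s_{k+1},s_k)$ in the typical case (part (1)), and agreement up to parity in the Grothendieck group in general (part (2)).

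For $n=2$ I would case-split along Lemma \ref{n=2}. In the typical case with both $s_i\neq 0$, I would construct an explicit even intertwiner $\operatorname{diag}(a,b)\colon V(s_1,s_2)\to V(s_2,s_1)$ by solving the compatibility equations for $\phi_0$ and $\phi_1$ from (\ref{formula1}); these reduce to the single condition $a/b=(\sqrt{s_2}+\sqrt{s_1}\mathbf{i})/(\sqrt{s_1}+\sqrt{s_2}\mathbf{i})$, which is well defined precisely because $s_1+s_2\neq 0$. In the typical case with exactly one coordinate zero, both $V(s_1,0)$ and $V(0,s_1)$ are simple Q-type modules on which $\phi_1$ and $z'_1$ act by zero and $\phi_0^2$ acts by the scalar $s_1$, so they are forced to be isomorphic. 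For the remaining non-typical case $s_2=-s_1$ relevant to part (2), applying (\ref{exact1}) to both orderings shows that replacing $s_1$ by $-s_1$ interchanges the roles of $\Gamma_{-s_1^2+s_1}$ and $\Gamma_{-s_1^2-s_1}$ as submodule and quotient while simultaneously flipping their parities, yielding $[V(s_1,-s_1)]=[\Pi V(-s_1,s_1)]$ in the Grothendieck group.

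The main obstacle is the $n=2$ typical case, specifically verifying that the intertwiner can be chosen \emph{even} rather than odd---an odd intertwiner would only give $V(\mathbf{s})\simeq \Pi V(\mathbf{s}')$ and would fall short of part (1). This requires fixing compatible square-root branches in (\ref{formula1}) and checking that the two compatibility relations (one from $\phi_0$, one from $\phi_1$) are simultaneously solvable, which ultimately reduces to the identity $(\sqrt{s_1}+\sqrt{s_2}\mathbf{i})(\sqrt{s_1}-\sqrt{s_2}\mathbf{i})=s_1+s_2=(\sqrt{s_2}+\sqrt{s_1}\mathbf{i})(\sqrt{s_2}-\sqrt{s_1}\mathbf{i})$.
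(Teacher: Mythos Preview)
Your proposal is correct and follows essentially the same route as the paper: reduce to an adjacent transposition, use Corollary~\ref{cor:insert} and the tensor decomposition (\ref{tensprod}) to isolate the middle $W^2$-block, and settle the $n=2$ case directly. The paper writes down the even intertwiner explicitly as $D=\operatorname{diag}(\sqrt{s_2}+\sqrt{s_1}\mathbf{i},\,\sqrt{s_1}+\sqrt{s_2}\mathbf{i})$ and verifies $D\phi_iD^{-1}$ by hand, which is exactly your ratio condition; note that this $D$ already covers the typical subcase with one coordinate zero (the entries of $D$ remain nonzero since $s_1+s_2\neq 0$), so your separate treatment of $V(s_1,0)\simeq V(0,s_1)$ is correct but not needed.
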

       \begin{proof} First, we will prove the statement for $n=2$.
  Assume first that $s_2\neq -s_1$.
  In this case $V(s_1,s_2)$ is a $(1|1)$-dimensional simple $W$-module.

Let
         $$D=\left(\begin{matrix}\sqrt{s_2}+\sqrt{s_1}{\mathbf i}&0\\
         0&\sqrt{s_1}+\sqrt{s_2}{\mathbf i}\end{matrix}\right).$$
         Then by direct computation we have
         $$D\phi_0D^{-1}=\left(\begin{matrix}0&\sqrt{s_2}+\sqrt{s_1}{\mathbf
         i}\\ \sqrt{s_2}-\sqrt{s_1}{\mathbf i}&0\end{matrix}\right)$$
         and
        $$D\phi_1D^{-1}=\sqrt{s_1s_2}\left(\begin{matrix}0&\sqrt{s_1}-\sqrt{s_2}{\mathbf     i}\\ \sqrt{s_1}+\sqrt{s_2}{\mathbf i}&0\end{matrix}\right).$$
        Therefore $D$ defines an isomorphism between $V(s_1,s_2)$ and $V(s_2,s_1)$.

        Now consider the case $s_1=-s_2$. Then the structure of $V(s_1,-s_1)$ is given by the sequence (\ref{exact1}). Let $V(\mathbf s') = V(-s_1, s_1)$,
        then analogously we have
the exact sequence
\begin{equation}\label{exact2}
0\to \Pi\Gamma_{-s^2_1-s_1}\to V(\mathbf s')\to \Gamma_{-s^2_1+s_1}\to 0 .
 \end{equation}
The statement (2) now follows directly from comparison of (\ref{exact1}) and (\ref{exact2}).
         Now we will prove the statement for all $n$. Note that it
         suffices to consider the case of the adjacent
         transposition $\sigma=(i,i+1)$.

         The embedding of $Q(i-1)\oplus Q(2)\oplus Q(n-i-1)$ into $Q(n)$ provides the isomorphism
         $$U(\h)\simeq U(\h^-)\otimes U(\h^0)\otimes U(\h^+),$$
         where $\h^-$, $\h^0$ and $\h^+$ are the Cartan subalgebras of
         $Q(i-1), Q(2)$ and $Q(n-i-1)$ respectively.
Using twice the isomorphism (\ref{tensprod}) we obtain the following isomorphism of $U(\h)$-modules
         $$ V(\mathbf s)\simeq (V(s_1,\dots,s_{i-1})\boxtimes V(s_i,s_{i+1}))\boxtimes V(s_{i+2},\dots,s_n).$$

         Suppose that $s_i\not=-s_{i+1}$.
Let $D_{i,i+1}=1\otimes D\otimes 1$. By Corollary \ref{cor:insert} we have that $W$ is a subalgebra in $W^{i-1}\otimes W^2\otimes W^{n-i-1}$ and hence $D_{i,i+1}$
defines an isomorphism of $W$-modules  $V(\mathbf s)$ and $V(\mathbf s')$.

If $s_i =-s_{i+1}$, then the statement follows from  (\ref{exact1})
and (\ref{exact2}).
This completes the proof  of the theorem.

      \end{proof}

\subsection{Construction of simple $W$-modules}
Now we give a general construction of a simple $W$-module.
Let $r, p,q\in\mathbb N$ and $r+2p+q=n$, $\mathbf t=(t_1,\dots,t_p)\in\mathbb C^p$, $t_1,\dots, t_p\neq 0$,
and $\lambda=(\lambda_1,\dots,\lambda_q)\in\mathbb C^q$,
$\lambda_1,\dots, \lambda_q\neq 0$, such that
$\lambda_i+\lambda_j\neq 0$ for any $1\leq i\not= j\leq q$.
Recall that by Corollary \ref{cor:insert} we have an embedding $W\hookrightarrow W^r\otimes (W^2)^{\otimes p}\otimes W^q$. Set
$$S(\mathbf t,\lambda):=\mathbb C\boxtimes \Gamma_{t_1}\boxtimes\dots\boxtimes\Gamma_{t_p}\boxtimes V(\lambda),$$
where the first term $\mathbb C$ in the tensor product denotes the trivial $W^r$-module.
For $q=0$ we use the notation $S(\mathbf t,0)$ and set $V(\lambda) = \mathbb C$.

\begin{remark} The dimension of $S(\mathbf t,\lambda)$ equals $2^{\frac{q}{2}}$ for even $q$ and  $2^{\frac{q+1}{2}}$ for odd $q$. Furthermore, $S(\mathbf t,\lambda)$
  is isomorphic to $\Pi S(\mathbf t,\lambda)$ if and only if $q$ is odd.
  \end{remark}
\begin{lemma}\label{one-dim} All $u_k(1)$ act by zero on $S(\mathbf t,0)$. The action of $u_k(0)$ is given by the formula
  $$u_k(0)=\begin{cases} 0\,\,\text{for odd}\,\,k,\,\,\text{and for}\,\,k>2p,\\ \sigma_{\frac{k}{2}}(t_1,\dots,t_p)\,\,\text{for even}\,\,k,
  \end{cases}$$
     where $\sigma_a$ denote the elementary symmetric polynomials, $0\leq a\leq p$.
\end{lemma}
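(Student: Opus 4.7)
The plan is to iterate the decomposition formula (\ref{decomposition}) of Lemma \ref{insert} along the chain of embeddings
\[W \hookrightarrow W^{r}\otimes (W^{2})^{\otimes p}\]
provided by Corollary \ref{cor:insert} applied to $(i_1,\dots,i_{p+1})=(r,2,\dots,2)$. Iterating once per copy of $W^{2}$ expresses
\[u_k(d)=\sum (-1)^{\varepsilon}\,u^{(0)}_{a_0}(e_0)\,u^{(1)}_{a_1}(e_1)\cdots u^{(p)}_{a_p}(e_p),\]
where $u^{(0)}_{a}(e)\in W^{r}$, $u^{(i)}_{a}(e)\in W^{2}$ for $i\geq 1$, the sum ranges over $a_0+\cdots+a_p=k$ and $e_0+\cdots+e_p\equiv d\pmod{2}$, and $\varepsilon$ accumulates the signs $(-1)^{eb}$ from the single-step formula (\ref{decomposition}).

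Next I would identify which of these tensor terms survive on $S(\mathbf t,0)=\mathbb{C}\boxtimes\Gamma_{t_1}\boxtimes\cdots\boxtimes\Gamma_{t_p}$. On the trivial $W^{r}$-module, setting $x_i=\xi_i=0$ in (\ref{gen1}) shows that $u^{(0)}_{a_0}(e_0)$ vanishes unless $(a_0,e_0)=(0,0)$, in which case it acts as $1$. For each $\Gamma_{t_i}$, the convention $u_k=0$ for $k>2$ restricts $a_i$ to $\{0,1,2\}$. A direct expansion of (\ref{gen1}) for $n=2$ yields $u_1(0)=z_0$, $u_1(1)=\phi_0$, $u_2(0)=z'_1$ and $u_2(1)=-\phi_1$; since $z_0,\phi_0,\phi_1$ act by zero on $\Gamma_{t_i}$ while $z'_1$ acts by $t_i$, only $(a_i,e_i)\in\{(0,0),(2,0)\}$ survive, contributing $1$ and $t_i$ respectively.

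Combining these two reductions, every surviving summand satisfies $e_i=0$ for all $i$, which forces $d=0$ and already proves $u_k(1)=0$ on $S(\mathbf t,0)$; moreover each sign $(-1)^{\varepsilon}$ is then $+1$, so no cancellation occurs. What remains is parameterized by the subset $J=\{i\geq 1:a_i=2\}$ subject to $k=2|J|$, so $u_k(0)$ acts by zero unless $k$ is even and $k\leq 2p$, and otherwise
\[u_k(0)=\sum_{|J|=k/2}\prod_{j\in J}t_j=\sigma_{k/2}(t_1,\dots,t_p).\]
The only step requiring any explicit computation is the identification $u_2(1)=-\phi_1$ in $W^{2}$, which is a two-term expansion of (\ref{gen1}); everything else is bookkeeping of the iterated formula (\ref{decomposition}) and of the trivial signs.
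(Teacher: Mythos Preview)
Your proof is correct and follows essentially the same approach as the paper: both use the decomposition formula (\ref{decomposition}) together with the explicit action of the $W^{2}$-generators on $\Gamma_{t}$. The only cosmetic difference is that the paper phrases this as an induction on $p$, peeling off one $W^{2}$-factor at a time via $Q(n-2)\oplus Q(2)\hookrightarrow Q(n)$ and the identity $\sigma_{k/2}(t_1,\dots,t_p)=\sigma_{k/2}(t_1,\dots,t_{p-1})+t_p\sigma_{k/2-1}(t_1,\dots,t_{p-1})$, whereas you unroll the iteration in one pass and read off the elementary symmetric polynomial directly from the subset parametrization.
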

\begin{proof} The first assertion is trivial. We prove the second assertion by induction on $p$. For $p=1$ it is a consequence of the definition of $\Gamma_t$
  for $Q(2)$. For $p>1$ we consider the embedding $Q(n-2)\oplus Q(2)\hookrightarrow Q(n)$.
 The formula (\ref{decomposition}) degenerates to
  $$u_k(0)=u_k^+(0)\otimes 1+u_{k-1}^+(0)\otimes z_0+u_{k-2}^+(0)\otimes z'_1.$$
  As $z_0$ acts by zero on $\Gamma_{t_p}$ the statement now follows from the obvious identity
  $$\sigma_{\frac{k}{2}}(t_1,\dots t_p)=\sigma_{\frac{k}{2}}(t_1,\dots t_{p-1})+t_p\sigma_{\frac{k}{2}-1}(t_1,\dots t_{p-1}).$$
  \end{proof}

\begin{theorem}\label{irreducible}
  \begin{enumerate}
  \item $S(\mathbf t,\lambda)$ is a simple $W$-module;
  \item Every simple $W$-module is isomorphic to
  $S(\mathbf t,\lambda)$ up to change of parity.
\end{enumerate}
\begin{proof} Let $u^-_k(d)$, $d\in\mathbb Z/2\mathbb Z,\,1\leq k\leq n$ be as in (\ref{gen1-}) where indices are taken in the
  interval $[n-q+1,n]$. If $q=0$ we set $u^-_k(0)=1$ and $u^-_k(1)=0$.
Using Lemma \ref{one-dim} and formula (\ref{decomposition})
  we can easily write the action of  $u_k(d)$ in
  $S(\mathbf t,\lambda)$ in terms of $u^-_k(d)$ after
  identifying $S(\mathbf t,\lambda)$ with $V(\lambda)$:
  \begin{equation}\label{simpleaction}
  u_k(d)=\sum_{2a+j=k} \sigma_{a}(t_1,\dots,t_p)u_j^-(d),
  \end{equation}

 From these
  formulas we see that $u^-_k(d)$ and $u_k(d)$ generate the same
  subalgebra in $\operatorname{End}_{\mathbb C}(V(\lambda))$. By
  Theorem \ref{regular} this proves irreducibility of $S(\mathbf t,\lambda)$.

To show (2) we use Proposition \ref{subquotient}. Every simple
$W$-module is a subquotient of $V(\mathbf s)$. By Theorem \ref{symmetry} (2)
we may assume that $s_1=\dots  = s_r=0$, $s_i\neq 0$ for $i>r$,
$s_{r+1}=-s_{r+2},\dots, s_{r+2p-1}=-s_{r+2p}$. We can compute
$W^r\otimes (W^2)^{\otimes p}\otimes W^q$-simple
constituents of $V(\mathbf s)$. They are
$S(\mathbf t,\lambda)$ (up to change of parity) with $t_j=-s_{r+2j}^2\pm s_{r+2j}$ and $\lambda_i=s_{r+2p+i}$
(we can assume that all $s_i\not= \pm 1$).
    By (1) $S(\mathbf t,\lambda)$ remains simple when
    restricted to $W$. Hence the statement.
    \end{proof}
  \end{theorem}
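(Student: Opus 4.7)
The plan is to prove both statements by exploiting the embedding $W\hookrightarrow W^r\otimes (W^2)^{\otimes p}\otimes W^q$ of Corollary \ref{cor:insert} and reducing simplicity to the typical case already handled by Theorem \ref{regular}. The whole strategy rests on getting a usable formula for how the generators $u_k(d)$ of $W$ act on $S(\mathbf t,\lambda)$ after we identify this module, as a vector space, with the last tensor factor $V(\lambda)$.

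For part (1), I would first make explicit how the $u_k(d)$ act on $S(\mathbf t,\lambda)$. The trivial $W^r$-module annihilates every $u_k^+(d)$ with $k>0$, and by Lemma \ref{one-dim} each factor $\Gamma_{t_j}$ kills all odd generators $u_k^+(1)$ and realizes $u_k^+(0)$ by an elementary symmetric polynomial in the $t_j$'s. Iterating the decomposition formula (\ref{decomposition}) across the $r+p+1$ tensor factors, the whole expression should collapse to
\begin{equation*}
u_k(d)=\sum_{2a+j=k}\sigma_a(t_1,\dots,t_p)\,u_j^-(d),
\end{equation*}
where the $u_j^-(d)$ are the $W^q$-generators acting on $V(\lambda)$. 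Since $\sigma_0=1$, this relation is upper triangular in $k$ with invertible leading coefficient, so the images of $\{u_k(d)\}$ and $\{u_k^-(d)\}$ generate the same subalgebra of $\operatorname{End}_{\mathbb C}(V(\lambda))$. Because $\lambda$ is typical, $V(\lambda)$ is already $W^q$-simple by Theorem \ref{regular}, whence $S(\mathbf t,\lambda)$ is $W$-simple.

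For part (2), let $S$ be any simple $W$-module. By Proposition \ref{subquotient} it is a subquotient of some $V(\mathbf s)$. Using Theorem \ref{symmetry}(2), after permuting coordinates and possibly applying $\Pi$, I may assume $\mathbf s$ has the normal form $s_1=\dots=s_r=0$, the next $2p$ coordinates split into pairs $s_{r+2j-1}=-s_{r+2j}\neq 0$, and the last $q$ coordinates form a typical tuple $\lambda=(\lambda_1,\dots,\lambda_q)$ (discarding a finite set of coincidences one may also force $s_i\neq \pm 1$). Applying (\ref{tp}) and (\ref{tensprod}) iteratively, $V(\mathbf s)$ becomes a $\boxtimes$-product of the trivial $W^r$-module, $p$ two-dimensional $Q(2)$-modules, and $V(\lambda)$. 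On each two-dimensional piece the $W^2$-structure is the non-split extension (\ref{exact1}), whose composition factors are $\Gamma_{t_j}$ with $t_j=-s_{r+2j-1}^2\pm s_{r+2j-1}$. Thus the $W^r\otimes (W^2)^{\otimes p}\otimes W^q$-simple constituents of $V(\mathbf s)$ are exactly the modules $S(\mathbf t,\lambda)$ (up to a parity shift), and by part (1) each of them remains simple on restriction to $W$. Hence $S\cong S(\mathbf t,\lambda)$ or $\Pi S(\mathbf t,\lambda)$.

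The main technical obstacle is the bookkeeping behind formula (\ref{simpleaction}): one has to iterate (\ref{decomposition}) across three distinct groups of tensor factors, keep track of the signs $(-1)^{eb}$ and the parities, verify that the $W^r$-contributions reduce to $\delta_{k,0}$ on the trivial module, and confirm that the $p$ copies of $Q(2)$ assemble into precisely the elementary symmetric polynomials in the $t_j$'s via Lemma \ref{one-dim}. Once this formula is in hand, both the simplicity assertion (via Theorem \ref{regular}) and the enumeration of simple constituents of $V(\mathbf s)$ fall out immediately.
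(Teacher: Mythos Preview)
Your proposal is correct and follows essentially the same route as the paper: you derive the triangular action formula $u_k(d)=\sum_{2a+j=k}\sigma_a(\mathbf t)\,u_j^-(d)$ from Lemma \ref{one-dim} and (\ref{decomposition}), invoke Theorem \ref{regular} for simplicity, and for part (2) use Proposition \ref{subquotient}, Theorem \ref{symmetry}(2), and the exact sequence (\ref{exact1}) to identify the constituents of $V(\mathbf s)$ in its normal form. The only cosmetic differences are that you spell out the upper-triangularity argument and the iteration of (\ref{decomposition}) a bit more explicitly than the paper does.
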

  \begin{remark}
  $\Gamma_{0}\simeq \mathbb C \boxtimes \mathbb C$ as $W^2$-modules ($r =2$, $p = q = 0$).
  \end{remark}

    \subsection{Central characters} Recall that the center of $U(Q(n))$ coincides with the center $Z$ of $W$, see Section 2.
    Every $\mathbf s$ defines the central character
    $\chi_{\mathbf s}:Z\to\mathbb C$. Furthermore, Theorem \ref{irreducible} (2) implies that every simple $W$-module admits central character $\chi_{\mathbf s}$ for
    some $\mathbf s$.
    For every $\mathbf s=(s_1,\dots,s_n)$ we define the {\it core} $c(\mathbf s)=(s_{i_1},\dots,s_{i_m})$ as a subsequence obtained
    from $\mathbf s$ by removing all $s_j=0$ and all pairs $(s_i,s_j)$ such that $s_i+s_j=0$. Up to a  permutation this result does not depend on the order of
    removing. Thus, the core is well defined up to permutation. We call $m$ the length of the core. The notion of core is very useful for describing the blocks in the
    category of finite-dimensional $Q(n)$-modules, see \cite{Pen} and \cite{Ser}.
    \begin{example} Let $\mathbf s=(1,0,3,-1,-1)$, then $c(\mathbf s)=(3,-1)$.
      \end{example}

    The following is a reformulation of the central character description in \cite{S}.
    \begin{lemma}\label{cent} Let $\mathbf s,\mathbf s'\in\mathbb C^n$. Then $\chi_{\mathbf s}=\chi_{\mathbf s'}$ if and only if
      $\mathbf s$ and $\mathbf s'$ have the same core (up to permutation).
    \end{lemma}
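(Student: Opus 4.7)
My plan hinges on the identification, recalled in Section 2, of $Z$ with the subalgebra of $\mathbb C[x_1,\dots,x_n]$ generated by the $Q$-symmetric power sums $p_k=x_1^{2k+1}+\dots+x_n^{2k+1}$, $k\geq 0$. Under this identification the central character $\chi_{\mathbf s}$ acts by $p_k\mapsto s_1^{2k+1}+\dots+s_n^{2k+1}$, so the whole lemma reduces to the combinatorial statement that two tuples $\mathbf s,\mathbf s'\in\mathbb C^n$ share every odd power sum if and only if they share the same core up to permutation.

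The direction ($\Leftarrow$) is a direct check. Each elementary core-reduction step — removing an entry $s_j=0$, or removing a pair $\{s_i,s_j\}$ with $s_i+s_j=0$ — preserves every $\sum_i s_i^{2k+1}$, in the second case because $s_i^{2k+1}+(-s_i)^{2k+1}=0$. Iterating gives $\chi_{\mathbf s}=\chi_{c(\mathbf s)}$, so any two tuples with the same core have the same central character.

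For ($\Rightarrow$), I would first use the easy direction to replace $\mathbf s,\mathbf s'$ by their cores $\mathbf a=(a_1,\dots,a_m)$ and $\mathbf b=(b_1,\dots,b_l)$, and then package the equality of all odd power sums into the single rational-function identity
\[
F_{\mathbf a}(t):=\sum_{i=1}^m\frac{a_it}{1-a_i^2t^2}=\frac{1}{2}\sum_{i=1}^m\left(\frac{1}{1-a_it}-\frac{1}{1+a_it}\right)=F_{\mathbf b}(t).
\]
The strategy is then to recover the multiset $\{a_1,\dots,a_m\}$ from $F_{\mathbf a}$, which will force $\mathbf a=\mathbf b$ as multisets.

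The only delicate step is the pole analysis that establishes this recoverability, and it is precisely here that both core conditions are used. After grouping the $a_i$ by distinct values $\alpha_1,\dots,\alpha_r$ with multiplicities $\mu_1,\dots,\mu_r$, the partial-fraction expression must be shown to be in lowest terms: the $2r$ candidate poles $\pm 1/\alpha_j$ are pairwise distinct, since a coincidence between two of them would force either $\alpha_j=\alpha_{j'}$ (excluded by distinctness) or $\alpha_j+\alpha_{j'}=0$ (excluded by the no-opposite-pair condition for cores), while the no-zero condition ensures that each $\pm 1/\alpha_j$ is actually finite. Reading off the residue at $t=1/\alpha_j$ then gives $-\mu_j/(2\alpha_j)$, so both the $\alpha_j$ and the $\mu_j$ are extracted from $F_{\mathbf a}$. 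Applying the same reconstruction to $F_{\mathbf b}$ and using $F_{\mathbf a}=F_{\mathbf b}$ completes the proof. Beyond this partial-fraction argument, everything else is bookkeeping.
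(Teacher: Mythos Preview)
Your argument is correct. The paper does not actually prove this lemma: it is stated as ``a reformulation of the central character description in \cite{S}'' and left at that. So you have supplied a self-contained proof where the authors defer to Sergeev's original computation of the center.

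Your generating-function/partial-fraction approach is clean. One small point worth making explicit in the recovery step: from the pole set alone you only see the unordered pairs $\{1/\alpha_j,-1/\alpha_j\}$, so a priori each $\alpha_j$ is determined only up to sign. The residue fixes this ambiguity, because the residue of $F_{\mathbf a}$ at $t_0$ equals $-\mu_j/(2\alpha_j)$ regardless of whether $t_0=1/\alpha_j$ or $t_0=-1/\alpha_j$; comparing with the corresponding residue of $F_{\mathbf b}$ at the same pole, the possibility $\alpha_j=-\beta_k$ would force $\mu_j=-\nu_k$, which is impossible since multiplicities are positive. Hence $\alpha_j=\beta_k$ and $\mu_j=\nu_k$. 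You clearly have this in mind, but spelling it out removes any doubt that the sign of $\alpha_j$ is genuinely recovered.
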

    It follows from Lemma \ref{cent} that the core depends only on the central character $\chi_{\mathbf s}$, we denote it $c(\chi)$. By Theorem \ref{symmetry}
    we obtain the following.
    \begin{corollary}\label{corerep} Let $\chi:Z\to\mathbb C$ be a central character with core $c(\chi)$ of length $m$. Then $W^m$-module
      $V(c(\chi))$ is well-defined.
      From now on we denote it by $V(\chi)$ and call it the {\it core representation}.
    \end{corollary}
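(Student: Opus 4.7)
The plan is to observe that the core $c(\chi)$, by its very construction, is a typical tuple in $\mathbb C^m$, so the two earlier theorems about typical parameters do all the work. First I would unpack the definition: since we removed every zero entry and every pair $(s_i,s_j)$ with $s_i+s_j=0$, the resulting sequence $c(\chi)=(s_{i_1},\dots,s_{i_m})$ satisfies $s_{i_k}\neq 0$ for all $k$ and $s_{i_k}+s_{i_\ell}\neq 0$ for all $k\neq \ell$. In other words, by the definitions at the beginning of Section 4, $c(\chi)$ is both regular and typical as an element of $\mathbb C^m$.

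Next, by Lemma \ref{cent}, the core is determined by the central character $\chi$ up to permutation of its coordinates (and by that lemma alone this is the only potential ambiguity). So there are really two things to verify: that $V(c(\chi))$ is a genuine $W^m$-module (not just a $U(\h_m)$-module that might break up after restriction), and that different orderings of the core yield isomorphic $W^m$-modules.

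For the first point I would invoke Theorem \ref{regular}: since $c(\chi)$ is typical, $V(c(\chi))$ remains irreducible, hence well-defined as a simple $W^m$-module, after restriction from $U(\h_m)$. For the second point I would invoke Theorem \ref{symmetry} (1): since $c(\chi)$ is typical, for any permutation $\sigma$ we have an isomorphism $V(c(\chi))\simeq V(\sigma(c(\chi)))$ of $W^m$-modules, so the isomorphism class is independent of the chosen ordering. Together these give a $W^m$-module $V(\chi)$ depending only on $\chi$.

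There is essentially no obstacle here; the corollary is a direct packaging of Lemma \ref{cent} with the typical case of Theorems \ref{regular} and \ref{symmetry}. The only mildly subtle point worth flagging explicitly is that one should check \emph{both} irreducibility (from Theorem \ref{regular}) \emph{and} permutation invariance (from Theorem \ref{symmetry} (1)) — without the former one would not even have a well-defined simple module to name, and without the latter the label $V(\chi)$ would still depend on an ordering.
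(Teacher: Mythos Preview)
Your proposal is correct and matches the paper's approach: the paper simply states ``By Theorem \ref{symmetry} we obtain the following'' before the corollary, and your argument unpacks exactly this, noting that the core is typical so that part (1) of Theorem \ref{symmetry} applies. One small remark: the restriction of a $U(\h_m)$-module to $W^m$ is automatically a $W^m$-module, so Theorem \ref{regular} is not strictly needed for well-definedness (only Theorem \ref{symmetry}(1) is), though irreducibility is of course useful to record.
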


    The category $W-\mathrm{mod}$ of finite dimensional $W$-modules
    decomposes into direct sum $\bigoplus W^\chi-\mathrm{mod}$, where $ W^\chi-\mathrm{mod}$ is the full subcategory of modules admitting generalized central
    character $\chi$.

    \begin{lemma}\label{simplcent} A simple $W$-module $S$ belongs to $ W^\chi-\mathrm{mod}$ if and only if it is isomorphic  (up to change of parity)
      to $S(\mathbf t,\lambda)$
      with $\lambda=c(\chi)$.
    \end{lemma}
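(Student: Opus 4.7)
The plan is to compute the central character of each $S(\mathbf{t},\lambda)$, show it depends only on $\lambda$, and combine this with Theorem \ref{irreducible}(2) to conclude both directions.

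First I would realize $S(\mathbf{t},\lambda)$ as a $W$-subquotient of $V(\mathbf{s})$ for some $\mathbf{s}$ with core $\lambda$, reusing the construction from the proof of Theorem \ref{irreducible}(2). Concretely, set $s_1=\dots=s_r=0$; for each $j=1,\dots,p$ pick a nonzero $\alpha_j\in\mathbb{C}$ and a sign $\epsilon_j\in\{\pm 1\}$ satisfying $-\alpha_j^2+\epsilon_j\alpha_j=t_j$, and set $s_{r+2j-1}=-\alpha_j$, $s_{r+2j}=\alpha_j$; finally put $s_{r+2p+i}=\lambda_i$. Such $\alpha_j$ exist, with both sign choices realizable as $W^2$-constituents of $V(-\alpha_j,\alpha_j)$, by the exact sequences (\ref{exact1}) and (\ref{exact2}). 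Removing first the zeros and then each cancelling pair $(-\alpha_j,\alpha_j)$ leaves exactly $\lambda$ (typicality of $\lambda$ prevents any further cancellations), so $c(\mathbf{s})=\lambda$ up to permutation.

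Since $Z\subset W$ acts on $V(\mathbf{s})$ by the scalar $\chi_{\mathbf{s}}$, it acts by the same scalar on every subquotient, so $S(\mathbf{t},\lambda)$ admits central character $\chi_{\mathbf{s}}$. By Lemma \ref{cent} this character depends only on $c(\mathbf{s})=\lambda$, so $S(\mathbf{t},\lambda)\in W^\chi$-mod with $c(\chi)=\lambda$, establishing the ``if'' direction.

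For the converse, let $S\in W^\chi$-mod be simple. By Theorem \ref{irreducible}(2), $S\simeq S(\mathbf{t}',\lambda')$ up to change of parity for some $(\mathbf{t}',\lambda')$ with $\lambda'$ typical. The forward direction applied to this module gives a central character $\chi'$ with $c(\chi')=\lambda'$; since $S\in W^\chi$-mod we have $\chi'=\chi$, hence $\lambda'=c(\chi)$ up to permutation, which is immaterial by Theorem \ref{symmetry}(1) applied to the typical tuple $\lambda'$. The only mild obstacle is verifying the existence of the $\alpha_j$'s with both sign choices available, which is an elementary quadratic computation and requires no new input.
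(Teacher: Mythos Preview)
Your argument is correct, but it takes a different route from the paper's. The paper computes the central character of $S(\mathbf t,\lambda)$ directly: since $p_k=x_1^{2k+1}+\dots+x_n^{2k+1}$ decomposes additively under the tensor splitting $U(\h)\simeq U(\h_r)\otimes U(\h_2)^{\otimes p}\otimes U(\h_q)$, its value on $S(\mathbf t,\lambda)$ is the sum of contributions from each factor. On the trivial $W^r$-module the contribution is $0$; on each $\Gamma_{t_j}$ the corresponding central element of $W^2$ acts by $0$ (as $\Gamma_{t_j}$ is a subquotient of $V(s,-s)$); and on $V(\lambda)$ one gets $\lambda_1^{2k+1}+\dots+\lambda_q^{2k+1}$. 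Since the $p_k$ generate $Z$, this pins down the central character and yields the lemma in one line.

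Your approach instead recycles the construction from Theorem~\ref{irreducible}(2): you manufacture an explicit $\mathbf s$ with $c(\mathbf s)=\lambda$ such that $S(\mathbf t,\lambda)$ sits as a $W$-subquotient of $V(\mathbf s)$, and then read off the central character from $V(\mathbf s)$. This is a little longer and requires the auxiliary quadratic computation for the $\alpha_j$, but it has the virtue of not touching the individual tensor factors or the explicit form of $p_k$ at all; it only uses Lemma~\ref{cent} and the inheritance of central characters by subquotients. Both approaches arrive at the same place; the paper's is quicker because it exploits the particularly simple additive behavior of the power-sum generators $p_k$ across the decomposition.
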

    \begin{proof} We have to compute the central character of $S(\mathbf t,\lambda)$. For a $Q$-symmetric polynomial $p_k=x^{2k+1}_1+\dots+x^{2k+1}_n$ we have
      $p_k(\mathbf t,\lambda)=\lambda_1^{2k+1} + \dots + \lambda_q^{2k+1}$. Since $p_k$ generate the center of $W$ the statement follows.
      \end{proof}

      \begin{proposition}\label{classification} Two simple modules $S(\mathbf t,\lambda)$ and $S(\mathbf t',\lambda')$ are isomorphic (up to change of parity)
        if and only if $p'=p$, $q'=q$,
$\mathbf t'=\sigma(\mathbf t)$ and $\lambda'=\tau(\lambda)$ for some $\sigma\in S_p$ and $\tau\in S_q$.
      \end{proposition}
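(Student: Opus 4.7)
The plan has two parts. First, for the sufficiency direction, I will show that the $W$-module $S(\mathbf t,\lambda)$ depends only on the multisets underlying $\mathbf t$ and $\lambda$. Under the identification of $S(\mathbf t,\lambda)$ with $V(\lambda)$ used in (\ref{simpleaction}), each generator $u_k(d)$ of $W$ acts as $\sum_{2a+j=k}\sigma_a(\mathbf t)\,u_j^-(d)$, so it depends on $\mathbf t$ only through the elementary symmetric polynomials $\sigma_a(\mathbf t)$. Permuting the entries of $\lambda$ replaces $V(\lambda)$ by $V(\tau\lambda)$, which are isomorphic $W^q$-modules by Theorem~\ref{symmetry}(1) applied with $n$ replaced by $q$ (valid because $\lambda$ is typical); the operators $u_j^-(d)$ transport compatibly under this isomorphism. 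Since $\{u_k(d)\}$ generates $W$, this lifts to an isomorphism $S(\mathbf t,\lambda)\simeq S(\sigma\mathbf t,\tau\lambda)$.

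Second, for the necessity direction, suppose $S(\mathbf t,\lambda)\simeq S(\mathbf t',\lambda')$ up to parity. Both modules have the same central character $\chi$, so by Lemma~\ref{simplcent} each of $\lambda,\lambda'$ equals the core $c(\chi)$; hence $q=q'$ and $\lambda'=\tau\lambda$ for some $\tau\in S_q$. By sufficiency we may replace $(\mathbf t',\lambda')$ by $(\mathbf t',\lambda)$ and reduce to the case $\lambda=\lambda'$.

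To recover $\mathbf t$, I will compare trace generating functions. Set
\[
F(x):=\sum_{k\geq 0}\operatorname{tr}_{S}(u_k(0))\,x^k,\qquad G(x):=\sum_{j\geq 0}\operatorname{tr}_{V(\lambda)}(u_j^-(0))\,x^j.
\]
The ordinary trace of an even operator is invariant under module isomorphism and under parity shift, so $F(x)$ is the same series for $S(\mathbf t,\lambda)$ and $S(\mathbf t',\lambda)$. Formula (\ref{simpleaction}) gives $F(x)=T(x)\,G(x)$ with
\[
T(x):=\prod_{i=1}^p(1+t_i x^2),
\]
and similarly $F(x)=T'(x)\,G(x)$ with the same $G$. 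Since $G(0)=\dim V(\lambda)\neq 0$, the power series $G(x)$ is a unit in $\mathbb C[[x]]$, so $T(x)=T'(x)$. Matching the multisets of roots of this polynomial in $x^2$ yields $p=p'$ and $\mathbf t'=\sigma\mathbf t$ for some $\sigma\in S_p$.

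The step I expect to need the most care is the sufficiency direction, specifically verifying that the family $\{u_k(d):k\geq 1,\,d\in\mathbb Z/2\mathbb Z\}$ really generates $W$, so that matching their actions produces a genuine $W$-module isomorphism rather than only an isomorphism of generator-by-generator actions. This should follow from Lemma~\ref{structure} together with the formulas (\ref{decomposition}) and Lemma~\ref{one-dim}, but it requires careful combinatorial bookkeeping to rule out any hidden parity shift coming from reordering the $\Gamma_{t_i}$ factors.
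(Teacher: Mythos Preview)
Your proposal is correct and follows essentially the same route as the paper: both directions use Lemma~\ref{simplcent} to pin down $\lambda$ via the central character, then compare traces of the even generators $u_k(0)$ through formula~(\ref{simpleaction}), using that the constant term $\operatorname{tr}_{V(\lambda)}u_0^-(0)=\dim V(\lambda)$ is nonzero to force $\sigma_a(\mathbf t)=\sigma_a(\mathbf t')$ for all $a$. Your generating-function packaging $F(x)=T(x)G(x)$ is a clean reformulation of the paper's term-by-term recursion, but the content is identical; your worry about whether the $u_k(d)$ generate $W$ is already settled in Section~3 (they are introduced as generators from \cite{PS3}, Corollary~5.15), so no extra work is needed there.
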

      \begin{proof}  First, (\ref{simpleaction}) and Theorem \ref{symmetry} imply the ``if'' statement. To prove the ``only if'' statement, assume that
$S(\mathbf t,\mathbf \lambda)$ and $S(\mathbf t',\lambda')$ are isomorphic. Then these modules admit the same central character. Therefore
by Lemma \ref{simplcent} $\lambda'=\tau(\lambda)$ for some $\tau\in S_q$. Hence without loss of generality we may assume that $q'=q$ and $\lambda'=\lambda$.

Denote by
$\operatorname{tr}x$ and $\operatorname{tr}'x$ the trace of $x\in W$ in $S(\mathbf t,\lambda)$ and $S(\mathbf t',\lambda)$ respectively.
Then we must have
$$\operatorname{tr}u_k(0)=\operatorname{tr}'u_k(0).$$
Using the formula (\ref{simpleaction}) we get
$$ \operatorname{tr}u_k(0)=\sum_{2a+j=k} \sigma_{a}(t_1,\dots,t_p) \operatorname{tr}_{V(\lambda)} u_j^-(0),$$
$$ \operatorname{tr}'u_k(0)=\sum_{2a+j=k} \sigma_{a}(t'_1,\dots,t_{p'}') \operatorname{tr}_{V(\lambda)} u_j^-(0).$$
Let $b_j:=\operatorname{tr}_{V(\lambda)} u_j^-(0)$. Without loss of generality we may assume that $p\geq p'$. Then we can rewrite our formula with $p=p'$ assuming
$t_i'=0$ for $p\geq i>p'$. Then the above implies
$$\sigma_{a}(t_1,\dots,t_p)b_0+\sigma_{a-1}(t_1,\dots,t_p)b_2+\dots+\sigma_{0}(t_1,\dots,t_p)b_{2a}=$$
$$\sigma_{a}(t'_1,\dots,t'_p)b_0+\sigma_{a-1}(t'_1,\dots,t'_p)b_2+\dots+\sigma_{0}(t'_1,\dots,t'_p)b_{2a},$$
where we assume $b_i=0$ for $i>q$. Since $b_0=\dim V(\lambda)\neq 0$ the above equations imply $\sigma_a(t_1,\dots,t_p)=\sigma_a(t'_1,\dots,t_p')$ for all $a=1,\dots,p$.
Therefore $\mathbf t'=\sigma(\mathbf t)$ for some $\sigma\in S_p$ and in particular, $p'=p$.
      \end{proof}
    We denote  by $\mathcal P^l$ the subcategory of $W^l$-modules which admit trivial generalized central character.
    \begin{lemma} \label{functor}Let $\chi:Z\to\mathbb C$ be a central character with core $c(\chi)$ of length $m$. Then the functor
      $W^{n-m}-\mathrm{mod}\to  W-\mathrm{mod}$ defined by $F(M)=\operatorname{Res}_W(M\otimes V(\chi))$
      \footnote{We consider here the usual exterior tensor product in contrast with $\boxtimes$}  restricts to the functor
      $\Phi:\mathcal P^{n-m} \to W^\chi-\mathrm{mod}$.  Furthermore, $\Phi$ is an exact functor which sends a simple object to a simple object.
      \end{lemma}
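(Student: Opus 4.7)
The plan is to treat exactness separately from the two substantive claims, namely (i) that $F$ actually lands in $W^\chi-\mathrm{mod}$ when restricted to $\mathcal P^{n-m}$, and (ii) that $\Phi$ carries simples to simples. Exactness itself is immediate: tensoring with the fixed module $V(\chi)$ over $\mathbb C$ and restricting along the embedding $W\hookrightarrow W^{n-m}\otimes W^m$ from Corollary \ref{cor:insert} are both exact operations on super vector spaces.

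For (i), I would compute the generalized central character of $F(M)$ explicitly using the $Q$-symmetric generators $p_k=x_1^{2k+1}+\dots+x_n^{2k+1}$ of $Z$. Under the embedding $W\hookrightarrow W^{n-m}\otimes W^m$ the Cartan coordinates partition cleanly, so
$$p_k=p_k^{(n-m)}\otimes 1+1\otimes p_k^{(m)}.$$
For $M\in\mathcal P^{n-m}$ the trivial generalized central character forces $p_k^{(n-m)}$ to act nilpotently on $M$. On $V(\chi)=V(c(\chi))$, which is simple by Theorem \ref{regular} because $c(\chi)$ is typical by construction, the even central element $p_k^{(m)}$ acts by the scalar $p_k(c(\chi))=\chi(p_k)$, the last equality holding because deleting zeros and $\pm$-pairs does not affect sums of odd powers. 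Hence $p_k-\chi(p_k)$ is nilpotent on $F(M)$, placing it in $W^\chi-\mathrm{mod}$.

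For (ii), I would combine the classification in Theorem \ref{irreducible}(2) with Lemma \ref{simplcent}: every simple object of $\mathcal P^{n-m}$ is isomorphic, up to parity, to some $S(\mathbf t,0)$ with $r+2p=n-m$. Since $q=0$, such a module is one-dimensional and therefore of M-type, so by the definition of $\boxtimes$ the exterior tensor product $S(\mathbf t,0)\otimes V(c(\chi))$ coincides with the $\boxtimes$-product $\mathbb C\boxtimes\Gamma_{t_1}\boxtimes\cdots\boxtimes\Gamma_{t_p}\boxtimes V(c(\chi))$ as a module over $W^r\otimes(W^2)^{\otimes p}\otimes W^m$. Restricting along $W\hookrightarrow W^r\otimes(W^2)^{\otimes p}\otimes W^m$ (Corollary \ref{cor:insert}) then identifies $\Phi(S(\mathbf t,0))$ with $S(\mathbf t,c(\chi))$, which is simple as a $W$-module by Theorem \ref{irreducible}(1). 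The main subtlety to watch is precisely this $\boxtimes$-versus-$\otimes$ distinction: had $S(\mathbf t,0)$ been of Q-type, the exterior tensor would split into two parity-shifted copies of the $\boxtimes$-product and $\Phi$ would fail to preserve simplicity. The argument works because simple objects of $\mathcal P^{n-m}$ automatically have $q=0$ and are therefore of M-type.
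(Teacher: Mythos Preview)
Your proof is correct and follows essentially the same approach as the paper, which simply cites Lemma~\ref{simplcent} and the construction of $S(\mathbf t,\lambda)$. Your argument for (i) via the additive splitting $p_k=p_k^{(n-m)}\otimes 1+1\otimes p_k^{(m)}$ is slightly more direct than reducing to simple constituents, and your explicit treatment of the $\otimes$ versus $\boxtimes$ issue in (ii) spells out exactly the identification $\Phi(S(\mathbf t,0))\simeq S(\mathbf t,c(\chi))$ that the paper leaves implicit.
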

      \begin{proof} The first assertion is immediate consequence of
    Lemma \ref{simplcent} and the second follows from the
    construction of $S(\mathbf t,\lambda)$.
        \end{proof}
  \begin{conjecture}\label{equivalence} The functor $\Phi:\mathcal    P^{n-m} \to W^\chi-\mathrm{mod}$
    defines an equivalence of categories.
  \end{conjecture}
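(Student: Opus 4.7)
The plan is to establish the equivalence by combining the classification of simple objects in both categories with a devissage argument. First, I would note that Lemma \ref{functor} already supplies the correct behavior on simples: $\Phi$ sends $S(\mathbf t, 0) \in \mathcal P^{n-m}$ (with $q = 0$) to $S(\mathbf t, c(\chi)) \in W^\chi\text{-mod}$, and by Proposition \ref{classification} every simple in $W^\chi\text{-mod}$ has this form up to change of parity. The simples on either side are parameterized by pairs $(p, \mathbf t)$ with $p \le (n-m)/2$ and $\mathbf t \in (\mathbb C^*)^p/S_p$ (together with the trivial $W^r$-factor for $r = n-m-2p$). So $\Phi$ already induces a bijection on isomorphism classes of simple objects, with an explicit parity correspondence controlled by the type of $V(\chi)$ — M-type or Q-type depending on the parity of $m$.

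Second, by exactness of $\Phi$ (Lemma \ref{functor}) and the fact that both categories have finite length (simple $W$-modules being finite-dimensional by \cite{PS2}), full faithfulness and essential surjectivity on all objects reduce by standard devissage to two checks: that the natural map on endomorphisms of simples $L \in \mathcal P^{n-m}$ is an isomorphism, and that the induced map
\[
\operatorname{Ext}^1_{W^{n-m}}(L, L') \longrightarrow \operatorname{Ext}^1_W(\Phi L, \Phi L')
\]
is an isomorphism for all pairs of simples $L, L'$. The endomorphism statement reduces by Schur's lemma to matching M-type versus Q-type, which can be read off explicitly: the decomposition formula (\ref{decomposition}) shows that the Q-type generators supplied by $V(\chi)$ and those supplied by $L$ combine to give precisely the type predicted by Lemma \ref{one-dim}. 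Injectivity of the $\operatorname{Ext}^1$ map is formal from exactness; the main work lies in surjectivity.

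The main obstacle is this lifting of extensions. My preferred attack is to construct an explicit quasi-inverse $\Psi$. The idea is: for $X \in W^\chi\text{-mod}$, use the embedding $W \hookrightarrow W^{n-m} \otimes W^m$ of Corollary \ref{cor:insert} together with the action formulas (\ref{simpleaction}) to extend the $W$-action on $X$ canonically to a $W^{n-m} \otimes W^m$-action (the key point being that, in every block of $W^\chi\text{-mod}$, the image of $W$ inside the action algebra already determines an action of the larger product, since $V(\chi)$ is simple and its endomorphism algebra is known). One then defines $\Psi(X) := \operatorname{Hom}_{W^m}(V(\chi), X)$, naturally a $W^{n-m}$-module with trivial generalized central character, hence in $\mathcal P^{n-m}$. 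Verifying that $\Psi \circ \Phi \cong \operatorname{id}$ and $\Phi \circ \Psi \cong \operatorname{id}$ should then reduce to the simple case already handled, extended via the five-lemma. The heart of the argument is constructing the canonical $W^m$-action on objects $X$ that are a priori only $W$-modules — this is an analogue of the classical extraction of a semisimple block factor from a centralizer, and its verification will hinge on understanding the precise relations in $W$ as a subalgebra of $W^{n-m} \otimes W^m$ beyond what (\ref{decomposition}) captures on simples.
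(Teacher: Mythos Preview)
The statement you are attempting to prove is labeled a \emph{Conjecture} in the paper, and the paper gives no proof of it; after stating it, the authors move directly to the Yangian section. So there is no ``paper's own proof'' to compare against.

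That said, your outline is a reasonable plan of attack, and the reduction to an $\operatorname{Ext}^1$ isomorphism via devissage is standard and correct for exact functors between finite-length abelian categories inducing a bijection on simples. The genuine gap --- which you yourself flag --- is the construction of the quasi-inverse $\Psi$. The claim that for an arbitrary $X \in W^\chi\text{-mod}$ the $W$-action extends canonically to a $W^{n-m}\otimes W^m$-action is not justified: the embedding $W \hookrightarrow W^{n-m}\otimes W^m$ is a subalgebra inclusion, and there is no formal mechanism by which a module over a subalgebra acquires a module structure over the ambient algebra. The formula (\ref{simpleaction}) works on simples precisely because one has already identified $S(\mathbf t,\lambda)$ with $V(\lambda)$ as a vector space via the explicit tensor construction; for a nontrivial extension of two such simples there is no such identification available a priori, and the putative $W^m$-action on $X$ has no definition. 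Your parenthetical appeal to ``the image of $W$ inside the action algebra already determines an action of the larger product'' would require something like a Morita-type statement or a tensor-factorization of the block algebra $W^\chi$, neither of which follows from the results in the paper. This is exactly the obstacle that presumably led the authors to leave the statement as a conjecture.
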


  \vskip 0.1 in

  \section{Representations of the super Yangian of type  $Q(1)$}

  \vskip 0.1in

  Recall that in \cite{N1} the Yangians $YQ(n)$ associated with Lie superalgebras $Q(n)$ were defined. In \cite{PS2} and \cite{PS3} (Corollary 5.16) we have shown
  the existence of the surjective homomorphism $\varphi_n:YQ(1)\to W^n$, where $W^n$ is the finite $W$-algebra associated with principal nilpotent orbit in
  $Q(n)^*$. In this section we classify irreducible finite-dimensional representations of $YQ(1)$ and explore its connections with irreducible
  representations of $W^n$.

Recall that
$YQ(1)$ is the associative unital superalgebra over $\C$ with the countable set of generators
$$T_{i,j}^{(m)}\hbox{ where }m = 1, 2, \ldots \hbox{ and }i, j = \pm 1.$$

\noindent
The $\Z_2$-grading of the algebra $YQ(1)$ is defined as follows:
$$p(T_{i,j}^{(m)})  =
p(i) + p(j), \hbox{ where } p(1) = 0 \hbox{ and }  p(-1) = 1.$$
To write down defining relations for these generators we employ the formal series

\noindent
in $YQ(1)[[u^{-1}]]$:
\begin{equation}\label{equY1}
T_{i,j}(u) = \delta_{ij}\cdot 1 + T_{i,j}^{(1)}u^{-1} + T_{i,j}^{(2)}u^{-2} + \ldots.
\end{equation}
Then for all possible indices $i, j, k, l$ we have the relations

\begin{align}\label{equY2}
& (u^2 - v^2)[T_{i,j}(u), T_{k,l}(v)]\cdot (-1)^{p(i)p(k) + p(i)p(l)  +  p(k)p(l)}\\
& = (u + v)(T_{k,j}(u)T_{i,l}(v) - T_{k,j}(v)T_{i,l}(u))\nonumber\\
&-(u - v)(T_{-k,j}(u)T_{-i,l}(v) - T_{k,-j}(v)T_{i,-l}(u))\cdot (-1)^{p(k) +p(l)},
\nonumber
\end{align}
 where $v$ is a formal parameter independent of $u$, so that (\ref{equY2}) is an equality in the algebra of formal Laurent series in $u^{-1}, v^{-1}$ with
coefficients in $YQ(1)$.

\noindent
 For all indices $i, j$ we also have the relations
 \begin{equation}\label{equY3}
 T_{i,j}(-u) = T_{-i,-j}(u).
 \end{equation}

\noindent
Note that the relations (\ref{equY2}) and (\ref{equY3}) are equivalent to the following defining relations:

 \begin{align}\label{equY4}
 & ([T_{i,j}^{(m+1)}, T_{k,l}^{(r-1)}] - [T_{i,j}^{(m-1)}, T_{k,l}^{(r+1)}])
 \cdot (-1)^{p(i)p(k) + p(i)p(l)  + p(k)p(l)}  = \\
 & T_{k,j}^{(m)}T_{i,l}^{(r-1)} + T_{k,j}^{(m-1)}T_{i,l}^{(r)} -
 T_{k,j}^{(r-1)}T_{i,l}^{(m)} - T_{k,j}^{(r)}T_{i,l}^{(m-1)}\nonumber\\
 & + (-1)^{p(k) + p(l)}(-T_{-k,j}^{(m)}T_{-i,l}^{(r-1)} + T_{-k,j}^{(m-1)}T_{-i,l}^{(r)} +
 T_{k,-j}^{(r-1)}T_{i,-l}^{(m)} - T_{k,-j}^{(r)}T_{i,-l}^{(m-1)}),
 \nonumber
\end{align}

 \begin{equation}\label{equY5}
 T_{-i,-j}^{(m)} = (-1)^m T_{i,j}^{(m)},
\end{equation}
where $m, r = 1, \ldots$ and $T_{i,j}^{(0)} = \delta_{ij}$.

Recall that $YQ(1)$ is a Hopf superalgebra, see \cite{N},  with comultiplication given by the formula
$$\Delta(T_{i,j}^{(r)})=\sum_{s=0}^r\sum_{k} (-1)^{(p(i)+p(k))(p(j)+p(k))}T_{i,k}^{(s)}\otimes T_{k,j}^{(r-s)}.$$

There exists a surjective homomorphism $\varphi_n:YQ(1)\to W^n$ defined as follows:
$$\varphi_n(T_{1,1}^{(k)})=(-1)^k [\sum_{1\leq i_1 < i_2 <\ldots < i_k\leq n}
(x_{i_1} + (-1)^{k+1}\xi_{i_1}) \ldots (x_{i_{k-1}} - \xi_{i_{k-1}})(x_{i_{k}} + \xi_{i_{k}})]_{even},$$
$$\varphi_n(T_{-1,1}^{(k)})=(-1)^k [\sum_{1\leq i_1 < i_2 <\ldots < i_k\leq n}
(x_{i_1} + (-1)^{k+1}\xi_{i_1}) \ldots (x_{i_{k-1}} - \xi_{i_{k-1}})(x_{i_{k}} + \xi_{i_{k}})]_{odd}.$$
Note that $\varphi_n(T_{1,1}^{(k)}) = \varphi_n(T_{-1,1}^{(k)}) = 0$ if $k > n$.

 \begin{lemma}\label{center} Let
   \begin{equation}\label{etadef}
     \eta_i=(-\frac{1}{2})^i\operatorname{ad}^iT_{1,1}^{(2)}(T_{1,-1}^{(1)}),\quad Z_{2i}=\frac{1}{2}[\eta_0,\eta_{2i}].
     \end{equation}
   \begin{enumerate}
   \item The relation (\ref{oddrel}) holds.
   \item The elements $\{Z_{2i}\mid i\in\mathbb N\}$ are algebraically independent generators of the center of $YQ(1)$.
   \item The elements $\eta_0$ and $\{T_{1,1}^{(2i)} \mid i\in\mathbb N\}$ generate $YQ(1)$.
     \end{enumerate}
   \end{lemma}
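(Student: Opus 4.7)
The plan is to use the surjection $\varphi_n\colon YQ(1)\to W^n$ as a bridge to the $W$-algebras, where the analogous statements are already available. The technical input throughout is a stabilization-of-kernels property $\bigcap_n\ker\varphi_n=0$ (or, more weakly, injectivity of $\varphi_n$ in bounded degree for $n$ large), which is a standard feature of centralizer constructions; I would cite this from \cite{PS3} or the original Yangian references.

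The first key step is to prove by induction on $i$ that $\varphi_n(\eta_i)=-\phi_i$, whence $\varphi_n(Z_{2i})=z_{2i}$. The base case is immediate: $\varphi_n(\eta_0)=\varphi_n(T_{-1,1}^{(1)})=-u_1(1)=-\phi_0$. The inductive step reduces to the identity $-\tfrac{1}{2}\operatorname{ad}(u_2(0))(\phi_{i-1})=\phi_i$ inside $W^n$, which follows by a direct computation in $U(\h)$ using $u_2(0)=\sum_{i<j}(x_ix_j-\xi_i\xi_j)$ and the explicit matrix $T$ of (\ref{equmatrix}).

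Part (1) is then immediate: (\ref{oddrel}) applied in each $W^n$ yields that $[\eta_i,\eta_j]-(-1)^i 2Z_{i+j}$ (for $i+j$ even) and $[\eta_i,\eta_j]$ (for $i+j$ odd) lie in every $\ker\varphi_n$, hence vanish in $YQ(1)$. Centrality of $Z_{2i}$ in part (2) follows in the same way, since $z_{2i}\in Z(W^n)$; algebraic independence is forced because a polynomial relation $P(Z_0,\dots,Z_{2N})=0$ would specialize under $\varphi_n$ ($n>N$) to a relation among $z_0,\dots,z_{2N}$ in $W^n$, contradicting Lemma~\ref{structure}. The remaining assertion in (2) -- that $\{Z_{2i}\}$ \emph{generate} all of $Z(YQ(1))$ -- cannot be deduced from the $W^n$-images alone, and I would match our generators with the standard central generators of $YQ(1)$ constructed in \cite{N1,N}.

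For part (3), let $\mathcal Y\subset YQ(1)$ be the subalgebra generated by $\eta_0$ and $\{T_{1,1}^{(2i)}\}$. The recursion (\ref{etadef}) puts every $\eta_i$, hence every $Z_{2i}$, into $\mathcal Y$; under $\varphi_n$, $\varphi_n(\mathcal Y)$ contains $\phi_0,\dots,\phi_{n-1}$ and all $u_{2k}(0)$, and by (\ref{decomposition}) one recovers every $u_k(d)$, so $\varphi_n(\mathcal Y)=W^n$ for all $n$. The main obstacle is bootstrapping this equality of $\varphi_n$-images to $\mathcal Y=YQ(1)$: this cannot be done from surjectivity alone and requires a filtered/PBW argument on $YQ(1)$ (e.g.\ the loop filtration $\deg T^{(r)}_{i,j}=r$) combined with the kernel-stabilization input above. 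The real technical work is thus this ambient filtration input; everything else reduces to unpacking what has already been proved for $W^n$.
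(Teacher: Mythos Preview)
Your approach is the same as the paper's: the paper's proof is a single sentence, ``It follows from the similar statements for $W^n$ for all $n$ and the fact that $\bigcap_{n\in\mathbb N}\operatorname{Ker}\varphi_n=0$,'' together with the remark that $\varphi_n(\eta_i)=\phi_i$ and $\varphi_n(Z_{2i})=z_{2i}$. Two small points: first, $\eta_0=T_{1,-1}^{(1)}$, not $T_{-1,1}^{(1)}$, and by (\ref{equY5}) these differ by a sign, so the correct identification is $\varphi_n(\eta_i)=+\phi_i$ (this does not affect the quadratic relation (\ref{oddrel})); second, your concerns about (2) and (3) --- that surjectivity of each $\varphi_n$ together with $\bigcap_n\ker\varphi_n=0$ does not by itself force $\mathcal Y=YQ(1)$ or that $\{Z_{2i}\}$ generate the whole center, and that one needs a filtered/stabilization input --- are legitimate and make explicit what the paper's one-line proof leaves implicit.
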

    \begin{remark}\label{correspondence}
We have the following correspondence between generators of $YQ(1)$ and $W^n$
 $$\varphi_n(\eta_i) = \phi_i, \quad \varphi_n(Z_{2i}) = z_{2i}, \quad 0\leq i\leq n-1$$
\end{remark}
   \begin{proof} It follows from the similar statements for $W^n$ for all $n$ and the fact that $\bigcap_{n\in\mathbb N}\operatorname{Ker}\varphi_n=0$.
   \end{proof}

   It is easy to see the following commutative diagram:
   \begin{equation}\label{diagram}
\begin{CD}
YQ(1) @>\Delta>> YQ(1)\otimes YQ(1) \\
@V\varphi_{m+n}VV @V\varphi_m\otimes\varphi_nVV \\
W^{m+n} @>>> W^m\otimes W^n
\end{CD}
\end{equation}
where the bottom horizontal arrow is the composition of the flip $W^n\otimes W^m\to W^m\otimes W^n$ with the map $W^{m+n}\to W^n\otimes W^m$ defined in Lemma \ref{insert}.
The appearance of the flip is due to the fact that the flip is used in the identification of $U(\mathfrak h)\subset U(Q(l))$ with $U(Q(1))^{\otimes l}$,
see the formula before Theorem 5.8 and Theorem 5.14 in \cite{PS3}.

   Let $M$ be a simple $YQ(1)$-module. Then $M$ admits the central character $\chi$. We set $\chi_{2k}=\chi(Z_{2k})$ and consider the generating function
   $$\chi(u)= \sum_{i=0}^\infty \chi_{2i}u^{-2i-1}.$$

   \begin{lemma}\label{necessary} Let $M$ be a finite-dimensional simple $YQ(1)$-module admitting central character $\chi$. Then $\chi(u)$ is a rational function
     of the form $\frac{a_0u^{-1}+\dots+a_pu^{-2p-1}}{1+c_1u^{-2}+\dots+c_qu^{-2q}}$.
   \end{lemma}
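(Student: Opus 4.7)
The plan is to show that the scalar sequence $(\chi_{2i})_{i\geq 0}$ satisfies a linear recurrence with constant coefficients, and in shifts of $2$; such a recurrence is equivalent, by a standard generating-function argument, to $\chi(u)$ having the claimed rational form.

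Set $A:=T_{1,1}^{(2)}$ and $B:=T_{1,-1}^{(1)}$, and write $\bar A$, $\bar B$ for the operators by which they act on the finite-dimensional space $M$. Since $\End_{\mathbb C}(M)$ is finite dimensional, the linear endomorphism $\operatorname{ad}\bar A$ of $\End_{\mathbb C}(M)$ satisfies its minimal polynomial. In particular, there exist $N\geq 1$ and scalars $c_0,\dots,c_{N-1}\in\mathbb C$ with $\operatorname{ad}^N(\bar A)(\bar B)=\sum_{j=0}^{N-1}c_j\operatorname{ad}^j(\bar A)(\bar B)$ on $M$. Using the definition $\eta_i = (-1/2)^i\operatorname{ad}^i(A)(B)$ from (\ref{etadef}), and applying $\operatorname{ad}\bar A$ iteratively to both sides, this yields a shift-invariant linear recurrence
\[
\eta_{N+k}|_M \;=\; \sum_{j=0}^{N-1}\tilde c_j\,\eta_{j+k}|_M, \qquad k\geq 0,
\]
for suitable $\tilde c_j\in\mathbb C$.

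Now take the super-bracket $[\eta_0,\cdot\,]$ of both sides with $k=2i-N$, for $i$ large enough that $k\geq 0$. By Lemma \ref{center}(1) (which lifts relation (\ref{oddrel}) to $YQ(1)$), $[\eta_0,\eta_m]=2Z_m$ when $m$ is even and $[\eta_0,\eta_m]=0$ when $m$ is odd. The left side yields $2\chi_{2i}\cdot\mathrm{Id}$ since $2i$ is even; on the right side only those $j$ of the same parity as $N$ survive, and for these $N-j$ is even, say $N-j=2l$. The surviving indices are $2i-N+j=2i-2l$, all even, so we obtain a recurrence of the form
\[
\chi_{2i} \;=\; \sum_{l}d_l\,\chi_{2i-2l}
\]
valid for all sufficiently large $i$, with constants $d_l\in\mathbb C$ independent of $i$.

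Finally, such a constant-coefficient recurrence in shifts of $2$ is equivalent to the existence of polynomials $P(u^{-1})=a_0u^{-1}+\dots+a_pu^{-2p-1}$ (in odd powers only) and $Q(u^{-2})=1+c_1u^{-2}+\dots+c_qu^{-2q}$ such that $\chi(u)\,Q(u^{-2})=P(u^{-1})$, which is precisely the claimed form. The main subtlety is the parity bookkeeping in the second step: passing from a recurrence involving all $\eta_m$ to one involving only the $Z_{2i}$ requires that exactly the right-parity terms survive, and it is this mechanism that forces the denominator to be a polynomial in $u^{-2}$ rather than $u^{-1}$.
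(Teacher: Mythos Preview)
Your proof is correct but takes a genuinely different route from the paper's. The paper argues via the Clifford-algebra structure: it observes that the subalgebra $\mathbf C\subset YQ(1)$ generated by the $\eta_i$, reduced modulo $(Z_{2i}-\chi_{2i})$, is the Clifford algebra $\mathbf{Cliff}(V,B_\chi)$ on $V=\bigoplus_i\mathbb C\eta_i$ with the form~(\ref{form}); since $M$ restricts to a finite-dimensional $\mathbf{Cliff}(V,B_\chi)$-module, $B_\chi$ must have finite rank, and reading off the linear dependence among columns of the Gram matrix yields the recurrence~(\ref{recursion}). You instead exploit the recursive definition $\eta_i=(-\tfrac12)^i\operatorname{ad}^i(T_{1,1}^{(2)})(\eta_0)$ directly: finite-dimensionality of $\End_{\mathbb C}(M)$ forces $\operatorname{ad}\bar A$ to satisfy a polynomial identity, giving a shift-invariant recurrence for the $\eta_i|_M$, and then bracketing with $\eta_0$ and using Lemma~\ref{center}(1) filters out the odd-index terms and produces the recurrence on the $\chi_{2i}$. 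Your argument is more elementary in that it avoids any appeal to representation theory of Clifford algebras; the paper's argument is more structural, isolating the finite-rank form $B_\chi$ that is reused later (e.g.\ in Corollary~\ref{bigclifford}). Both arrive at the same recurrence and the same generating-function conclusion.
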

   \begin{proof} Let $\mathbf{C}\subset YQ(1)$ denote the unital subalgebra generated by $\{\eta_i\mid i\in\mathbb N\}$. Let $\mathbf{C}_{\chi}$ denote the quotient of
     $\mathbf C$ by the ideal $(\{Z_{2i}-\chi_{2i}\mid i\in\mathbb N\})$. Then the relations (\ref{oddrel}) imply that $\mathbf{C}_{\chi}$ is isomorphic to the
     infinite-dimensional Clifford algebra $\mathbf{Cliff}(V, B_{\chi})$ on the space $V$ with basis $\{\eta_i\mid i\in\mathbb N\}$ and the symmetric form
     $B_{\chi}$ defined by the formula
     \begin{equation}\label{form}
  B_{\chi}(\eta_i,\eta_j)=\begin{cases} (-1)^i 2\chi_{i+j}\,\text{if}\,\,i+j\,\text{is even}\\0 \,\text{if}\,\,i+j\,\text{is odd}\end{cases}.
  \end{equation}
  Note that $M$ by definition restricts to a certain $\mathbf{C}_\chi$-module. On the other hand, $\mathbf{Cliff}(V, B_{\chi})$ admits a
  finite-dimensional representation if and only if $B_{\chi}$ has a finite rank. Look at the infinite symmetric matrix of $B_{\chi}$ in the basis $\{\eta_i\}$.
  Then every column of this matrix is a linear combination of the first $k$ columns for some $k$. The formula (\ref{form}) implies that for some positive integers
  $q$ and $s$ and the coefficients $c_1,\dots,c_q$ we have a recurrence relation
\begin{equation}\label{recursion}
  \chi_{2m}=\sum^q_{i=1}-c_i\chi_{2m-2i},\quad\text{for all}\ m\geq s.
  \end{equation}
  This condition is equivalent to the rationality of $\chi(u)$.
\end{proof}

Recall the $W^n$-module  $V(\mathbf s)$ constructed in Section 4. Using the homomorphism $\varphi_n$ we equip $V(\mathbf s)$ with a $YQ(1)$-module structure.
Our next goal is to compute the
central character of $V(\mathbf s)$. For this we need to compute the $\{z_{2i}\}$ in terms of symmetric polynomials. Recall the notations of Section 3.
Note that for any $n$ the elements $\{z_{2i}\}$ of the center can be expressed in terms of symmetric polynomials of $x_1,\dots,x_n$ and this expression stabilizes
as $n\to\infty$. Thus, $z_{2i}$ is a particular element in the ring of symmetric functions of degree $2i+1$.
\begin{lemma}\label{description} We have the following expression
\begin{equation}\label{mainexpression}
  z_{2k}=-\sum_{i=1}^{k}\sigma_{2i}z_{2k-2i}+\sigma_{2k+1},
  \end{equation}
  where $\sigma_p=\sum_{i_1<\dots<i_p}x_{i_1}\dots x_{i_p}$ is the elementary symmetric function.
  \end{lemma}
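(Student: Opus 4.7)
The plan is to recast the recursion as a generating function identity and compute the relevant series in closed form via the generating series $\Phi(t) := \sum_{k\geq 0}\phi_k t^k$. Set $Z(t) := \sum_{k\geq 0} z_{2k}t^{2k+1}$, and let $E_e(t) := \sum_{k\geq 0}\sigma_{2k}t^{2k}$, $E_o(t) := \sum_{k\geq 0}\sigma_{2k+1}t^{2k+1}$ denote the even and odd parts of $E(t) := \prod_{i=1}^n(1+tx_i)$. Multiplying out, the recursion (with the convention $\sigma_0 := 1$) is equivalent to $Z(t)\cdot E_e(t) = E_o(t)$, or equivalently $Z(t) = (E(t) - E(-t))/(E(t) + E(-t))$; this is the identity I would establish.

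The first step is to package $\phi_k = T^k\phi_0$ into $\Phi(t) = (1-tT)^{-1}\phi_0 \in U(\h)[[t]]$ and expand $\Phi(t) = \sum_{i=1}^n a_i(t)\xi_i$ with scalar coefficients $a_i(t) \in \mathbb C[x_1,\ldots,x_n][[t]]$. From $\phi_0 = \sum_i\xi_i$ and $[\xi_i,\xi_j] = 2x_i\delta_{ij}$ one gets $\tfrac12[\phi_0,\Phi(t)] = \sum_i x_i\, a_i(t)$. On the other hand, by (\ref{oddrel}), $[\phi_0,\phi_k]$ vanishes for odd $k$ and equals $2z_k$ for even $k$, so the same quantity equals $\sum_{k\geq 0} z_{2k}t^{2k} = t^{-1}Z(t)$. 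Hence
\[
  Z(t) = t\sum_{i=1}^n x_i\, a_i(t),
\]
and the task reduces to computing the vector $\mathbf a(t)$ explicitly.

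By construction $\mathbf a(t)$ solves $(1-tM)\mathbf a = \mathbf 1$ with $M_{ij} = x_j\operatorname{sgn}(j-i)$. Subtracting the $(i+1)$st row from the $i$th one kills all but two terms and leaves the two-term recursion $(1+tx_{i+1})a_{i+1} = (1-tx_i)a_i$, which integrates to $a_i(t) = \alpha(t)\, C_i(t)$ for a single undetermined scalar $\alpha(t)$, where
\[
  C_i(t) := \prod_{l<i}(1-tx_l)\prod_{l>i}(1+tx_l).
\]
Set $G_j(t) := (1+tx_j)C_j(t)$; a direct check yields the telescoping identity $G_j - G_{j+1} = 2tx_jC_j$ for $1\leq j\leq n-1$, together with $G_1 = E(t)$ and $(1-tx_n)C_n = E(-t)$. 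Summing, $t\sum_{j=1}^n x_jC_j$ collapses to $(E(t)-E(-t))/2 = E_o(t)$, and the same telescoping applied to any single row of $(1-tM)\mathbf a = \mathbf 1$ pins down $\alpha(t) = 1/E_e(t)$. Dividing gives $Z(t) = E_o(t)/E_e(t)$, as required.

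The only nontrivial step I anticipate is spotting the telescoping identity $G_j - G_{j+1} = 2tx_jC_j$, which exploits the asymmetry of $M$ (upper entries $+x_j$, lower entries $-x_j$) and is exactly what makes the two products $\prod(1\pm tx_l)$ appear naturally. Everything else is bookkeeping, and no structural input about $W$ beyond the definition of $\Phi(t)$ and relation (\ref{oddrel}) is used.
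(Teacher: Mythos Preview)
Your argument is correct and genuinely different from the paper's. The paper proceeds via Cayley--Hamilton: it cites from \cite{PS2} that $\det(\lambda\operatorname{Id}-T)=\lambda^n+\sum_i\sigma_{2i}\lambda^{n-2i}$, uses $z_{2k}=[x_1,\dots,x_n]\,T^{2k}\,[1,\dots,1]^t$ to get the recursion for $2k\geq n$, then reduces to $n=2k+1$ by a degree count and pins down the missing $\sigma_{2k+1}$-coefficient by induction on $k$ together with the $Q$-symmetry $z_{2k}(x_1,\dots,x_{2k-1},t,-t)=z_{2k}(x_1,\dots,x_{2k-1})$. Your route instead inverts $(1-tT)$ explicitly via the two-term recursion and telescoping, obtaining the closed form $Z(t)=E_o(t)/E_e(t)$ in one stroke for every $n$ and every $k$. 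Your approach is more self-contained (no appeal to \cite{PS2}, no $Q$-symmetry, no induction on $k$) and in fact recovers the characteristic polynomial of $T$ implicitly, since your normalization computation shows $\det(1-tT)=E_e(t)$. The paper's approach, on the other hand, makes the structural reason for the recursion (Cayley--Hamilton) more visible and exploits properties of the center of $U(Q(n))$ that are of independent interest.
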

  \begin{proof} We proved in \cite{PS2}, Lemma 5.5 that for $W^n$
  the characteristic polynomial $\operatorname{det} (\lambda\operatorname{Id}-T)$ of $T$ equals
  $\lambda^n+\sum_{i=1}^{\lfloor n/2\rfloor}\sigma_{2i}\lambda^{n-2i}$. As
    $$z_{2k}(x_1,\dots,x_n)=[x_1,\dots,x_n]T^{2k}[1,\dots,1]^t,$$
    the Hamilton--Cayley identity implies that for $2k\geq n$ we have
    $$z_{2k}(x_1,\dots,x_n)=-\sum_{i=1}^{k}\sigma_{2i}z_{2k-2i}(x_1,\dots,x_n).$$
    Since the degree of $z_{2k}$ is $2k+1$ it is a polynomial of $\sigma_1,\dots,\sigma_{2k+1}$. Therefore it suffices to prove (\ref{mainexpression}) for $n=2k+1$. We do it
    by induction on $k$ using the fact that
    $z_{2k}(x_1,\dots,x_{2k+1})$ is $Q$-symmetric. Indeed, we already know that
    $$z_{2k}(x_1,\dots,x_{2k})=-\sum_{i=1}^{k}\sigma_{2i}z_{2k-2i}(x_1,\dots,x_{2k}),$$
    therefore from substituting $x_{2k+1}=0$ we get
    $$z_{2k}(x_1,\dots,x_{2k+1})=-\sum_{i=1}^{k}\sigma_{2i}z_{2k-2i}(x_1,\dots,x_{2k+1})+A\sigma_{2k+1}(x_1,\dots,x_{2k+1}).$$
    It remains to find the coefficient $A$. By $Q$-symmetry
    $$z_{2k}(x_1,\dots,x_{2k-1})=z_{2k}(x_1,\dots,x_{2k-1},t,-t).$$
    This leads to the identity
    $$z_{2k}(x_1,\dots,x_{2k-1})=-\sum_{i=1}^{k}\sigma_{2i}z_{2k-2i}(x_1,\dots,x_{2k-1})+$$
    $$+t^2\sum_{i=1}^{k}\sigma_{2i-2}z_{2k-2i}(x_1,\dots,x_{2k-1})-
    At^2\sigma_{2k-1}(x_1,\dots,x_{2k-1}).$$
    Furthermore, by induction assumption we have
    $$\sum_{i=1}^{k}\sigma_{2i-2}z_{2k-2i}(x_1,\dots,x_{2k-1})=$$
    $$z_{2k-2}(x_1,\dots,x_{2k-1})+\sum_{i=2}^{k}\sigma_{2i-2}z_{2k-2i}(x_1,\dots,x_{2k-1})=
    \sigma_{2k-1}(x_1,\dots,x_{2k-1})$$
    Hence $A=1$.
  \end{proof}

     \begin{corollary}\label{character} $YQ(1)$-module $V(\mathbf s)$
 admits central character $\chi$ where
 $$\chi(u) = \frac{\sum_{i=0}^{\infty}\sigma_{2i+1}(\mathbf s)u^{-2i-1}} {1+\sum_{i=1}^{\infty}\sigma_{2i}(\mathbf s)u^{-2i}}.$$
\end{corollary}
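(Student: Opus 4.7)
The plan is to transport the identity of Lemma \ref{description} from $W^n$ to the generating function $\chi(u)$ of the central character of $V(\mathbf s)$ as a $YQ(1)$-module, using the homomorphism $\varphi_n$ and the fact that $x_i$ acts on $V(\mathbf s)$ as the scalar $s_i$.

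First I would compute each $\chi_{2k}$ individually. By Remark \ref{correspondence} we have $\varphi_n(Z_{2k}) = z_{2k}$, and by Lemma \ref{description} the element $z_{2k}\in W^n$ is expressed as a symmetric polynomial in $x_1,\dots,x_n$. Since each $x_i$ acts on $V(\mathbf s)$ as the scalar $s_i\operatorname{Id}$, the element $Z_{2k}$ acts as the scalar $z_{2k}(\mathbf s)$. Hence $\chi_{2k}=z_{2k}(\mathbf s)$.

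Next I would rewrite the recurrence of Lemma \ref{description} specialized at $\mathbf s$ as
\begin{equation*}
\chi_{2k}+\sum_{i=1}^{k}\sigma_{2i}(\mathbf s)\,\chi_{2k-2i}=\sigma_{2k+1}(\mathbf s),\qquad k\geq 0,
\end{equation*}
with the convention $\sigma_0(\mathbf s)=1$. Multiplying both sides by $u^{-2k-1}$ and summing over $k\geq 0$ yields
\begin{equation*}
\Bigl(1+\sum_{i\geq 1}\sigma_{2i}(\mathbf s)u^{-2i}\Bigr)\,\chi(u)=\sum_{k\geq 0}\sigma_{2k+1}(\mathbf s)u^{-2k-1},
\end{equation*}
which, since the series on the left starts with $1$ and is hence invertible in $\mathbb C[[u^{-1}]]$, gives the stated formula for $\chi(u)$.

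There is no real obstacle here: once Lemma \ref{description} is available, the argument is a bookkeeping identity between formal power series. The only point that requires minor care is noting that $z_{2k}$, being a well-defined element of the ring of symmetric functions independently of $n$ (as observed before Lemma \ref{description}), produces the correct scalar $z_{2k}(\mathbf s)$ in $V(\mathbf s)$ regardless of how many coordinates of $\mathbf s$ are zero; this is automatic from the fact that $z_{2k}$ is a polynomial in $x_1,\dots,x_n$ and these act diagonally on $V(\mathbf s)$.
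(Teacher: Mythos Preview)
Your proof is correct and is exactly the argument the paper intends: the corollary is stated without proof precisely because specializing the recurrence of Lemma \ref{description} at $\mathbf s$ and summing against $u^{-2k-1}$ immediately gives the generating-function identity. The only small remark is that Remark \ref{correspondence} literally asserts $\varphi_n(Z_{2k})=z_{2k}$ only for $0\le k\le n-1$, but since both sides are defined for all $k$ by the same algebraic formulas (\ref{etadef}) and (\ref{centgen}) and $\varphi_n$ is a homomorphism, the identity holds for every $k$; you implicitly use this and it is harmless.
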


  \begin{corollary}\label{independence} The elements $\{z_{2k},\mid k=0,\dots,\lfloor\frac{n-1}{2}\rfloor\}$ and
  $\{\sigma_{2k},\mid k=1,\dots,\lfloor\frac{n}{2}\rfloor\}$ form an algebraically independent set of generators in the ring of symmetric polynomials in $n$ variables.
    \end{corollary}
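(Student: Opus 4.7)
The plan is to reduce the statement to a generating-set claim plus a dimension count. The ring of symmetric polynomials $\mathbb C[x_1,\dots,x_n]^{S_n}$ is a polynomial algebra of transcendence degree $n$, freely generated by $\sigma_1,\dots,\sigma_n$. A direct count shows the proposed set has exactly $n$ elements: $\lfloor(n-1)/2\rfloor+1$ of the form $z_{2k}$ plus $\lfloor n/2\rfloor$ of the form $\sigma_{2k}$, summing to $n$ in both the odd and even $n$ cases. Since any set of $n$ elements in a polynomial ring of transcendence degree $n$ is algebraically independent as soon as it generates, it is enough to prove that the given set generates the full ring of symmetric polynomials.

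For this, I would invoke Lemma \ref{description} in the rewritten form
\[
\sigma_{2k+1}=z_{2k}+\sum_{i=1}^{k}\sigma_{2i}\,z_{2k-2i},
\]
and argue by induction on $k$ that every odd elementary symmetric polynomial $\sigma_{2k+1}$ with $2k+1\leq n$ lies in the subalgebra $\mathcal R$ generated by $\{z_{2j}\}$ and $\{\sigma_{2j}\}$ from the given set. The base case is $\sigma_1=z_0\in\mathcal R$. For the inductive step, the right-hand side involves only $z_{2k}$, the even elementary polynomials $\sigma_{2},\sigma_4,\dots,\sigma_{2k}$ (which are in the given set), and the previously produced $z_{2k-2},\dots,z_0$, so $\sigma_{2k+1}\in\mathcal R$. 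Combining this with the even $\sigma_{2k}$ already included, we obtain $\sigma_1,\sigma_2,\dots,\sigma_n\in\mathcal R$, so $\mathcal R=\mathbb C[x_1,\dots,x_n]^{S_n}$.

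Finally, algebraic independence follows from the generation statement: a generating set of size equal to the Krull dimension of a polynomial ring over a field is automatically a transcendence basis, hence algebraically independent. There is essentially no obstacle in this argument; the only thing to double-check is the index count for $n$ even versus $n$ odd, to confirm that the induction on $\sigma_{2k+1}$ terminates at a value $\leq n$ and that no odd $\sigma$ index beyond $n$ is needed. This follows immediately from the bounds $2k+1\leq 2\lfloor(n-1)/2\rfloor+1\leq n$.
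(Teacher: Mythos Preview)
Your proof is correct and is essentially the argument the paper intends: the corollary is stated without proof immediately after Lemma~\ref{description}, and the intended deduction is exactly the one you give --- rewrite (\ref{mainexpression}) as $\sigma_{2k+1}=z_{2k}+\sum_{i=1}^{k}\sigma_{2i}z_{2k-2i}$ to see that the odd elementary symmetric polynomials lie in the subalgebra generated by the listed elements, and then conclude algebraic independence from the fact that $n$ generators of a polynomial algebra of Krull dimension $n$ are automatically algebraically independent. One cosmetic slip: the $z_{2k-2},\dots,z_0$ on the right-hand side are not ``previously produced'' by the induction but are simply members of the given generating set; this does not affect the argument.
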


  \begin{proposition}\label{existence} For any rational $\chi(u)$ there exist $n$ and $\mathbf s$ such that $V({\mathbf s})$ admits central character $\chi$.
  \end{proposition}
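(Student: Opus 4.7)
The plan is to invert the central character formula of Corollary \ref{character}. The first step is to recast it in a more geometric form: setting $P(u):=\prod_{i=1}^n(u+s_i)$ and $Q(u):=\prod_{i=1}^n(u-s_i)$, the generating identities $\sum_{k=0}^n\sigma_k(\mathbf s)u^{-k}=u^{-n}P(u)$ and $\sum_{k=0}^n(-1)^k\sigma_k(\mathbf s)u^{-k}=u^{-n}Q(u)$ let me separate even and odd powers of $u^{-1}$ in the numerator and denominator of Corollary \ref{character}, yielding the closed form
\begin{equation*}
\chi_{\mathbf s}(u)=\frac{P(u)-Q(u)}{P(u)+Q(u)}.
\end{equation*}
Note also that $Q(u)=(-1)^n P(-u)$, so this ratio is completely determined by the single polynomial $P$ together with the choice of degree $n$.

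Next I would normalize the given rational $\chi(u)$. Write $\chi=N/D$ with $N,D$ coprime polynomials and $D$ monic. Since $\chi(-u)=-\chi(u)$, the pair $(N,D)$ is either (odd, even) or (even, odd) as polynomials in $u$; in the latter case, multiplying top and bottom by $u$ switches to the former. So without loss of generality $N$ is odd, $D$ is even, and $\deg N<\deg D$ (using $\chi\sim u^{-1}$ at infinity). In particular $n:=\deg D$ is automatically an even integer.

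In the final step I would form $P(u):=D(u)+N(u)$, a monic polynomial of degree $n$, and factor it over $\mathbb C$ via the fundamental theorem of algebra as $P(u)=\prod_{i=1}^n(u+s_i)$ for some $\mathbf s\in\mathbb C^n$. Since $n$ is even while $D$ and $N$ have opposite parity, the associated $Q(u)=(-1)^n P(-u)=P(-u)=D(u)-N(u)$. Hence $P+Q=2D$ and $P-Q=2N$, so the closed form from the first step yields $\chi_{\mathbf s}(u)=N(u)/D(u)=\chi(u)$, as required.

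The only step requiring any insight is the first: recognizing the closed form $(P-Q)/(P+Q)$ packaged inside Corollary \ref{character}. Once this is identified, existence reduces to factoring a polynomial over $\mathbb C$, and the verification is a routine parity check.
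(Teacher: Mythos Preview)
Your proof is correct and follows essentially the same route as the paper: both reduce the existence of $\mathbf s$ to factoring over $\mathbb C$ a single polynomial whose coefficients are read off from $\chi$. Your closed form $\chi_{\mathbf s}=(P-Q)/(P+Q)$ is a pleasant repackaging of Corollary \ref{character}, but the paper proceeds a touch more directly, simply setting $n=\max(2p+1,2q)$ and choosing $\mathbf s$ with $\sigma_{2k}(\mathbf s)=c_k$, $\sigma_{2k+1}(\mathbf s)=a_k$ via the fundamental theorem of algebra.
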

  \begin{proof} It follows immediately from Corollary \ref{character}. Indeed, by Lemma \ref{necessary}
    $$\chi(u)=\frac{a_1u^{-1}+\dots+a_pu^{-2p-1}}{1+c_1u^{-2}+\dots+c_qu^{-2q}}.$$
    Let $n=\max(2p+1,2q)$ and assume that $a_i=0$ for $i>p$, $c_j=0$ for $j>q$. One can choose
    $\mathbf s=(s_1,\dots,s_n)$ so that
    $\sigma_{2k}(s_1,\dots,s_n)=c_k$ and $\sigma_{2k+1}(s_1,\dots,s_n)=a_k$.
     \end{proof}
  \begin{corollary}\label{bigclifford} Any simple finite-dimensional $\mathbf C$-module is either trivial or isomorphic to $V(\mathbf s)$ or $\Pi V(\mathbf s)$ for some typical
  regular $\mathbf s$.
  \end{corollary}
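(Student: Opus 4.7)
The plan is to bootstrap from the $W$-algebra classification and reduce to the representation theory of finite-dimensional Clifford algebras. Since $Z_{2k}\in\mathbf{C}$ by (\ref{etadef}), Schur's lemma equips any simple finite-dimensional $\mathbf{C}$-module $M$ with a central character $\chi$. The proof of Lemma \ref{necessary} then applies verbatim to $M$: the action of $\mathbf{C}$ on $M$ factors through the quotient $\mathbf{C}_\chi\simeq\mathbf{Cliff}(V,B_\chi)$, and finite-dimensionality of $M$ forces $B_\chi$ to have finite rank. I split into two cases depending on whether $\chi$ vanishes.

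Suppose first that $\chi\neq 0$. By Lemma \ref{cent} the core $c(\chi)$ is then non-empty; set $\mathbf{s}:=c(\chi)$, which has some length $m\geq 1$ and is typical regular by the very definition of the core. I pull $V(\mathbf{s})$ back to a $\mathbf{C}$-module via $\varphi_m:YQ(1)\to W^m$; by Corollary \ref{character} its central character is exactly $\chi$. To confirm that $V(\mathbf{s})$ is simple as a $\mathbf{C}$-module (not only as a $W^m$-module), I invoke Lemma \ref{cliff}: since $\mathbf{s}$ is typical regular, $\det\Gamma(\mathbf{s})\neq 0$, so the images $\phi_i=\varphi_m(\eta_i)$ span a non-degenerate quadratic subspace and already generate the full Clifford algebra $C_\mathbf{s}$ inside $\operatorname{End}(V(\mathbf{s}))$, which acts irreducibly. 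Thus both $M$ and $V(\mathbf{s})$ are simple modules over $\mathbf{C}_\chi\simeq C_\mathbf{s}$, and the uniqueness (up to parity) of a simple $\mathbb{Z}_2$-graded module over a finite-dimensional Clifford algebra forces $M\simeq V(\mathbf{s})$ or $M\simeq\Pi V(\mathbf{s})$.

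The case $\chi=0$ is the one delicate point. Here the relations (\ref{oddrel}) collapse to $[\eta_i,\eta_j]=0$ for all $i,j$, so $\mathbf{C}_0$ is the exterior algebra on $\{\eta_i\mid i\in\mathbb N\}$. The image of $\mathbf{C}_0$ inside $\operatorname{End}(M)$ is finite-dimensional, so only finitely many of the operators $\eta_i$ are linearly independent and the image is a genuine finite exterior algebra whose augmentation ideal is nilpotent. A Nakayama-type argument combined with the simplicity of $M$ then forces every $\eta_i$ to act by zero, so $M$ is the trivial module (up to parity switch).

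The main obstacle I anticipate is verifying that $V(\mathbf{s})$ is genuinely simple as a $\mathbf{C}$-module rather than merely as a $W^m$-module, which is where Lemma \ref{cliff} is essential to ensure that the image of $\mathbf{C}$ exhausts $C_\mathbf{s}$; the $\chi=0$ case requires a short additional argument to replace the infinite generating set by a finite one before Nakayama applies, but once that is in place both cases reduce cleanly to standard Clifford-algebra representation theory.
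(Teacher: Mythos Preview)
Your overall strategy matches the paper's: reduce to $\mathbf C_\chi$, which is an infinite Clifford algebra with finite-rank form, and identify its unique simple module with some $V(\mathbf s)$. The argument that $V(\mathbf s)$ is genuinely simple over $\mathbf C$ via Lemma~\ref{cliff} is exactly the content of the paper's observation that the composite $\mathbf C\hookrightarrow YQ(1)\xrightarrow{\varphi_n} W^n\xrightarrow{\theta_{\mathbf s}}C_{\mathbf s}$ is surjective for typical regular $\mathbf s$, and your separate handling of $\chi=0$ is a nice explicit touch the paper leaves implicit.

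There is, however, a real gap in the non-trivial case. You write ``By Lemma~\ref{cent} the core $c(\chi)$ is then non-empty; set $\mathbf s:=c(\chi)$.'' But Lemma~\ref{cent} and the notion of core are stated for central characters of $U(Q(n))$ (equivalently of $W^n$), which are automatically of the form $\chi_{\mathbf s}$ for some $\mathbf s\in\mathbb C^n$. Your $\chi$ is a character of the infinite polynomial ring $\mathbb C[Z_0,Z_2,\ldots]\subset\mathbf C$, and nothing you have cited shows that such a $\chi$ is realized by any $\mathbf s$; without this, $c(\chi)$ is undefined and the rest of the paragraph does not start. The missing step is precisely Proposition~\ref{existence}: you have already established (via Lemma~\ref{necessary}) that $B_\chi$ has finite rank, hence $\chi(u)$ is rational, and Proposition~\ref{existence} then produces $n$ and $\mathbf s$ with $\chi_{V(\mathbf s)}=\chi$. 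Only after that can you pass to the core and invoke Lemma~\ref{cent} or Corollary~\ref{character}. Replace the appeal to Lemma~\ref{cent} with an appeal to Proposition~\ref{existence} (followed, if you like, by passage to the core to get a typical regular $\mathbf s$) and the proof goes through.
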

  \begin{proof} Recall the notations of Section 4. Consider a homomorphism $\mathbf C\to C_{\mathbf s}$ defined as the composition
 $$\mathbf C\hookrightarrow YQ(1) \buildrel\varphi_n\over\longrightarrow
 W^n\buildrel\theta_{\mathbf s}\over\longrightarrow C_{\mathbf s}.$$
 This homomorphism is surjective if $\mathbf s$ is typical regular, see Theorem \ref{regular}.
 For any central
    character $\chi$ there exists one up to isomorphism and parity change simple $\mathbf C_\chi$-module. By Proposition \ref{existence}
    it must be isomorphic to $V(\mathbf s)$.
    \end{proof}
    \begin{remark} If $\mathbf s=(s_1,\dots,s_n)$ and $\mathbf s'=(s_1,\dots,s_n,s,-s)$ then $V(\mathbf s)$ and $V(\mathbf s')$ admit the same central character.
      We can see it now from the formula
$$\frac{\sum_{i=0}^{\infty}\sigma_{2i+1}(\mathbf s')u^{-2i-1}} {1+\sum_{i=1}^{\infty}\sigma_{2i}(\mathbf s')u^{-2i}}=\frac{(1-s^2u^{-2})(\sum_{i=0}^{\infty}\sigma_{2i+1}(\mathbf s)u^{-2i-1})} {(1-s^2u^{-2})(1+\sum_{i=1}^{\infty}\sigma_{2i}(\mathbf s)u^{-2i})}.$$
      \end{remark}

    \begin{lemma}\label{commutators} We have the following expression
    \begin{align}\label{threeterms}
   &  [T_{1,1}^{(2k)},\eta_i] = [T_{1,1}^{(2k+2)},\eta_{i-2}] - [T_{1,1}^{(2k)},\eta_{i-1}] + 2T_{1,1}^{(2k)}\eta_{i-1}, \quad i\geq 2,\\
   & [T_{1,1}^{(2k)},\eta_0] = 2T_{1,-1}^{(2k)}, \quad
   [T_{1,1}^{(2k)},\eta_1] = -2T_{1,-1}^{(2k+1)} - [T_{1,1}^{(2k)},\eta_{0}] + 2T_{1,1}^{(2k)}\eta_{0}.
 \nonumber
\end{align}
 \end{lemma}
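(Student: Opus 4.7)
The proof is a direct computation with the defining relations (\ref{equY4}) and (\ref{equY5}) of $YQ(1)$, treating $i=0$, $i=1$, and the recursion $i\geq 2$ separately. For the base case $i=0$: since $\eta_0 = T_{1,-1}^{(1)}$, the claim is $[T_{1,1}^{(2k)}, T_{1,-1}^{(1)}] = 2T_{1,-1}^{(2k)}$. I apply (\ref{equY4}) with $(i,j,k,l)=(1,-1,1,1)$ (in the notation of that relation) and $(m,r)=(0,2k+1)$. Because $T_{a,b}^{(0)}=\delta_{ab}$ and negative-level generators are zero, the LHS collapses to $-[T_{1,1}^{(2k)}, T_{1,-1}^{(1)}]$, while on the RHS I use (\ref{equY5}) to rewrite each $T_{-1,\pm 1}^{(\cdot)}$ factor and check that two blocks each contribute $-T_{1,-1}^{(2k)}$, giving $-2T_{1,-1}^{(2k)}$ in total.

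For $i=1$: the $k=1$ instance of the base case gives $\eta_1 = -\tfrac{1}{2}[T_{1,1}^{(2)},\eta_0] = -T_{1,-1}^{(2)}$, so I only need to compute $[T_{1,1}^{(2k)}, T_{1,-1}^{(2)}]$. Apply (\ref{equY4}) with $(i,j,k,l)=(1,-1,1,1)$ and $(m,r)=(1,2k+1)$. After invoking (\ref{equY5}), the RHS splits into a pure piece $-2T_{1,-1}^{(2k+1)}$; an anticommutator block $T_{1,1}^{(2k)}T_{1,-1}^{(1)} + T_{1,-1}^{(1)}T_{1,1}^{(2k)} = 2T_{1,1}^{(2k)}\eta_0 - [T_{1,1}^{(2k)},\eta_0]$; and a residual $[T_{1,1}^{(1)}, T_{1,-1}^{(2k)}]$. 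The residual vanishes because $T_{1,1}^{(1)}$ is central in $YQ(1)$, which itself follows from a one-line application of (\ref{equY4}) with $m=0$. Negating the result produces the second identity of the lemma.

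For the recursion $i\geq 2$ I proceed by induction on $i$, using $\eta_i = -\tfrac{1}{2}[T_{1,1}^{(2)},\eta_{i-1}]$ and the super Jacobi identity, which contributes no sign since $T_{1,1}^{(2k)}$ and $T_{1,1}^{(2)}$ are both even:
\begin{equation*}
[T_{1,1}^{(2k)}, \eta_i] = -\tfrac{1}{2}[[T_{1,1}^{(2k)}, T_{1,1}^{(2)}], \eta_{i-1}] - \tfrac{1}{2}[T_{1,1}^{(2)}, [T_{1,1}^{(2k)}, \eta_{i-1}]].
\end{equation*}
The inner bracket $[T_{1,1}^{(2k)}, T_{1,1}^{(2)}]$ is computed from (\ref{equY4}) with $(i,j,k,l)=(1,1,1,1)$; this produces a shift identity relating $[T_{1,1}^{(2k)},\cdot]$ to $[T_{1,1}^{(2k-2)},\cdot]$, which after reindexing and feeding back the definition of $\eta$ supplies the $[T_{1,1}^{(2k+2)},\eta_{i-2}]$ term of the statement. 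The remaining summand in the display, rewritten via the induction hypothesis applied to $[T_{1,1}^{(2k)},\eta_{i-1}]$, yields the terms $-[T_{1,1}^{(2k)},\eta_{i-1}]$ and $2T_{1,1}^{(2k)}\eta_{i-1}$.

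The real work is symbolic bookkeeping: tracking the $(-1)^m$ signs from (\ref{equY5}) and shepherding the many quadratic products produced by (\ref{equY4}) into the compact three-term form displayed in the statement. The enabling trick throughout is to choose $(m,r)$ in (\ref{equY4}) so that $T_{a,b}^{(0)}=\delta_{ab}$ or $T_{a,b}^{(-1)}=0$ lands in one of the slots on the LHS, killing the second bracket there and isolating the single commutator of interest.
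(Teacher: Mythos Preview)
Your treatment of $i=0$ and $i=1$ is essentially sound (though invoking (\ref{equY4}) with $m=0$ tacitly assumes the convention $T^{(-1)}_{a,b}=0$, which you should justify from the series relation (\ref{equY2})). The real problem is the inductive step $i\geq 2$. After writing the Jacobi decomposition
\[
[T_{1,1}^{(2k)}, \eta_i] \;=\; -\tfrac{1}{2}\bigl[[T_{1,1}^{(2k)}, T_{1,1}^{(2)}], \eta_{i-1}\bigr] \;-\; \tfrac{1}{2}\bigl[T_{1,1}^{(2)}, [T_{1,1}^{(2k)}, \eta_{i-1}]\bigr],
\]
you claim that computing $[T_{1,1}^{(2k)}, T_{1,1}^{(2)}]$ via (\ref{equY4}) yields a ``shift identity'' which supplies the term $[T_{1,1}^{(2k+2)},\eta_{i-2}]$. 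This is incorrect: in fact $[T_{1,1}^{(2k)}, T_{1,1}^{(2)}]=0$ (the paper quotes this from \cite{PS2}, (6.9)), so the \emph{first} summand above vanishes outright. All three terms of (\ref{threeterms}) must therefore come from the \emph{second} summand, namely from applying $-\tfrac{1}{2}\operatorname{ad} T_{1,1}^{(2)}$ to the induction hypothesis for $[T_{1,1}^{(2k)},\eta_{i-1}]$ and once more using $[T_{1,1}^{(2)},T_{1,1}^{(2k)}]=[T_{1,1}^{(2)},T_{1,1}^{(2k+2)}]=0$; for the base $i=2$ one also needs the auxiliary relation $[T_{1,1}^{(2)},T_{1,-1}^{(2k+1)}]=2T_{1,-1}^{(2k+2)}$. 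So your induction \emph{can} be made to work, but the mechanism you describe for producing the $(2k+2)$-shift is wrong, and the crucial commutativity fact is never stated.

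The paper bypasses induction on $i$ entirely. Since $T_{1,1}^{(2)}$ commutes with $T_{1,1}^{(2k)}$, one may pull the bracket all the way inside to get $[T_{1,1}^{(2k)},\eta_i]=\frac{(-1)^i}{2^{i-1}}\operatorname{ad}^{\,i} T_{1,1}^{(2)}(T_{1,-1}^{(2k)})$; then a single use of the explicit formula $[T_{1,1}^{(2)}, T_{1,-1}^{(2k)}] = 2T_{1,-1}^{(2k+1)} + 2T_{1,-1}^{(2k)} - 2T_{1,1}^{(2k)}T_{1,-1}^{(1)}$, followed by one application of $[T_{1,1}^{(2)}, T_{1,-1}^{(2k+1)}]=2T_{1,-1}^{(2k+2)}$, delivers the three terms of (\ref{threeterms}) simultaneously.
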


   \begin{proof} Note that according to (6.4) and (6.5) from
   \cite{PS2}
 $$[T_{1,1}^{(k)}, T_{1,-1}^{(1)}] = (1 + (-1)^k)T_{1,-1}^{(k)}.$$
 Hence $[T_{1,1}^{(2k)}, T_{1,-1}^{(1)}] = 2T_{1,-1}^{(2k)}$.
 Note also that
 \begin{equation}\label{ind1}
   [T_{1,1}^{(2)}, T_{1,-1}^{(2k+1)}] = 2T_{1,-1}^{(2k+2)},
   \end{equation}
\begin{equation}\label{ind2}
  [T_{1,1}^{(2)}, T_{1,-1}^{(2k)}] = 2T_{1,-1}^{(2k+1)} + 2T_{1,-1}^{(2k)} - 2T_{1,1}^{(2k)}T_{1,-1}^{(1)}.
  \end{equation}
 Using (6.9) from  \cite{PS2} we have that
 $[T_{1,1}^{(2)}, T_{1,1}^{(2k)}] = 0$. Hence
 $$[T_{1,1}^{(2k)},\eta_i] =
 (-\frac{1}{2})^i\operatorname{ad}^iT_{1,1}^{(2)}([T_{1,1}^{(2k)}, T_{1,-1}^{(1)}]) =
 \frac{(-1)^i}{2^{i-1}}\operatorname{ad}^iT_{1,1}^{(2)}(T_{1,-1}^{(2k)}).$$
 Next,
 $$\operatorname{ad}^iT_{1,1}^{(2)}(T_{1,-1}^{(2k)}) =
 \operatorname{ad}^{i-1}T_{1,1}^{(2)}([T_{1,1}^{(2)}, T_{1,-1}^{(2k)}]) =
 \operatorname{ad}^{i-1}T_{1,1}^{(2)}(2T_{1,-1}^{(2k+1)} + 2T_{1,-1}^{(2k)} - 2T_{1,1}^{(2k)}T_{1,-1}^{(1)}).$$
 Furthermore,
 $$2\operatorname{ad}^{i-1}T_{1,1}^{(2)}(T_{1,-1}^{(2k+1)}) =
 2\operatorname{ad}^{i-2}T_{1,1}^{(2)}([T_{1,1}^{(2)}, T_{1,-1}^{(2k+1)}]) =
 4\operatorname{ad}^{i-2}T_{1,1}^{(2)}(T_{1,-1}^{(2k+2)}) = (-1)^i2^{i-1}[T_{1,1}^{(2k+2)},\eta_{i-2}],$$
 $$2\operatorname{ad}^{i-1}T_{1,1}^{(2)}(T_{1,-1}^{(2k)}) = (-2)^{i-1}[T_{1,1}^{(2k)},\eta_{i-1}],$$
 $$-2\operatorname{ad}^{i-1}T_{1,1}^{(2)}T_{1,1}^{(2k)}T_{1,-1}^{(1)} =
 -2T_{1,1}^{(2k)}\operatorname{ad}^{i-1}T_{1,1}^{(2)}(T_{1,-1}^{(1)}) =
 -2T_{1,1}^{(2k)}((-2)^{i-1}\eta_{i-1}) = (-2)^iT_{1,1}^{(2k)}\eta_{i-1}.$$
 Thus
 $$[T_{1,1}^{(2k)},\eta_i] = \frac{1}{2^{i-1}}(2^{i-1}[T_{1,1}^{(2k+2)},\eta_{i-2}] -
 2^{i-1}[T_{1,1}^{(2k)},\eta_{i-1}] + 2^iT_{1,1}^{(2k)}\eta_{i-1}),$$ which gives (\ref{threeterms}).
\end{proof}
\begin{corollary}\label{fromcommutators} Let $\mathbf A$ be the commutative subalgebra in $YQ(1)$ generated by $T_{1,1}^{(2k)}$ for $k\geq 0$.
  Then $YQ(1)=\mathbf C\mathbf A=\mathbf A\mathbf C$.
\end{corollary}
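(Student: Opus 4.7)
The plan is to establish both equalities $\mathbf{A}\mathbf{C} = YQ(1)$ and $\mathbf{C}\mathbf{A} = YQ(1)$ by parallel arguments. Lemma \ref{center}(3) exhibits a generating set $\{\eta_0\} \cup \{T_{1,1}^{(2k)}\}_{k \geq 1}$ of $YQ(1)$ in which $\eta_0 \in \mathbf{C}$ and every $T_{1,1}^{(2k)} \in \mathbf{A}$. Since $\mathbf{A}$ and $\mathbf{C}$ are unital, each of $\mathbf{A}\mathbf{C}$ and $\mathbf{C}\mathbf{A}$ already contains this generating set, so only multiplicative closure is at stake; for $\mathbf{A}\mathbf{C}$ this reduces to $\mathbf{C}\mathbf{A} \subseteq \mathbf{A}\mathbf{C}$ and symmetrically for $\mathbf{C}\mathbf{A}$. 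Because $T_{1,1}^{(2k)}$ is even, $\eta_i T_{1,1}^{(2k)} = T_{1,1}^{(2k)} \eta_i - [T_{1,1}^{(2k)}, \eta_i]$, and at the level of generators both inclusions amount to
\begin{equation*}
[T_{1,1}^{(2k)}, \eta_i] \in \mathbf{A}\mathbf{C} \cap \mathbf{C}\mathbf{A} \qquad \text{for all } i, k \geq 0.
\end{equation*}

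I would prove this by induction on $i$. The step $i \geq 2$ is immediate from the three-term recurrence (\ref{threeterms}) of Lemma \ref{commutators}: two of the summands are previous commutators handled by induction, and the third, $T_{1,1}^{(2k)} \eta_{i-1}$, lies in $\mathbf{A}\mathbf{C}$ trivially and in $\mathbf{C}\mathbf{A}$ by the induction hypothesis at level $i-1$ (since $T_{1,1}^{(2k)} \eta_{i-1} = \eta_{i-1} T_{1,1}^{(2k)} + [T_{1,1}^{(2k)}, \eta_{i-1}]$). The base cases $i = 0, 1$, after unwinding the explicit formulas in Lemma \ref{commutators}, reduce the whole assertion to the key claim
\begin{equation*}
T_{1,-1}^{(m)} \in \mathbf{A}\mathbf{C} \cap \mathbf{C}\mathbf{A} \qquad \text{for every } m \geq 1,
\end{equation*}
which I expect to be the main obstacle.

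For the key claim I would induct on $m$ by inverting $(-2)^{m-1}\eta_{m-1} = \operatorname{ad}^{m-1} T_{1,1}^{(2)}(T_{1,-1}^{(1)})$. A secondary induction on $j$, using (\ref{ind1}), (\ref{ind2}), the identity $[T_{1,1}^{(l)}, T_{1,-1}^{(1)}] = (1+(-1)^l)\,T_{1,-1}^{(l)}$ recalled in the proof of Lemma \ref{commutators}, and the fact that $T_{1,1}^{(2)}$ commutes with $\mathbf{A}$, shows that $\operatorname{ad}^j T_{1,1}^{(2)}(T_{1,-1}^{(1)})$ admits both a left and a right expansion
\begin{equation*}
\sum_{l \leq j+1} a_l^{(j)} T_{1,-1}^{(l)} \;=\; \sum_{l \leq j+1} T_{1,-1}^{(l)} \tilde a_l^{(j)}, \qquad a_l^{(j)}, \tilde a_l^{(j)} \in \mathbf{A},
\end{equation*}
with top coefficients $a_{j+1}^{(j)} = \tilde a_{j+1}^{(j)} = 2^j$. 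Solving for the leading term gives
\begin{equation*}
T_{1,-1}^{(m)} = (-1)^{m-1}\eta_{m-1} - \frac{1}{2^{m-1}} \sum_{l < m} a_l^{(m-1)} T_{1,-1}^{(l)} = (-1)^{m-1}\eta_{m-1} - \frac{1}{2^{m-1}} \sum_{l < m} T_{1,-1}^{(l)} \tilde a_l^{(m-1)}.
\end{equation*}
Substituting the inductive hypothesis $T_{1,-1}^{(l)} \in \mathbf{A}\mathbf{C} \cap \mathbf{C}\mathbf{A}$ for $l < m$, and exploiting the commutativity of $\mathbf{A}$ to absorb the outer $a_l^{(m-1)}$ (respectively $\tilde a_l^{(m-1)}$) into the left (respectively right) $\mathbf{A}$-factor, places $T_{1,-1}^{(m)}$ simultaneously in $\mathbf{A}\mathbf{C}$ and in $\mathbf{C}\mathbf{A}$. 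This closes the key claim, completes the induction on $i$, and yields $YQ(1) = \mathbf{A}\mathbf{C} = \mathbf{C}\mathbf{A}$.
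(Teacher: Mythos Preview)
Your proposal is correct and follows essentially the same route as the paper. Both arguments reduce the inclusions $\mathbf{C}\mathbf{A}\subseteq\mathbf{A}\mathbf{C}$ and $\mathbf{A}\mathbf{C}\subseteq\mathbf{C}\mathbf{A}$, via Lemma~\ref{commutators}, to the key claim $T_{1,-1}^{(m)}\in\mathbf{A}\mathbf{C}\cap\mathbf{C}\mathbf{A}$, and then prove this key claim by induction on $m$ using (\ref{ind1}) and (\ref{ind2}). The only organizational difference is that the paper steps through (\ref{ind1}), (\ref{ind2}) one index at a time and invokes the normalizing property $[T_{1,1}^{(2)},\mathbf{C}]\subset\mathbf{C}$, whereas you first iterate those relations to write $\operatorname{ad}^{m-1}T_{1,1}^{(2)}(T_{1,-1}^{(1)})$ as a triangular $\mathbf{A}$-combination (on the left and on the right) of the $T_{1,-1}^{(l)}$ and then invert; the content is the same.
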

\begin{proof} We will show that $\mathbf C\mathbf A\subset\mathbf A\mathbf C$. (The proof of the opposite inclusion is similar.)
  Let $D_i$ denote the span of $\eta_j$ for $j<i$. By Lemma \ref{commutators} for $i\geq 2$  we have that $\eta_iT^{(2k)}_{1,1}=T^{(2k)}_{1,1}\eta_i$ modulo
  $D_i\mathbf A+\mathbf AD_i$. Therefore, it suffices to show that $\eta_i\mathbf A\in \mathbf A\mathbf C$ for $i=0,1$. Furthermore,
  the relations in the second line of (\ref{threeterms}) imply that it suffices to show that $T_{1,-1}^{(m)}\in\mathbf C\mathbf A\cap \mathbf A\mathbf C$.
  This can be done by induction on $m$. The case $m=1$ is trivial as $\eta_0=T_{1,-1}^{(m)}$. For the step of induction if $m$ is even we employ (\ref{ind1})
  and if $m$ is odd (\ref{ind2}) and the relation
  $$[T_{1,1}^{(2)},\mathbf C]\subset\mathbf C.$$ Finally, since $\mathbf A$ and $\mathbf C$ generate $YQ(1)$ we get  $YQ(1)=\mathbf C\mathbf A=\mathbf A\mathbf C$.

  \end{proof}

Recall that  for any Hopf superalgebra $R$ the ideal $(R_1)$ generated by all odd elements is a Hopf ideal and the quotient $R/(R_1)$ is a Hopf algebra.
\begin{lemma}\label{quotient} The quotient $YQ(1)/(YQ(1)_1)$ is isomorphic to $\mathbf A\simeq\mathbb C[T_{1,1}^{(2k)}]_{k>0}$ with comultiplication
  $$\Delta T_{1,1}(u^{-2})=T_{1,1}(u^{-2})\otimes T_{1,1}(u^{-2}),$$
  where $T_{1,1}(u^{-2})=\sum T_{1,1}^{(2k)}u^{-2k}$.
\end{lemma}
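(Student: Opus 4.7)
The plan is to construct an isomorphism of Hopf algebras $YQ(1)/(YQ(1)_1)\simeq\mathbf A$ by first identifying the quotient with $\mathbf A$ as an algebra and then transferring the coproduct via the given formula for $\Delta(T_{i,j}^{(r)})$.

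For surjectivity of the composition $\mathbf A\hookrightarrow YQ(1)\twoheadrightarrow YQ(1)/(YQ(1)_1)$, I invoke Lemma \ref{center}(3): $YQ(1)$ is generated as an algebra by $\eta_0$ and the $T_{1,1}^{(2i)}$. Since $\eta_0$ is odd it vanishes in the quotient, so the quotient is already generated by the images of the $T_{1,1}^{(2i)}$, all of which lie in $\mathbf A$.

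For injectivity -- and simultaneously for the algebraic independence of the $T_{1,1}^{(2k)}$ asserted in the statement -- I construct a separating family of one-dimensional representations. For each $p\ge 1$ and each $\mathbf t=(t_1,\dots,t_p)\in(\mathbb C^*)^p$ take $n\ge 2p$ and consider the one-dimensional module $S(\mathbf t,0)$ from Section 4 (its dimension is $2^{q/2}=1$ since $q=0$). By Lemma \ref{one-dim}, $u_{2k}(0)$ acts on $S(\mathbf t,0)$ by the scalar $\sigma_k(\mathbf t)$ while $u_{2k+1}(0)$ acts by $0$. Composing with $\varphi_n$ (which sends $T_{1,1}^{(m)}\mapsto(-1)^m u_m(0)$) yields a one-dimensional $YQ(1)$-module $\rho_{\mathbf t}$ with $\rho_{\mathbf t}(T_{1,1}^{(2k)})=\sigma_k(\mathbf t)$ and $\rho_{\mathbf t}(T_{1,1}^{(2k+1)})=0$; since every odd element necessarily acts by zero in a one-dimensional module, $\rho_{\mathbf t}$ factors through $YQ(1)/(YQ(1)_1)$. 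Now given any $F(T_{1,1}^{(2)},\dots,T_{1,1}^{(2N)})\in\mathbf A\cap(YQ(1)_1)$, applying $\rho_{\mathbf t}$ with $p=N$ yields $F(\sigma_1(\mathbf t),\dots,\sigma_N(\mathbf t))=0$ for every $\mathbf t\in(\mathbb C^*)^N$. Since $\mathbf t\mapsto(\sigma_1(\mathbf t),\dots,\sigma_N(\mathbf t))$ has Zariski-dense image $\{c_N\ne 0\}\subset\mathbb C^N$, $F$ is the zero polynomial, giving $\mathbf A\simeq YQ(1)/(YQ(1)_1)\simeq\mathbb C[T_{1,1}^{(2k)}]_{k>0}$. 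The same density argument applied to $T_{1,1}^{(2k+1)}$, which acts by zero in every $\rho_{\mathbf t}$, shows that $T_{1,1}^{(2k+1)}\in(YQ(1)_1)$ for all $k\ge 0$.

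For the coproduct, specializing the Hopf structure formula to $i=j=1$ and $r=2k$ gives
$$\Delta(T_{1,1}^{(2k)})=\sum_{s=0}^{2k}T_{1,1}^{(s)}\otimes T_{1,1}^{(2k-s)}-\sum_{s=0}^{2k}T_{1,-1}^{(s)}\otimes T_{-1,1}^{(2k-s)}.$$
The second sum lies in $YQ(1)\otimes(YQ(1)_1)+(YQ(1)_1)\otimes YQ(1)$ because both tensor factors are odd, and every term in the first sum with $s$ odd lies in the same Hopf ideal by the previous paragraph. What survives in the quotient is $\sum_{l=0}^k T_{1,1}^{(2l)}\otimes T_{1,1}^{(2k-2l)}$; packaging over $k$ in powers of $u^{-2}$ gives $\Delta T_{1,1}(u^{-2})=T_{1,1}(u^{-2})\otimes T_{1,1}(u^{-2})$. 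The most delicate step is the density argument for injectivity, where one needs to track the sign in $\varphi_n(T_{1,1}^{(m)})=(-1)^m u_m(0)$ and verify that $(\sigma_1,\dots,\sigma_N):(\mathbb C^*)^N\to\mathbb C^N$ indeed has Zariski-dense image.
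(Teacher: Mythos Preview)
Your proof is correct and follows essentially the same approach as the paper: surjectivity via Lemma~\ref{center}(3), injectivity by separating elements of $\mathbf A$ using the one-dimensional modules $S(\mathbf t,0)$ and Lemma~\ref{one-dim}, and the coproduct formula by noting $T_{1,1}^{(2k+1)}\in(YQ(1)_1)$. Your write-up is more explicit than the paper's (in particular you spell out the Zariski-density argument and the coproduct computation), but the ideas coincide.
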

\begin{proof} Since all $\eta_i$ generate $(YQ(1)_1)$,
  Lemma \ref{center} implies $YQ(1)=\mathbf A+(YQ(1)_1)$. Therefore there exists a surjective homomorphism $$\mathbf A\to YQ(1)/(YQ(1)_1).$$
  To prove that it is injective we need to show that $\mathbf A\cap (YQ(1)_1)=\{0\}$. It suffices to check that for any $y\in\mathbf A$ there exists a one-dimensional $YQ(1)$-module $\Gamma$ such that $y\Gamma\neq 0$. Let $y=P(T_{1,1}^{(2)}\dots T_{1,1}^{(2k)})$ for some polynomial  $P$ and consider the module
  $\Gamma=S(\mathbf t,0)$ as in Lemma \ref{one-dim}. Then $y$ acts on $\Gamma$ by $P(\sigma_1(\mathbf t),\dots,\sigma_k(\mathbf t))$. By a suitable choice of
  $\mathbf t$ we can get $P(\sigma_1(\mathbf t),\dots,\sigma_k(\mathbf t))\neq 0$.
  The comultiplication formula is straightforward as all $T^{(2k+1)}_{1,1}\in (YQ(1)_1)$.
\end{proof}
Let $f(u)=1+\sum_{k>0}f_{2k}u^{-2k}$. We denote by $\Gamma_f$ the
one-dimensional $\mathbf A$-module,  where
  the action of $T_{1,1}(u^{-2})$ is given by the generating function $f(u)$.
\begin{lemma}\label{one-dim-Y} The isomorphism classes of
one-dimensional $YQ(1)$-modules are in bijection with the set
$\{\Gamma_f\}$. Furthermore, we have the identity
  $\Gamma_f\otimes \Gamma_g\simeq \Gamma_{fg}$.
\end{lemma}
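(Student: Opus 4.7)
The plan is to deduce this lemma directly from the Hopf-algebra quotient identified in Lemma \ref{quotient}. The key observation is that any one-dimensional super-module is concentrated in a single parity, so odd operators, which must shift parity, are forced to act by zero. In particular every odd element of $YQ(1)$ annihilates such a module, so the action factors through the quotient $YQ(1)/(YQ(1)_1)$, which by Lemma \ref{quotient} is isomorphic to the (commutative) polynomial algebra $\mathbf A \simeq \mathbb C[T_{1,1}^{(2k)}]_{k>0}$.

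Once the action is reduced to $\mathbf A$, classifying one-dimensional modules amounts to classifying algebra homomorphisms $\mathbf A \to \mathbb C$. Since $\mathbf A$ is free polynomial in the generators $T_{1,1}^{(2k)}$, such a character is determined by an arbitrary assignment of scalars $f_{2k} \in \mathbb C$ to each $T_{1,1}^{(2k)}$, i.e.\ by a series $f(u)=1+\sum_{k>0}f_{2k}u^{-2k}$. This is precisely the data defining $\Gamma_f$, establishing the bijection in the first part of the lemma.

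For the second part, I would compute the action of $T_{1,1}(u^{-2})$ on $\Gamma_f \otimes \Gamma_g$ using the comultiplication formula from Lemma \ref{quotient}, namely $\Delta T_{1,1}(u^{-2}) = T_{1,1}(u^{-2}) \otimes T_{1,1}(u^{-2})$. On the tensor product the element acts as the product of scalars $f(u)g(u)$, which is exactly the action on $\Gamma_{fg}$. Since both modules are one-dimensional and have identical central/character data, the isomorphism $\Gamma_f\otimes\Gamma_g\simeq\Gamma_{fg}$ follows.

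The only place requiring any care is the parity argument at the very start, where one must check that a ``one-dimensional'' module cannot carry non-trivial action of odd elements of $YQ(1)$; but this is immediate from the super vector space conventions set up in Section 2. Everything else is a direct transcription from the Hopf algebra structure of $\mathbf A$ supplied by Lemma \ref{quotient}.
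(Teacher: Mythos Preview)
Your proposal is correct and follows essentially the same approach as the paper: the paper's proof is the single sentence ``Lemma~\ref{quotient} reduces the statement to classification of one-dimensional $\mathbf A$-modules which is straightforward,'' and you have simply unpacked this by making explicit the parity argument that forces odd elements to act by zero, the parametrization of characters of the polynomial algebra $\mathbf A$, and the use of the group-like comultiplication for the tensor identity.
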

\begin{proof} Lemma \ref{quotient} reduces the statement to classification of one-dimensional $\mathbf A$-modules which is straightforward.
\end{proof}
\begin{theorem}\label{mainYangian} Any simple finite-dimensional $YQ(1)$-module is isomorphic to $V(\mathbf s)\otimes \Gamma_f$ or
  $\Pi V(\mathbf s)\otimes \Gamma_f$
  for some regular typical
  $\mathbf s$ and $f(u)=1+\sum_{k>0}f_{2k}u^{-2k}$. Furthermore, $V(\mathbf s)\otimes \Gamma_f$ and $V(\mathbf s')\otimes \Gamma_g$ are isomorphic up to
  change of parity if and only if $\mathbf s'$
  is obtained from $\mathbf s$ by permutation of coordinates and $f(u)=g(u)$.
  \end{theorem}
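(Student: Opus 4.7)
I propose to combine the factorization $YQ(1)=\mathbf{C}\mathbf{A}=\mathbf{A}\mathbf{C}$ from Corollary \ref{fromcommutators} with the classifications in Corollary \ref{bigclifford} (simple $\mathbf{C}$-modules) and Lemma \ref{one-dim-Y} (one-dimensional $YQ(1)$-modules). Let $M$ be a simple finite-dimensional $YQ(1)$-module with central character $\chi$. By Lemma \ref{necessary}, $\chi(u)$ is rational; Proposition \ref{existence} together with Corollary \ref{bigclifford} then produces a typical regular $\mathbf{s}$ such that $V(\mathbf{s})$ has the same central character $\chi$. Since the center of $YQ(1)$ lies in $\mathbf{C}$ by Lemma \ref{center}, every simple $\mathbf{C}$-constituent of $M|_{\mathbf{C}}$ has central character $\chi$ and hence, by Corollary \ref{bigclifford}, is isomorphic to $V(\mathbf{s})$ or $\Pi V(\mathbf{s})$; typical regularity makes the form $B_{\chi}$ nondegenerate and the associated Clifford quotient of $\mathbf{C}$ semisimple, so $M|_{\mathbf{C}}\simeq V(\mathbf{s})^{a}\oplus\Pi V(\mathbf{s})^{b}$.

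The main obstacle is showing that $a+b=1$, i.e.\ that $M$ is $\mathbf{C}$-simple up to parity. My strategy is to exhibit a common $\mathbf{A}$-eigenvector $v\in M$ (not merely generalized) lying in a single $\mathbf{C}$-isotypic summand; then $YQ(1)v=\mathbf{C}\mathbf{A}v=\mathbf{C}v$ is a cyclic $\mathbf{C}$-module inside a single isotype, forcing the multiplicity to be one. To produce such an eigenvector I would first verify on $V(\mathbf{s})$ that $\mathbf{A}$ acts semisimply with distinct eigenvalues for typical regular $\mathbf{s}$, via the explicit formulas $\varphi_n(T_{1,1}^{(2k)})=(-1)^{k}u_{2k}(0)$ and the description of $u_{2k}(0)$ on the Clifford module $V(\mathbf{s})$; this semisimplicity, combined with the fact that the spectrum of $T_{1,1}(u^{-2})$ on $M$ is constrained (up to a common scalar twist) by the shared central character $\chi$, transfers to $M$.

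Once $M|_{\mathbf{C}}\simeq V(\mathbf{s})$ (up to parity) is established, fix common $\mathbf{A}$-eigenvectors $v_{0}\in V(\mathbf{s})$ and $v\in M$ with $T_{1,1}(u^{-2})v_{0}=g(u)v_{0}$ and $T_{1,1}(u^{-2})v=h(u)v$, and set $f(u):=h(u)g(u)^{-1}\in 1+u^{-2}\mathbb{C}[[u^{-2}]]$. Lemma \ref{quotient}, which gives $\Delta T_{1,1}(u^{-2})\equiv T_{1,1}(u^{-2})\otimes T_{1,1}(u^{-2})$ modulo odd elements, ensures $T_{1,1}(u^{-2})(v_{0}\otimes 1)=f(u)g(u)(v_{0}\otimes 1)=h(u)(v_{0}\otimes 1)$ in $V(\mathbf{s})\otimes\Gamma_f$. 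The $\mathbf{C}$-linear map $V(\mathbf{s})\otimes\Gamma_f\to M$ sending $v_{0}\otimes 1\mapsto v$ then intertwines $\mathbf{A}$ and, by $YQ(1)=\mathbf{A}\mathbf{C}$, is $YQ(1)$-linear; simplicity of both sides makes it an isomorphism.

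Uniqueness is comparatively routine: if $V(\mathbf{s})\otimes\Gamma_f\simeq V(\mathbf{s}')\otimes\Gamma_g$ up to parity, matching central characters via Corollary \ref{character} and Lemma \ref{cent}, and using that typical regularity identifies the core with the full tuple, forces $\mathbf{s}'=\sigma(\mathbf{s})$ for some permutation $\sigma$. Then $\Gamma_f\simeq\Gamma_g$ as $YQ(1)$-modules, so $f=g$ by Lemma \ref{one-dim-Y}.
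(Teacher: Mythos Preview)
Your overall architecture---reduce to a fixed central character, use $YQ(1)=\mathbf{C}\mathbf{A}$, identify $M|_{\mathbf C}$, then twist---matches the paper's, but the crucial step, showing that the $\mathbf C$-multiplicity in $M$ is one, is not established. You assert that ``the spectrum of $T_{1,1}(u^{-2})$ on $M$ is constrained (up to a common scalar twist) by the shared central character $\chi$''; this is precisely the conclusion of the theorem, not something known in advance. The center of $YQ(1)$ is generated by the $Z_{2i}$, which lie in $\mathbf C$, so the central character $\chi$ says nothing directly about the $\mathbf A$-spectrum on an unknown simple $M$. Without that, there is no reason an $\mathbf A$-eigenvector should sit inside a single irreducible $\mathbf C$-summand (and such a decomposition of $M|_{\mathbf C}\simeq V(\mathbf s)^a\oplus\Pi V(\mathbf s)^b$ is not even canonical when $a$ or $b$ exceeds $1$). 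Note also that the weaker fact $M=\mathbf C v$ for an $\mathbf A$-eigenvector $v$---which does follow from $\mathbf A\mathbf C=\mathbf C\mathbf A$---only says $M$ is cyclic over the simple Clifford algebra $\mathbf C_\chi/R_\chi$, and cyclic modules over a matrix algebra can have arbitrary multiplicity up to the rank.

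The paper closes this gap by a completely different device: it shows (Lemma~\ref{radical}) that the kernel $J_\chi$ of the map $YQ(1)^\chi\to\prod_f\End(V(\mathbf s)\otimes\Gamma_f)$ equals $\mathbf A R_\chi$ and annihilates every simple $YQ(1)^\chi$-module, so $YQ(1)^\chi/J_\chi$ embeds in a product of matrix superalgebras of size $\dim V(\mathbf s)$; the Amitsur--Levitzki identity together with Jacobson density then forces $\dim M\le\dim V(\mathbf s)$, giving multiplicity one. After that, the paper does \emph{not} build an explicit $\mathbf C$-linear isomorphism as you propose (your map $c\,v_0\otimes 1\mapsto c\,v$ need not be well-defined, since $\operatorname{Ann}_{\mathbf C}(v_0\otimes 1)$ and $\operatorname{Ann}_{\mathbf C}(v)$ are generally distinct left ideals); instead it analyzes the induced module $I=(YQ(1)^\chi/J_\chi)\otimes_{\mathbf A}\Gamma_{\mathbf 1}$ and uses a dimension count against $\dim(\mathbf C_\chi/R_\chi)$ (Lemma~\ref{induced}) to show that every simple with $\mathbf 1\in P(M)$ already appears among the $V(\mathbf s)\otimes\Gamma_{\theta^{-1}}$. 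For uniqueness, your reduction to $\Gamma_f\simeq\Gamma_g$ is incomplete: you need the observation $P(V(\mathbf s)\otimes\Gamma_f)=P(V(\mathbf s))\cdot f$ (equation~\eqref{useful}) and torsion-freeness of $1+u^{-2}\mathbb C[[u^{-2}]]$ to conclude $f=g$ from equality of these finite sets.
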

  \begin{proof} We start with regular typical $\mathbf s$ and identify $V(\mathbf s)$ with $V(\mathbf s)\otimes\Gamma_{\mathbf 1}$. Let $\chi$ be the central character of $V(\mathbf s)$ and consider only simple modules with  central character $\chi$. We denote by $YQ(1)^{\chi}$
    the quotient of $YQ(1)$ by the ideal generated by $\operatorname{Ker}\chi$. Note that $YQ(1)^{\chi}=\mathbf C_\chi\mathbf A$.

 Note that the central characters
    of $V(\mathbf s)$ and $V(\mathbf s)\otimes \Gamma_f$ are the same and they are isomorphic as $\mathbf C_\chi$-modules. For any finite-dimensional
    $YQ(1)$-module $M$ and $\theta(u)=1+\sum \theta_iu^{-2i}$
    set $M^{\theta}=\bigcap_{k>0,m>0} \operatorname{Ker}(T_{1,1}^{(2k)}-\theta_k)^m$.
    Clearly, we have an isomorphism of $\mathbf A$-modules
    $$M\simeq\bigoplus_{\theta\in P(M)} M^{\theta},$$
    for some finite set $P(M)$. Furthermore, we have the following obvious relations
    \begin{equation}\label{useful}
  P(M\otimes\Gamma_f)=P(M)f,\quad (M\otimes\Gamma_f)^{\theta f}=M^\theta \otimes\Gamma_f.
  \end{equation}
    This implies that $P(V(\mathbf s)\otimes \Gamma_f)=P(V(\mathbf s)\otimes \Gamma_g)$ if and only if $f=g$. Therefore we obtain the second assertion
    of the theorem.

    Consider the natural homomorphism
    $$F_\chi:YQ(1)^{\chi}\to\prod_f\End_{\mathbb C}(V(\mathbf s)\otimes \Gamma_f).$$

    \begin{lemma}\label{radical} Let $J_\chi=\ker F_\chi$. Then
      \begin{enumerate}
      \item $J_\chi=\mathbf{A}R_\chi=R_\chi \mathbf{A}$, where $R_{\chi}=\operatorname{Ann}_{\mathbf C_{\chi}}V(\mathbf s)$ is the Jacobson radical of $\mathbf{C}_\chi$;
        \item $J_\chi$ acts by zero on any simple finite-dimensional $YQ(1)^{\chi}$-module.
        \end{enumerate}
      \end{lemma}
      \begin{proof} Let us prove (1). Note that $T_{1,1}^{(2k)}$ acts on $V(\mathbf s)\otimes\Gamma_f$ as $\sum_{i=0}^k T_{1,1}^{2i}\otimes T_{1,1}^{2k-2i}$ and $\eta_0$ acts as $T_{1,-1}^{(1)}\otimes 1$. Therefore
by (\ref{etadef}) $\eta_i$ acts as $\eta_i\otimes 1$ for all $i\geq 0$ and hence
every $\zeta\in \mathbf C$ acts as $\zeta\otimes 1$. This implies $R_\chi\subset J_\chi$. Assume
        $$X=\sum_{i=0}^{k}\zeta_iT_{1,1}^{(2i)}\in J_\chi,\ \zeta_i\in\mathbf C_{\chi}.$$
          Set $f=1+u^{-2k}$. Then since $X$ annihilates both $V(\mathbf s)$ and $V(\mathbf s)\otimes\Gamma_f$ and $X$ acts on the latter module as $X\otimes 1+\zeta_{k}\otimes 1$ we obtain that $\zeta_k\in R_\chi$.
         Repeating this argument we obtain that all $\zeta_i\in R_\chi$.
       Thus, $J_\chi=R_{\chi}\mathbf{A}$. The equality  $\mathbf{A}R_\chi=R_\chi \mathbf{A}$ follows from $\mathbf{A}\mathbf{C}_\chi=\mathbf{C}_\chi \mathbf{A}$ by symmetry.

        To prove (2) note that $J_\chi=\mathbf{A}R_{\chi}$ annihilates the induced module $YQ(1)^{\chi}\otimes_{\mathbf C_{\chi}}V(\mathbf s)$ and hence any its quotient. On the other hand, up to switch of parity,
        any simple finite-dimensional
        $YQ(1)^{\chi}$-module is a quotient of this induced module. Hence the statement.
      \end{proof}

      \begin{corollary} Let $M$ be a finite-dimensional simple $YQ(1)^{\chi}$-module. Then $M$ is isomorphic to $V(\mathbf s)$ or $\Pi V(\mathbf s)$ for the regular typical $\mathbf s$ as a module over $\mathbf C_\chi$.
      \end{corollary}
      \begin{proof} The algebra $YQ(1)^{\chi}/J_\chi$ is a subalgebra in the product of matrix algebras of the size $\dim V(\mathbf s)$. Hence $\dim M\leq \dim V(\mathbf s)$.\footnote{ By the Amitsur-Levitzki identity and the Jacobson density theorem} Since $R_\chi$ annihilates $M$,
       the module $M$ is isomorphic to a direct sum of several copies of $V(\mathbf s)$ and $\Pi V(\mathbf s)$ as a module over $\mathbf C_\chi$. This implies the statement.
      \end{proof}
      \begin{remark} By Corollary \ref{bigclifford}, $\mathbf C_\chi/R_\chi\simeq C_{\mathbf s}$. Furthermore, $J_\chi\cap \mathbf C_\chi=R_\chi$.
        \end{remark}

    Denote by $\mathbf 1$ the function $\theta(u)=1$ and assume that $M$ is a simple finite-dimensional $YQ(1)^{\chi}$-module such that $M_0^{\mathbf 1}\neq 0$. Then $M$ is a quotient of the
    induced module $$I=(YQ(1)^\chi/J_\chi)\otimes_{\mathbf A}\Gamma_{\mathbf 1}.$$ Note that $$\dim I\leq\dim (\mathbf{C}_\chi/R_\chi)$$ but we will see later that the equality takes place.
    \begin{lemma}\label{induced} Let M be a simple $YQ(1)^\chi$-module such that  $M_0^{\mathbf 1}\neq 0$ and $M$ remains simple after restriction to
      $\mathbf C_\chi$. Then there exists a quotient $U$ of $I$ with all simple
      subquotients isomorphic to $M$ and length equal to  $\dim M_0^{\mathbf 1}$.
    \end{lemma}
    \begin{proof} Let $U=M\otimes (M_0^{\mathbf 1})^*$. It obviously has a filtration with all quotients isomorphic to $M$ and hence it satisfies the desired property.
      It remains to construct a surjective map $I\to U$. By Frobenius reciprocity we have a canonical isomorphism
      $$\Hom_{YQ(1)}(I,U)\simeq \Hom_{\mathbf A}(\Gamma_{\mathbf 1}, U)\simeq  \Hom_{\mathbf A}(\Gamma_{\mathbf 1},M^{\mathbf 1}\otimes (M_0^{\mathbf 1})^*).$$
      Consider the identity map in $\Hom_{\mathbf A}(\Gamma_{\mathbf 1},M^{\mathbf 1}\otimes (M_0^{\mathbf 1})^*)$ and denote by $\gamma$ the corresponding map
      in $\Hom_{YQ(1)}(I,U)$. Let us prove that $\gamma$ is surjective.
      First, observe that any $y\in \mathbf C$ acts on $M\otimes (M_0^{\mathbf 1})^*$ as $y\otimes 1$ by the same argument as
      in the proof of Lemma \ref{radical}.
     Choose a basis $\{v_1,\dots,v_r\}$ in $M_0^{\mathbf 1}$ and let
      $\{w_1,\dots,w_r\}$ be the corresponding dual basis in $(M_0^{\mathbf 1})^*$. By construction $\sum v_i\otimes w_i\in\im\gamma$. Since $M$ is a simple
      $\mathbf C_\chi$-module, by the Jacobson density theorem for every $i=1,\dots r$ there exists $y_i\in\mathbf C_\chi$ such that $y_iv_j=\delta_{i,j}v_1$.
      This implies $v_1\otimes w_i\in\im\gamma$ for all $i$ and hence $M\otimes w_i\in\im\gamma$ for all $i$. The surjectivity of $\gamma$ follows immediately.
    \end{proof}

    Now let us prove the first assertion of the theorem. Consider first the case $\mathbf s=(s_1,\dots,s_n)$ when $n$ is even. Then $\dim V(\mathbf s)=2^{n/2}$,
    $V(\mathbf s)$ is not isomorphic to  $\Pi V(\mathbf s)$ and $\dim (\mathbf C_\chi/R_\chi)=2^n$. By Lemma \ref{induced} and (\ref{useful}) for every $\theta\in P(V(\mathbf s))$ we have
    $$[I:V(\mathbf s)\otimes\Gamma_{\theta^{-1}}]\geq \dim V(\mathbf s)^\theta_0,\quad [I:\Pi V(\mathbf s)\otimes\Gamma_{\theta^{-1}}]\geq \dim V(\mathbf s)^\theta_1.$$
    On the other hand, $\dim I\leq\dim(\mathbf C_\chi/R_\chi)$. Hence any simple subquotient of $I$ is isomorphic to $V(\mathbf s)\otimes\Gamma_{\theta^{-1}}$ or
    $\Pi V(\mathbf s)\otimes\Gamma_{\theta^{-1}}$ and $\dim I=\dim(\mathbf C_\chi/R_\chi)$. Therefore every simple $YQ(1)^\chi$-module $M$ with $\mathbf 1\in P(M)$ is isomorphic to
    $V(\mathbf s)\otimes\Gamma_{\theta^{-1}}$ or
    $\Pi V(\mathbf s)\otimes\Gamma_{\theta^{-1}}$. If $f\in P(M)$ then $M$ is isomorphic to   $V(\mathbf s)\otimes\Gamma_{f\theta^{-1}}$ or
    $\Pi V(\mathbf s)\otimes\Gamma_{f\theta^{-1}}$. This implies the statement.

    Let us consider the case of odd $n$. Then $\dim V(\mathbf s)=2^{(n+1)/2}$,
    $V(\mathbf s)$ is isomorphic to  $\Pi V(\mathbf s)$ and $\dim (\mathbf C_\chi/R_\chi)=2^n$.  By Lemma \ref{induced}
    and (\ref{useful}) for every $\theta\in P(V(\mathbf s))$ we have
    $$[I:V(\mathbf s)\otimes\Gamma_{\theta^{-1}}]\geq \dim V(\mathbf s)^\theta_0=\dim  V(\mathbf s)^\theta_1.$$
    By counting dimensions we again obtain that every simple subquotient of $I$ is isomorphic to $V(\mathbf s)\otimes\Gamma_{\theta^{-1}}$. The end of the proof is
    the same as in the previous case.
    \end{proof}
    Let us conclude by stating the relation between $W$-modules and $YQ(1)$-modules.
    \begin{proposition} The simple $YQ(1)$-module $V(\mathbf s)\otimes \Gamma_f$ is lifted from some $W^{m+n}$-module if and only if $f\in\mathbb C[u^{-2}]$. Moreover, the smallest
      $m$ is equal to the degree of the polynomial $f$.
    \end{proposition}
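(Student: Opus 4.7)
The plan is to combine the classification of simple $W^{m+n}$-modules in Theorem \ref{irreducible} with the commutative diagram (\ref{diagram}) and the uniqueness part of Theorem \ref{mainYangian}. The key observation driving the whole argument is that, via the diagram, the pullback to $YQ(1)$ of a simple $W^{m+n}$-module $S(\mathbf t,\lambda)$ with $r+2p+q=m+n$ is a $YQ(1)$-module of the form $V(\lambda)\otimes\Gamma_g$ for an explicit $g\in\mathbb C[u^{-2}]$.

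For the ``if'' direction, I would assume $f\in\mathbb C[u^{-2}]$ with $\deg f=2p$ in $u^{-1}$, factor $f(u)=\prod_{i=1}^p(1+t_i u^{-2})$ with all $t_i\neq 0$, set $\mathbf t=(t_1,\dots,t_p)$ and $m=2p$, and consider the simple $W^{m+n}$-module $S(\mathbf t,\mathbf s)$. By Corollary \ref{cor:insert} this decomposes over $W^m\otimes W^n$ as $S(\mathbf t,0)\boxtimes V(\mathbf s)$. Using diagram (\ref{diagram}), I compute the pullback to $YQ(1)$ as the $\Delta$-tensor product of the pullbacks of each factor: $V(\mathbf s)$ is preserved, while on $S(\mathbf t,0)$ the operator $T_{1,1}^{(2k)}$ acts by $\varphi_m(T_{1,1}^{(2k)})=u_{2k}(0)$, which equals $\sigma_k(\mathbf t)$ by Lemma \ref{one-dim}. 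Hence $T_{1,1}(u^{-2})$ acts by $\sum_{k=0}^p\sigma_k(\mathbf t)u^{-2k}=f(u)$, identifying this pullback with $\Gamma_f$. Thus $V(\mathbf s)\otimes\Gamma_f$ is lifted from $W^{m+n}$ with $m=\deg f$.

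For the ``only if'' direction, assume $V(\mathbf s)\otimes\Gamma_f$ is lifted from some $W^{m+n}$-module $M$. Because $V(\mathbf s)\otimes\Gamma_f$ is simple by Theorem \ref{mainYangian}, $M$ is a simple $W^{m+n}$-module, and by Theorem \ref{irreducible} $M\cong S(\mathbf t',\lambda)$ with $r+2p+q=m+n$. The computation of the preceding paragraph identifies the pullback of $M$ with $V(\lambda)\otimes\Gamma_g$ for $g(u)=\prod_{i=1}^p(1+t'_iu^{-2})\in\mathbb C[u^{-2}]$ of degree $2p$ in $u^{-1}$. The uniqueness clause of Theorem \ref{mainYangian} then forces $\lambda=\mathbf s$ up to permutation (so $q=n$) and $g=f$; in particular $f\in\mathbb C[u^{-2}]$. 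Since $m=r+2p\geq 2p=\deg f$ with equality iff $r=0$, the smallest $m$ is exactly $\deg f$ and is already realized by the construction above.

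The main technical obstacle will be making the commutative diagram argument fully rigorous: one must track the flip in the bottom horizontal arrow of (\ref{diagram}) and the parity conventions for $\boxtimes$ versus $\otimes$, and verify cleanly that the pullback of the 1-dimensional $W^m$-module $S(\mathbf t,0)$ is $\Gamma_f$ with $f(u)=\prod_i(1+t_iu^{-2})$, combining Lemma \ref{one-dim} with the explicit formula $\varphi_m(T_{1,1}^{(2k)})=u_{2k}(0)$ and using Lemma \ref{one-dim-Y} to multiply the one-dimensional factors. Once this identification is in place, both directions follow immediately from the two classification theorems.
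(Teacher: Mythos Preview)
Your proposal is correct and follows essentially the same approach as the paper, which simply states that the result ``immediately follows from Theorem \ref{irreducible}'' and records in the accompanying remark the key identification $S(t_1,\dots,t_p,\lambda)\simeq V(\lambda)\otimes\Gamma_f$ with $f=\prod_{i=1}^p(1+t_iu^{-2})$. Your write-up supplies the details the paper omits---the explicit use of Lemma \ref{one-dim}, diagram (\ref{diagram}), and the uniqueness clause of Theorem \ref{mainYangian}---and correctly tracks the minimality via $m=r+2p\geq 2p$.
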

    \begin{remark} Note that $m=2p$ is even. Then Theorem \ref{irreducible} and the diagram (\ref{diagram}) imply  $S(t_1,\dots,t_p,\lambda)\simeq V(\lambda)\otimes \Gamma_f$ where
      $$f=\prod_{i=1}^p(1+t_iu^{-2}).$$
      \end{remark}
    \begin{proof}Immediately follows from Theorem \ref{irreducible}.
      \end{proof}

  \section*{Acknowledgments}

\vskip 0.1in
\noindent
This work was supported by a grant from the Simons Foundation (\#354874, Elena Poletaeva) and the NSF grant (DMS-1701532, Vera Serganova).
We would like to thank V. G. Kac and M. L. Nazarov for useful comments.

\vskip 0.1in

\vskip 0.2in
\noindent
{\it Keywords:} Finite $W$-algebra, Queer Lie superalgebra, Yangian

\end{document}